%
%
%
%
%
%
\RequirePackage{fix-cm}
\documentclass[smallextended]{svjour3}       
\smartqed  
\usepackage{graphicx}
%
%
%
%
%

\usepackage{braket,amsfonts}

\usepackage{booktabs}
\usepackage{array}
\usepackage{makecell}

\usepackage{amsmath,amssymb}  

\usepackage{pgfplots}

\usepackage{algorithmic}
\usepackage{algorithm}

\usepackage{graphicx,epstopdf}

\usepackage{amsopn}

\usepackage{xspace}
\usepackage{bold-extra}
\usepackage[most]{tcolorbox}
\usepackage{placeins,diagbox }



\usepackage{wrapfig} 
\usepackage{tikz,pgfplots,mathtools,float}
\usepackage{booktabs,caption}
\usepackage[flushleft]{threeparttable}
\usepackage{multirow}
\usepackage{amssymb,latexsym}  
\usepackage{extarrows}
\usepackage{dsfont} 
\usepackage{amssymb}  
\usepackage{color}
\usepackage{xcolor}
\usepackage{cite}

\colorlet{texcscolor}{blue!50!black}
\colorlet{texemcolor}{red!70!black}
\colorlet{texpreamble}{red!70!black}
\colorlet{codebackground}{black!25!white!25}

\def\argmin{\mathop{\rm argmin}}

\newcommand\bs{\symbol{'134}} 

\lstdefinestyle{siamlatex}{%
  style=tcblatex,
  texcsstyle=*\color{texcscolor},
  texcsstyle=[2]\color{texemcolor},
  keywordstyle=[2]\color{texemcolor},
  moretexcs={cref,Cref,maketitle,mathcal,text,headers,email,url},
}

\tcbset{%
  colframe=black!75!white!75,
  coltitle=white,
  colback=codebackground, 
  colbacklower=white, 
  fonttitle=\bfseries,
  arc=0pt,outer arc=0pt,
  top=1pt,bottom=1pt,left=1mm,right=1mm,middle=1mm,boxsep=1mm,
  leftrule=0.3mm,rightrule=0.3mm,toprule=0.3mm,bottomrule=0.3mm,
  listing options={style=siamlatex}
}



\DeclareTotalTCBox{\code}{ v O{} }
{ 
  fontupper=\ttfamily\color{black},
  nobeforeafter,
  tcbox raise base,
  colback=codebackground,colframe=white,
  top=0pt,bottom=0pt,left=0mm,right=0mm,
  leftrule=0pt,rightrule=0pt,toprule=0mm,bottomrule=0mm,
  boxsep=0.5mm,
  #2}{#1}

\patchcmd\newpage{\vfil}{}{}{}
\flushbottom


\usepackage{amsmath, amssymb, bbm, xspace}




\newcommand{\ie}{i.e.\@\xspace} 


\newcommand{\abs}[1]{\ensuremath{|#1|}}
\newcommand{\Real}{\ensuremath{\mathbb{R}}}

\newcommand{\dom}{\mathrm{dom}}

\DeclareMathOperator*{\st}{subject\;to}

\def\diag{\mathop{\hbox{\rm diag}}}

\def\sign{\mathop{\hbox{\rm sign}}}
\def\sgn{\mathop{\hbox{\rm sgn}}}

\def\spose#1{\hbox to 0pt{#1\hss}}

\def\text #1{\hbox{\quad#1\quad}}


\def\nthinsp{\mskip -2   mu}



\def\I{_{\scriptscriptstyle I}}



\def\superstar{^{\raise 0.5pt\hbox{$\nthinsp *$}}}
\def\SUPERSTAR{^{\raise 0.5pt\hbox{$*$}}}

\def\lamstarT {\lambda^{\raise 0.5pt\hbox{$\nthinsp *$}T}}



\def\Iscr{{\cal I}}

\def\Lscr{{\cal L}}

\def\Pscr{{\cal P}}

\def\hbar{\skew{4.2}\bar h}

		\def\bkE{{\rm I\kern-.17em E}}
		\def\bkE{\mathbb{E}}
		\def\bk1{{\rm 1\kern-.17em l}}
		\def\bkD{{\rm I\kern-.17em D}}
		\def\bkR{{\rm I\kern-.17em R}}
		\def\bkP{{\rm I\kern-.17em P}}
		\def\bkY{{\bf \kern-.17em Y}}
		\def\bkZ{{\bf \kern-.17em Z}}


		\def\beq{\begin{eqnarray}}
		\def\bc{\begin{center}}
		\def\be{\begin{enumerate}}
		\def\bi{\begin{itemize}}
		\def\bs{\begin{small}}
		\def\bS{\begin{slide}}
		\def\ec{\end{center}}
		\def\ee{\end{enumerate}}
		\def\ei{\end{itemize}}
		\def\es{\end{small}}
		\def\eS{\end{slide}}
		\def\eeq{\end{eqnarray}}
		\def\qed{\quad \vrule height7.5pt width4.17pt depth0pt}

	\def\cp2problem#1#2#3#4{\fbox
		 {\begin{tabular*}{0.9\textwidth}
			{@{}l@{\extracolsep{\fill}}l@{\extracolsep{6pt}}l@{\extracolsep{\fill}}c@{}}
				#1 & & $#4 $ 
			\end{tabular*}}}

\newcommand{\pmat}[1]{\begin{pmatrix} #1 \end{pmatrix}}
		
		\renewcommand{\emph}[1]{\textbf{#1}}

		\def\bk1{{\rm 1\kern-.17em l}}
		\def\bkD{{\rm I\kern-.17em D}}
		\def\bkR{{\rm I\kern-.17em R}}
		\def\bkP{{\rm I\kern-.17em P}}
		
		\def\bkZ{{\bf{Z}}}

\newcommand {\beeq}[1]{\begin{equation}\label{#1}}
\newcommand {\eeeq}{\end{equation}}
\newcommand {\bea}{\begin{eqnarray}}
\newcommand {\eea}{\end{eqnarray}}

\def\texitem#1{\par\smallskip\noindent\hangindent 25pt
               \hbox to 25pt {\hss #1 ~}\ignorespaces}

\def\st{\mbox{subject to}}


\renewcommand{\emph}[1]{\textbf{#1}}
	\def\bkE{{\rm I\kern-.17em E}}
	\def\bk1{{\rm 1\kern-.17em l}}
	\def\bkD{{\rm I\kern-.17em D}}
	\def\bkR{{\rm I\kern-.17em R}}
	\def\bkP{{\rm I\kern-.17em P}}
	
	\def\bkZ{{\bf{Z}}}
	\def\b12{(\beta_1,\beta_2)}

		\newcommand{\yx}[1]{{\color{black}#1}}
	
	\def\us#1{{{\color{black}#1}}}
	
	\newcommand{\uvs}[1]{{\color{black}#1}}
	\newcommand{\vvs}[1]{{\color{black}#1}}
	
	\newcommand{\yyxx}[1]{{\color{black}#1}}
    \newcommand{\yxI}[1]{{\color{black}#1}}
    \newcommand{\yxII}[1]{{\color{black}#1}}
    \newcommand{\yxIII}[1]{{\color{black}#1}}
    
    \newcommand{\diff}[1]{{\color{black}#1}}

	\newtheorem{Example}{Example}

	\def\sN{\mathcal{N}}
	\def\sZ{\mathcal{Z}}
	\def\sU{\mathcal{U}}

	\def\bR{\mathbb{R}}
	\def\bI{\mathbf{1}}
	\def\argmin{\mathrm{argmin}}
\newcommand{\scrL}{\mathcal{L}}
\newcommand{\scrN}{\mathcal{N}}
\newcommand{\scrH}{\mathcal{H}}
\newcommand{\tscrL}{\tilde{\mathcal{L}}}
\newcommand{\lb}{\bar{L}}

\newtheorem{assumption}{Assumption}
\allowdisplaybreaks

\begin{document}

\title{Tractable ADMM Schemes for Computing KKT Points and Local Minimizers for  $\ell_0$-Minimization Problems
}


\author{Yue Xie        \and
        Uday V. Shanbhag 
}


\institute{Y. Xie \at
              Penn State University \\
              \email{yux111@psu.edu}           
           \and
           U. V. Shanbhag \at
              Penn State University \\
              \email{udaybag@psu.edu}   
}

\date{Received: date / Accepted: date}

\maketitle

\begin{abstract}
We consider an $\ell_0$-minimization problem where $f(x) + \gamma \|x\|_0$ is minimized over a polyhedral set and the $\ell_0$-norm regularizer implicitly emphasizes sparsity of the solution. Such a setting captures a range of problems in image processing and statistical learning. Given the nonconvex and discontinuous nature of this norm, convex regularizers are often employed as substitutes. Therefore, far less is known about directly solving the $\ell_0$-minimization problem. Inspired by~\cite{feng2018complementarity}, we consider resolving an equivalent formulation of the $\ell_0$-minimization problem as a mathematical program with complementarity constraints (MPCC) and make the following contributions towards the characterization and computation of its KKT points: (i) First, we show that feasible points of this formulation satisfy the relatively weak Guignard constraint qualification. 
Furthermore, under the suitable convexity assumption on $f(x)$, an equivalence is derived between first-order KKT points and local minimizers of the MPCC
formulation. (ii) Next, we apply two alternating direction method of multiplier (ADMM) algorithms to exploit special structure of the MPCC formulation: (ADMM$_{\rm cf}^{\mu, \alpha, \rho}$) and (ADMM$_{\rm cf}$). These two ADMM schemes both have tractable subproblems. Specifically, in spite of the overall nonconvexity, we show that the first of the ADMM updates can be effectively reduced to a closed-form expression by recognizing a hidden convexity property while the second necessitates solving a convex program. In (ADMM$_{\rm cf}^{\mu, \alpha, \rho}$), we prove subsequential convergence to a perturbed KKT point under mild assumptions. Our preliminary numerical experiments suggest that the tractable ADMM schemes are more scalable than their standard counterpart and ADMM$_{\rm cf}$ compares well with its competitors to solve the $\ell_0$-minimization problem.

\keywords{Nonconvex sparse recovery \and Constraint qualifications and KKT conditions \and Alternating direction method of multiplier (ADMM) \and Tractability \and Convergence analysis}
\end{abstract}
		\section{Introduction} \label{sec:intro}
In this paper, we consider the $\ell_0$-minimization 
problem:
	\begin{align} \label{L0min} 
	 \min_x \  f(x) + \gamma \| x \|_0 \quad \st \ Ax  \geq  b,
	\end{align}
where $x \in \Real^n$, $A \in \Real^{m \times n}$, $b \in \Real^m$,
and $\gamma > 0$. {\vvs{Suppose} $f(x) {\ \triangleq \ }
f_Q(x) + g(x) $, where $f_Q: \Real^n \to \Real$ is a {quadratic} function
and $g: \Real^n \to \Real$ is a smooth convex function.} The $\ell_0$-norm of a vector captures the  number of nonzero entries
while {an $\ell_0$-norm regularizer} implicitly emphasizes the sparsity of the
resulting minimizer. {{$\ell_0$-minimization problems of the form}
\eqref{L0min} \us{assume relevance in}  applications in image
processing and \us{statistical learning}
(cf.~\cite{donoho2006compressed,candes2008introduction,tibshirani1996regression}).}
The nonconvexity and discontinuity of the $\ell_0$-norm has prompted the
usage of convex $\ell_1$ or $\ell_2$-norm regularizers or other tractable
variants~\cite{tibshirani1996regression,bach2012optimization}. While relatively less is known about directly solving problem \eqref{L0min}, a solution of \eqref{L0min} may have better statistical property. In fact, global
solutions of \eqref{L0min} achieve model selection consistency and are known to be sparse under weaker conditions than when utilizing the $\ell_1$-norm
(cf.~\cite{10.2307/41714785}). Therefore, despite the computational challenges in addressing the  $\ell_0$-norm penalty, resolution of the
$\ell_0$-minimization problem is still desirable. In this work, we focus on
direct resolution of \eqref{L0min}. 

{\bf Related work.} To solve \eqref{L0min}, Feng, Mitchell, Pang, Shen, and
W\"{a}chter~\cite{feng2018complementarity} introduced two complementarity-based
formulations equivalent with \eqref{L0min} and processed them by standard
nonlinear programming solvers. Blumensath and Davis proposed an iterative
hard-thresholding (IHT) algorithm, applicable when $f(x)$ is a least-squares
metric and the constraint $Ax \geq b$ is absent~\cite{Blumensath2008}.
Convergence to a local minimizer may be claimed and performance of the scheme
can be improved if warm-started from a point computed by matching pursuit.
	 
A problem class closely related to \eqref{L0min} is the
$\ell_0$-constrained problem \eqref{Msparse}. Although they are not equivalent
due to nonconvexity of $\ell_0$-norm, solution method of \eqref{Msparse} may
inspire efficient algorithms to tackle \eqref{L0min}.
\begin{align}\label{Msparse}
	\min_x  \  f(x) \quad \st \  Ax  \geq b,  \| x \|_0  \leq M.
	\end{align}
This problem {finds application} in best subset
regression~\cite{bertsimas2009algorithm,bertsimas2016best}, cardinality
constrained portfolio
optimization~\cite{bertsimas2009algorithm}, and graphical model
estimation~\cite{Fang2019}.
To solve~\eqref{Msparse}, combining first-order methods and mixed-integer optimization~\cite{bertsimas2016best} was seen to be promising. By considering an equivalent complementarity
formulation of \eqref{Msparse}, Burdakov et al.~\cite{burdakov2016mathematical}
developed a regularization scheme. Moreover, a relatively weak constraint
qualification was shown to hold at every feasible point of this reformulation
and consequently KKT conditions are necessary at local minima.  

In addition to $\ell_0$-norm penalization, related work has  examined the usage of the $\ell_p$-norm ($p \in (0,1)$)~\cite{ge2011note,fung2011equivalence}, 
the smoothly clipped absolute deviation (SCAD)
penalty~\cite{fan2001variable,liu2016global},  \us{the minimax concave penalty
(MCP)~\cite{zhang10nearly}, and the capped-$\ell_1$
penalty~\cite{zhang10analysis}.} 
More recently, a generalization of the $\ell_0$-norm constraint was considered
in the form of an {\em affine sparsity constraint~\cite{dong17structural}}.

{\bf Nonconvex ADMM schemes.}  Since our focus lies in developing an ADMM framework to exploit the structure of an equivalent nonconvex formulation of \eqref{L0min}, we provide a brief review of the available convergence statements in the context of ADMM schemes for nonconvex programs.

Encouraged by the success of ADMM on convex problems, researchers
have tried to implement and analyze ADMM on nonconvex problems. Table~\ref{tab:
nonconADMM} lists some of the main theoretical findings regarding variants of ADMM schemes employed to address different types of nonconvex problems
\cite{doi:10.1137/18M1190689,DBLP:journals/siamjo/HongLR16,DBLP:journals/corr/JiangLMZ16,DBLP:journals/corr/WangYZ15}.
In the second column, we include the assumptions necessary for these authors to
prove convergence or derive complexity bounds. Note that these assumptions pertain to the problem itself but may not be sufficient to guarantee the final result. More assumptions on the parameter settings or iterates of the
algorithms may well be needed. Moreover, for some of the findings, it is shown that if the K{\L} property (\diff{See Definition~\ref{KL} in Appendix}) is assumed, convergence can be guaranteed
\cite{doi:10.1137/18M1190689,DBLP:journals/corr/WangYZ15}. Also note that all of the papers in Table~\ref{tab: nonconADMM} assume global resolution of each
subproblem of ADMM, even when the subproblem is nonconvex. Specifically, in
\cite{DBLP:journals/corr/WangYZ15}, it is explained that the proposed ADMM
scheme can address MPCC but requires globally resolving an MPCC at each step; this is in sharp contrast with the tractable structure of each update in our scheme in
this paper (in other words, we do not require resolving a nonconvex problem globally at each step).

\begin{table}
\scriptsize
  \caption{Main results on convergence of nonconvex ADMM}
  \label{tab: nonconADMM}
  \centering
  \begin{tabular}{llcl}
  \toprule
  Problem & \makecell{Necessary Assumptions} & Result & Lit. \\
  \hline 
  $
  \begin{array}{rl}
  \min & \phi(x_0, x_1, \hdots, x_p, y) \\
  \mbox{s.t.} & \sum_{i=0}^p A_i x_i + By = 0.
  \end{array} $
   & \makecell{$ \nabla_y \phi( \cdot, y) $ is Lipschitz \\
   continuous in $y$. \\
   ${\rm Im}([A_1, \hdots, A_p]) \subseteq {\rm Im}(B)$.}
  &  \makecell{Subsequential \\ convergence to \\ stationary points. } & \cite{DBLP:journals/corr/WangYZ15}  \\
  \hline
 $
  \begin{array}{rl}
\min & f(x_1, x_2, \hdots, x_N) \\
& \quad \quad \quad + \sum_{i=1}^{N-1} r_i(x_i) \\
\mbox{s.t.} & \sum_{i=1}^{N} A_i x_i = b, x_i \in \mathcal{X}_i, \\
& \forall i = 1, \hdots, N-1.
\end{array}
 $
  & \makecell{$f$ is differentiable. \\ 
For any $i$, $\mathcal{X}_i$ is convex. \\
$A_N$ has full row rank.
  } & \makecell{Iter. complexity \\ of $\mathcal{O}(1/\epsilon^2)$ to \\ obtain an  \\ $\epsilon$-stationary point.} & \cite{DBLP:journals/corr/JiangLMZ16} \\
\hline
$
\begin{array}{rl}
\min & \sum_{k=1}^K g_k(x_k) + h(x_0) \\
\mbox{s.t.} & x_k = x_0, \forall k = 1, \hdots, K,\\
& x_0 \in \mathcal{X}.
\end{array}
$
& \makecell{ For any $k = 1, \hdots, K$, \\
$ \nabla g_k(x)$ is Lipschitz continuous. \\ $h(\cdot)$ is convex. \\ 
$\mathcal{X}$ is convex and compact. } & \makecell{Subsequential \\ convergence to \\ stationary points}. & \cite{DBLP:journals/siamjo/HongLR16} \\
\hline
$
\begin{array}{rl}
\min & \sum_{k=1}^K g_k(x_k) + \ell(x_0) \\
\mbox{s.t.} & \sum_{k=1}^K A_k x_k = x_0, \\
& x_k \in \mathcal{X}_k,\forall k = 1, \hdots, K.
\end{array}
$
& \makecell{ $g_k(\cdot)$ is either convex \\ or Lipschitz continuously \\ differentiable. \\  $\nabla l(x)$ is Lipschitz continuous. \\ $\mathcal{X}_k$ is convex and compact. \\ $A_k$ has full column rank. } & \makecell{Subsequential \\ convergence to \\ stationary points}. & \cite{DBLP:journals/siamjo/HongLR16} \\
\hline
$
\begin{array}{rl}
\min & F(z) + G(y) + H(x, y) \\
\mbox{s.t.} & Ax - z = 0.
\end{array}
$
& \makecell{$F$ and $G$ are proper \\ lower semicontinuous. \\ $\nabla H$ is Lipschitz continuous. \\ $A$ is surjective. } & \makecell{Subsequential \\ convergence to \\ KKT points. } & \cite{doi:10.1137/18M1190689} \\
  \bottomrule
\end{tabular}
\end{table}

There have also been extensions of nonconvex ADMM schemes to the linearized
regime~\cite{DBLP:journals/corr/LiuSG17}, nonlinear equality-constrained
settings~\cite{DBLP:journals/corr/WangZ17h}, amongst
others~\cite{DBLP:journals/siamis/YangPC17, 2017arXiv170201850G,
2015arXiv150503063W, DBLP:journals/corr/0002DFSG16}. Despite all of these theoretical achievements on nonconvex ADMM, we point out that no scheme
introduced above can guarantee the convergence or even the boundedness of the
iterates when applying ADMM or its variants to the following formulation with
both blocks constrained and one being nonconvex:
\begin{align} \label{formofinterest}
\begin{aligned}
\min &  \quad F(x) + G(y) \\
\st &\quad x - y = 0, \\
& \quad x \in X \subsetneqq \bR^N, y \in Y \subsetneqq \bR^N,
\end{aligned}
\end{align}
where $F$ is a quadratic function, $G$ is smooth and convex, $X$ is a
nonconvex set defined by a quadratic equality constraint, while $Y$ is a convex set.
Formulation \eqref{formofinterest} is our focus in this paper because by
reformulating the problem of interest in this way and applying ADMM-type schemes, each subproblem will be tractable and may possibly allow for a closed-form
solution. Jiang et al.\cite{DBLP:journals/corr/JiangLMZ16} discussed how 
ADMM schemes may be applied to \eqref{formofinterest} to allow for deriving convergence guarantees. Yet it requires changing the formulation of
\eqref{formofinterest} through the addition of an unconstrained
auxiliary block and requires penalizing the auxiliary variable in the
objective function.


{\bf Motivation and contributions.} Despite the breadth of prior
research, less is known regarding the nature of solutions and  tractable convergent schemes for continuous reformulations
of~\eqref{L0min}. Motivated by this gap and inspired by \cite{feng2018complementarity}, we consider an equivalent MPCC reformulation of \eqref{L0min}:
\begin{equation}\label{mpcc}
	\begin{array}{rl}
	\displaystyle \min_{{x^{\pm},\xi}} & \quad {f(x^+ - x^-)}  + \gamma e^T(e - \xi)  \\
	\st  & \quad {A(x^+ - x^-)}  \geq  b, \quad 
	 (x^+ + x^-)^T \xi  = 0,\\
	& \quad x^+, x^-  \geq 0, \quad
	 0 \leq \xi_i   \leq  1,  \mbox{ for } i = 1,\hdots,n.
	\end{array}
	\end{equation} 
In particular, we focus on characterizing stationary points of \eqref{mpcc} as well as developing {\bf tractable} convergent scheme that may recover such solutions.

\noindent {\bf (i) Regularity properties and characterization of KKT points.}
In Section~\ref{sec:MPCC}, we show that a feasible point of the MPCC
reformulation satisfies the Guignard constraint qualification (GCQ). Under
convexity of $f$, we derive an equivalence between first-order KKT points and
local minimizers.

		\noindent {\bf (ii) ADMM schemes with tractable subproblems.} In Sections~\ref{sec: ADMM} and \ref{sec: perturbADMM}, we propose two ADMM schemes to exploit the special structure of the MPCC: (ADMM$_{\rm cf}^{\mu, \alpha, \rho}$) and (ADMM$_{\rm cf}$). In particular, we reformulate the MPCC \eqref{mpcc} in the form of
\eqref{formofinterest} and apply the ADMM frameworks. The algorithms require resolving two subproblems at each iteration where, one is convex and the other, while nonconvex, is shown to possess a hidden convexity property~\cite{Ben-Tal1996}, and allow for closed-form solutions. In the perturbed proximal ADMM scheme (ADMM$_{\rm cf}^{\mu, \alpha, \rho}$), the perturbation technique (inspired by Hajinezhad and Hong
\cite{Hajinezhad2019}) allows us to show subsequential convergence. We also show that a limit point of this scheme is a perturbed KKT point where the inexactness depends on the choice of the perturbation parameters of the algorithm.


	\noindent {\bf (iii) Numerics.} In Section~\ref{sec:num}, we present
some preliminary numerical experiments showing that the tractable ADMM schemes are more scalable than their standard counterpart and ADMM$_{\rm cf}$ competes well with other solution methods for a special case of the $\ell_0$-minimization problem.

	\textbf{Notation.} We let $e$ denote $(1; \hdots;1)$ for an appropriate dimension. 
	Given a set $Z$ and
	a vector $z$, $\bk1_Z(z) = 0$ if $z \in Z$ and $\infty$ otherwise.
The requirement $a \perp b$ is equivalent to $a_i b_i = 0$ for $i = 1,
	\hdots, n.$ The matrix $I_n$ denotes the $n-$dimensional identity
		matrix. $[1,n] \triangleq
		\{1,2,\hdots,n\}$. $|S|$ denotes the cardinality of
		set $S$. $(a)_i$ or $[a]_i$ denote the $i$th entry of vector $a$. We may also use $a_i$ to denote $i$th entry of vector $a$, but \uvs{often} $a_i$ \uvs{may have other connotations} such as \uvs{the} $i$th iterate in an algorithm, which will be specified.  \uvs{Let} the support
set of $x$ be defined as $\mbox{supp}(x) \triangleq \{ i \in
\{1,\hdots,n\} \mid x_i \neq 0 \}$. \yxIII{For any vector $ z \in \bR^N$, positive semidefinite matrix $M \in \bR^{N \times N}$, $\| z \|_M^2 \triangleq z^T M z.$} 

\section{Properties of the MPCC reformulation}\label{sec:MPCC}
In Section~\ref{sec:21}, we revisit the MPCC
formulation \eqref{mpcc} and study both its  regularity
properties (Section~\ref{sec:CQ&KKT}) and  the relation between KKT points and local minimizers (Section~\ref{sec:23}). 
\subsection{Complementarity-based reformulations}\label{sec:21}
	In~\cite{feng2018complementarity}, several complementarity-based reformulations of \eqref{L0min} are introduced:\\
{\bf Half-complementarity}
	\begin{equation}\label{Hadamard}
	\begin{array}{rlr}
	\displaystyle \min_{x,\xi} & \quad f(x)  + \gamma e^T(e - \xi) \\
	\st & \quad Ax \geq b, \ x_i \xi_i  =  0, \\
	& \quad 0 \leq \xi_i   \leq  1,  \mbox{ for } i = 1,\hdots,n. &
	\end{array}
	\end{equation}
	{The term ``half-complementarity'' arises from noting that the equality constraint may be recast as \uvs{$ x \perp \xi \geq 0$}.}\\ 
{\bf Full-complementarity}
	\begin{equation}\label{mpcc-unrel}
	\begin{array}{rl}
	\displaystyle \min_{{x,x^{\pm},\xi}} & \quad {f(x)}  + \gamma e^T(e - \xi) \\
	\st & \quad Ax  \geq  b, x^+ - x^-  = x, \\
	 & \quad (x^+)^Tx^-  = 0,\ (x^+ + x^-)^T \xi  = 0, \\
	& \quad x_i^+, x_i^-  \geq 0, \  0 \leq \xi_i   \leq  1,  \mbox{ for } i = 1,\hdots,n. 
	\end{array}
	\end{equation} 
	where $x^+,x^-,\xi \in \Real^n$. \eqref{mpcc-unrel} may be further simplified by relaxing {$(x^+)^Tx^-=0$}, resulting in \eqref{mpcc}. It can be formally shown that \eqref{mpcc} is a tight relaxation of
\eqref{mpcc-unrel} implying that \vvs{a solution of} \eqref{mpcc} is \vvs{a minimizer of }\eqref{mpcc-unrel} (\diff{See Lemma~\ref{Lm:TightRelax} in the Appendix}). 
Since equivalence between
\eqref{mpcc-unrel} and \eqref{L0min} has been established~\cite{feng2018complementarity}, the tightness of relaxation indicates equivalence
between \eqref{mpcc} and \eqref{L0min}. Moreover, the
following result shows that local minimizers of \eqref{L0min} can also be recovered
by local minimizers of \eqref{mpcc}. 
\begin{lemma}\label{EquivForm}
\rm \uvs{Given $\hat x, \hat x^+, \hat x^-, \hat \xi \in \bR^n$ \uvs{such that} $ \hat x = \hat x^+ - \hat x^-$ and  $(\hat x^+; \hat x^-; \hat \xi)$ is a local minimum of \eqref{mpcc}.  
Then $\hat x$ is a local minimum of \eqref{L0min}.}
\end{lemma}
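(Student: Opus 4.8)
The plan is to argue by contradiction from the local optimality of $(\hat x^+; \hat x^-; \hat \xi)$ for \eqref{mpcc}, transferring a hypothetical better point near $\hat x$ for \eqref{L0min} back into a better feasible point near $(\hat x^+; \hat x^-; \hat \xi)$ for \eqref{mpcc}. First I would record the key relation between the two objectives at a feasible point of \eqref{mpcc}: if $(x^+; x^-; \xi)$ is feasible for \eqref{mpcc} and we set $x = x^+ - x^-$, then the complementarity constraint $(x^+ + x^-)^T \xi = 0$ together with $x^+, x^- \geq 0$ and $0 \leq \xi_i \leq 1$ forces $\xi_i = 0$ whenever $x_i \neq 0$; hence $e^T(e - \xi) \geq \|x\|_0$, so the \eqref{mpcc}-objective at $(x^+; x^-; \xi)$ is at least the \eqref{L0min}-objective at $x$. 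Conversely, given any $x$ feasible for \eqref{L0min}, one can construct a canonical lift: take $x^+ = \max(x,0)$, $x^- = \max(-x,0)$ (componentwise), and $\xi_i = 0$ if $x_i \neq 0$ and $\xi_i = 1$ if $x_i = 0$; this triple is feasible for \eqref{mpcc} and its objective equals exactly $f(x) + \gamma\|x\|_0$. This canonical lift is the main technical device.

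Next I would set up the contradiction. Let $\delta > 0$ be such that $(\hat x^+; \hat x^-; \hat \xi)$ minimizes \eqref{mpcc} over the ball of radius $\delta$ around it intersected with the feasible set. Suppose $\hat x$ is not a local minimizer of \eqref{L0min}: then there is a sequence $x^{(k)} \to \hat x$, feasible for \eqref{L0min}, with $f(x^{(k)}) + \gamma\|x^{(k)}\|_0 < f(\hat x) + \gamma\|\hat x\|_0$. The hard part is that the canonical lift of $x^{(k)}$ need not lie within distance $\delta$ of $(\hat x^+; \hat x^-; \hat \xi)$ in the $\xi$-coordinate: if some component $\hat x_i = 0$ while $\hat x^+_i = \hat x^-_i > 0$ (a degenerate representation), or if $x^{(k)}_i$ has a different sign pattern than $\hat x$ on components near zero, the lifted $\xi^{(k)}$ can jump. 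I would handle this in two stages. First, observe that we may assume $\hat x^+_i \hat x^-_i = 0$ for all $i$: by the tightness of relaxation (Lemma 7.2 of the extended version, quoted in the excerpt) one may replace $(\hat x^+; \hat x^-)$ with $(\max(\hat x,0); \max(-\hat x,0))$ without changing $\hat x$ or the objective, and a short argument shows this substitution preserves local optimality since it does not decrease the objective on the feasible set and keeps us in the same feasible region. Second, for the sign-pattern issue: since $\|\hat x\|_0 \leq \liminf_k \|x^{(k)}\|_0$ is false in general, but $\gamma > 0$ and the inequality is strict, for large $k$ we must have $\|x^{(k)}\|_0 \leq \|\hat x\|_0$; combined with $x^{(k)} \to \hat x$, for large $k$ the support of $x^{(k)}$ is contained in the support of $\hat x$, so $\text{supp}(x^{(k)}) \subseteq \text{supp}(\hat x)$ and in fact $\|x^{(k)}\|_0 = \|\hat x\|_0$ eventually is impossible if $f$ is continuous and the objective strictly decreases — so either the support strictly shrinks (and we get the contradiction by a cheap-margin argument on $\gamma$) or it stays equal, in which case the canonical lift $\xi^{(k)}$ equals $\hat \xi$ exactly on $\text{supp}(\hat x)$ and we only need $x^{(k)}$ close to $\hat x$ to make $(x^{(k)+}; x^{(k)-}; \xi^{(k)})$ lie in the $\delta$-ball.

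Finally, in the remaining case (support stable), the canonical lift of $x^{(k)}$ is feasible for \eqref{mpcc}, lies within $\delta$ of $(\hat x^+; \hat x^-; \hat \xi)$ for $k$ large, and has \eqref{mpcc}-objective $f(x^{(k)}) + \gamma\|x^{(k)}\|_0 < f(\hat x) + \gamma\|\hat x\|_0 \leq$ (the \eqref{mpcc}-objective at $(\hat x^+; \hat x^-; \hat \xi)$, using the first relation above and equality in the canonical representation of $\hat x$). This contradicts local optimality of $(\hat x^+; \hat x^-; \hat \xi)$, completing the proof. I expect the bookkeeping around the degenerate representation of $\hat x$ and the discrete jump in the $\|\cdot\|_0$ term — i.e., showing the support cannot strictly shrink while keeping the perturbation arbitrarily small, or handling that case separately via the positivity of $\gamma$ — to be the main obstacle; everything else is the routine lift/project dictionary between \eqref{mpcc} and \eqref{L0min}.
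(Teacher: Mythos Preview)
Your overall strategy---splitting by support and playing continuity of $f$ against the integer jump in $\gamma\|\cdot\|_0$---is the same as the paper's. The decisive difference, and the genuine gap in your proposal, is the lift used to push a competitor $\bar x$ near $\hat x$ back into the feasible set of \eqref{mpcc}. You take the canonical lift $\bar x^{\pm} = \max(\pm \bar x,0)$, which forces you into the ``WLOG $\hat x^+_i \hat x^-_i = 0$'' reduction. That reduction is not justified: Lemma~7.2 (tightness of relaxation) concerns \emph{global} minimizers, and the ``short argument'' you allude to---that replacing $(\hat x^+;\hat x^-;\hat\xi)$ by its canonical counterpart preserves \emph{local} optimality---is neither given nor obvious. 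The canonical triple can be far from the given one in $\bR^{3n}$ (take $\hat x^+_i = \hat x^-_i = c > 0$, so $\hat\xi_i = 0$ by feasibility, whereas canonically $\tilde x^{\pm}_i = 0$ and $\tilde\xi_i = 1$); the $\delta$-ball guarantee at the given point then says nothing about a neighborhood of the canonical one, and your claim that the substitution ``does not decrease the objective on the feasible set'' is in fact backwards (the canonical representation has \emph{lower or equal} \eqref{mpcc}-objective).

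The paper avoids this entirely by using an \emph{anchored} lift that works for arbitrary $(\hat x^+;\hat x^-;\hat\xi)$: when $\mathrm{supp}(\bar x) = \mathrm{supp}(\hat x)$, set $\bar x^+_i = \hat x^+_i + \max\{\bar x_i - \hat x_i,0\}$, $\bar x^-_i = \hat x^-_i - \min\{\bar x_i - \hat x_i,0\}$, and keep $\hat\xi$ unchanged. This gives $\bar x^+ - \bar x^- = \bar x$, preserves feasibility (on indices with $\hat\xi_i > 0$ one has $\hat x^+_i = \hat x^-_i = 0$, hence $\hat x_i = 0$, hence $\bar x_i = 0$, so the lift leaves those coordinates at zero), and satisfies $\|(\bar x^+;\bar x^-;\hat\xi) - (\hat x^+;\hat x^-;\hat\xi)\| = \|\bar x - \hat x\|$, so it lands inside the local-optimality neighborhood with no reduction needed. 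Two smaller points on your support discussion: the inequality $\|\hat x\|_0 \leq \liminf_k \|x^{(k)}\|_0$ you call ``false in general'' is in fact always true (lower semicontinuity of $\|\cdot\|_0$), and the inclusion you obtain from $x^{(k)}\to\hat x$ is $\mathrm{supp}(x^{(k)}) \supseteq \mathrm{supp}(\hat x)$, not $\subseteq$; combined with your $\gamma$-margin argument ruling out $\|x^{(k)}\|_0 > \|\hat x\|_0$ for large $k$, these yield equality of supports, which is precisely the case both you and the paper need for the lift step.
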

\begin{proof}
Suppose $\sZ$ \uvs{denotes} the feasible region of \eqref{mpcc}. Since $\hat z
\triangleq (\hat x^+; \hat x^-; \hat \xi)$ is a local minimum of
\eqref{mpcc}, $\hat z \in \sZ$ and there exists an open neighbourhood $\sN
\triangleq B(\hat z, r) \triangleq \{ z \in \bR^{3n} \mid \| z - \hat z \| < r \}$ \uvs{such that for all} $(x^+; x^-; \xi) \in \sN
\cap \sZ$, $ f(x^+ - x^-) + \gamma e^T(e - \xi) \geq f( \hat x^+ - \hat
x^-) + \gamma e^T(e - \hat \xi)$. \uvs{Let} $X \triangleq \{ x \mid Ax \geq b
\}$. It suffices to show that \vvs{(a)} $\hat x \in X$  and \vvs{(b)} there exists an
open neighbourhood $\sU \ni \hat x$ \uvs{such that} for all $x \in \sU \cap
X$, $f(x) + \gamma \| x \|_0 \geq f(\hat x) + \gamma \| \hat x \|_0$. \uvs{Of these, \vvs{(a)} holds immediately by noting that $A \hat x = A(\hat x^+-\hat x^-) \geq b$ where the inequality follows from the feasibility of $(\hat x^+; \hat x^-; \hat \xi)$ with respect to \eqref{mpcc}.} \uvs{Suppose $\sU$ is defined as a sufficiently small set such that the following hold:} (i) For all $x \in \sU$,
$f(x) \geq f(\hat x) - \gamma$, \uvs{a consequence of the continuity of $f$};(ii) For all $x \in
\sU \uvs{ \ \cap X}$, $\hat x_i \neq 0 \Rightarrow x_i \neq 0$, $\forall i = 1,\hdots,n$;
(iii) $\sU \subseteq B(\hat x, r)$. Then (ii) implies $\mbox{supp}(x)
\supseteq \mbox{supp}(\hat x)$ for all $x \in \sU \uvs{ \ \cap X \ }$ (\uvs{or $\|\hat{x}\|_0 \leq \|x\|_0$}). \yxII{Therefore, the local optimality of $\hat x$ can be shown through the following two cases:} (I). If $\bar x \in \{ x \in \sU \cap
X \mid \mbox{supp}(x) \supsetneq \mbox{supp}(\hat x) \}$, then $\| \bar x \|_0 \geq \|
\hat x \|_0 + 1$ implying that $f(\bar x) + \gamma \| \bar x \|_0
\overset{(i)}{\geq} f(\hat x) - \gamma + \gamma (\| \hat x \|_0 + 1) =
f(\hat x) + \gamma \| \hat x \|_0$; (II). If $\bar x \in \{ x \in \sU
\cap X \mid \mbox{supp}(x) = \mbox{supp}(\hat x) \}$, \uvs{then} let $\bar x^+_i \uvs{ \ \triangleq \ } \hat x^+_i +
\max\{\bar x_i - \hat x_i ,0\}$, $\bar x^-_i \uvs{ \ \triangleq \ } \hat x^-_i - \min \{\bar x_i
- \hat x_i ,0\}$ for $i = 1,\hdots,n$. Then we see that
$\bar x = \bar x^+ - \bar x^-$ and $(\bar x^+; \bar x^-; \hat \xi) \in \sN
\cap \sZ$. 
Therefore, $f(\bar x^+ - \bar x^-) + \gamma e^T(e - \hat
\xi) \geq f(\hat x^+ - \hat x^-) + \gamma e^T(e - \hat \xi)$ implying
$f(\bar x^+ - \bar x^-) \geq f(\hat x^+ - \hat x^-)$ or $f(\bar x)
\geq f(\hat x)$.  \uvs{It follows that} $f(\bar x) + \gamma \| \bar x \|_0 \geq f(\hat
x) + \gamma \| \hat x \|_0$.  \qed \end{proof}

While \eqref{L0min} can now be reformulated as a continuous problem, \eqref{mpcc} is still an MPCC. 
It may be recalled that MPCCs are ill-posed nonconvex nonlinear programs in that 
standard regularity
conditions (such as LICQ or MFCQ) may fail to hold at any feasible
point~\cite{luo96mathematical}. Moreover, global resolution of such
problems is generally challenging. 
We now discuss what constraint qualifications do hold at a feasible point of \eqref{mpcc}.

\subsection{Constraint Qualifications} \label{sec:CQ&KKT}
In this subsection, we analyze whether regularity conditions hold at feasible points for the simplified full complementarity formulation \eqref{mpcc}. This allows for stating necessary conditions of optimality. 
Recall that some common CQs are related as follows.
\begin{align}\label{CQs}
 \textbf{(I)} \ \mbox{LICQ} \Rightarrow \mbox{CRCQ} \quad \mbox{ and } \quad 
 \textbf{(II)} \ \mbox{LICQ} \Rightarrow \mbox{MFCQ} \Rightarrow \mbox{ACQ} \Rightarrow \mbox{GCQ}
\end{align}
The first relation is obvious from the definition of LICQ and CRCQ (See \cite[Page 262]{facchinei2007finite}) while the proof of the second relation may be found in \cite{Burke2012note}. In the context of the half-complementarity formulation \eqref{Hadamard}, the constant rank constraint qualification (CRCQ)  is proven to hold at points satisfying certain nondegeneracy property while LICQ may fail~\cite{feng2018complementarity}. In this section, we focus on the simplified full-complementarity formulation~\eqref{mpcc}. It can be shown that GCQ may hold at every feasible point while ACQ may fail.

We begin our discussion with some definitions.
Suppose $g: \Real^n \rightarrow
	\Real^p$ and $h: \Real^n \rightarrow \Real^q$ are continuously differentiable functions while $\Omega$ is a set defined as follows.
	\begin{align}\label{Omega}
	\Omega \triangleq \left\{ x \in \Real^n : g(x) \leq 0 , h(x) = 0 \right\}.
	\end{align}
Then the tangent cone
	$T_{\Omega}(x^*) $ and {linearized} cone $L_{\Omega}(x^*)$ of
	$\Omega$ at $x^*$ and {the ACQ and the GCQ are} defined as follows:
	\begin{definition}[{\bf Abadie and Guignard CQ
		(ACQ, GCQ)}] \rm
If $\mathcal{I}(x^*) = \{i:g_i(x^*) = 0\}$, then
	$T_{\Omega}(x^*) $ and $L_{\Omega}(x^*)$ of
	$\Omega$ at $x^*$ are defined as follows:
	\begin{align} \label{def-Tomega}
	T_{\Omega}(x^*) & \triangleq \left\{d : \exists \{x_k\} \subseteq
	\Omega, \{t_k\} \downarrow 0, \mbox{ s.t. } x_k \rightarrow
	x^* \mbox{and } d = \lim_{k \rightarrow \infty} \frac{x_k-x^*}{t_k}
	\right\} \\
\label{def-Lomega}
	L_{\Omega}(x^*) & \triangleq \left\{d: \nabla g_i(x^*)^T d \leq 0, 
	\forall i \in \mathcal{I}(x^*), \nabla h^T_j(x^*)d = 0, j = 1,\hdots,q
		\right\}.
	\end{align}
	Then $x^*$ satisfies the \textbf{Abadie Constraint Qualification} (ACQ) iff $T_{\Omega}(x^*) = L_{\Omega}(x^*)$. {Further,
	$x^*$ satisfies the \textbf{Guignard Constraint Qualification} (GCQ) 
	iff $(T_\Omega(x^*))^*$ $ = (L_\Omega(x^*))^*,$ where for a cone $C
		\subseteq \Real^n$, $C^* \triangleq \{v: d^Tv \leq 0, \ \forall
		d \in C\}.$}
	\end{definition} 

Next, we prove that the GCQ holds at every feasible point of \eqref{mpcc}.
\begin{lemma}[\us{\bf GCQ holds at feasible points}] \label{GCQ} \rm
	Consider the problem \eqref{mpcc} and consider a 
	feasible point $x = (x^+;x^-;\xi)$. Then the GCQ holds at this point.
\end{lemma}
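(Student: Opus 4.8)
The plan is to verify the Guignard condition in its dual form, $\bigl(T_\Omega(x)\bigr)^* = \bigl(L_\Omega(x)\bigr)^*$, where $\Omega$ denotes the feasible region of \eqref{mpcc} written with the coordinatewise complementarity constraints $h_i(x) \triangleq (x_i^+ + x_i^-)\xi_i = 0$, $i \in [1,n]$, together with the redundant bound $x^+ + x^- \ge 0$. Since $T_\Omega(x) \subseteq L_\Omega(x)$ always holds, the inclusion $\bigl(L_\Omega(x)\bigr)^* \subseteq \bigl(T_\Omega(x)\bigr)^*$ comes for free, so the whole content is the reverse inclusion, equivalently $L_\Omega(x) \subseteq \bigl(T_\Omega(x)\bigr)^{**}$. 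I would obtain this through the chain $L_\Omega(x) = C_2(x) \subseteq \bigl(C_1(x)\bigr)^{**} \subseteq \bigl(T_\Omega(x)\bigr)^{**}$, for which it suffices to prove three facts: (a) $L_\Omega(x) = C_2(x)$; (b) $C_1(x) \subseteq T_\Omega(x)$; (c) $C_2(x) \subseteq \bigl(C_1(x)\bigr)^{**}$, the closed convex conic hull of $C_1(x)$. Indeed, dualizing (c) gives $\bigl(C_1(x)\bigr)^* = \bigl(C_1(x)\bigr)^{***} \subseteq \bigl(C_2(x)\bigr)^*$, which together with (a) and (b) closes the argument.

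For (a), I would read off \eqref{def-Lomega} from the active constraints at $x$: the rows $j \in E(x)$, the $n$ equalities $\nabla h_i(x)^T d = \xi_i\bigl[(d_1)_i + (d_2)_i\bigr] + (x_i^+ + x_i^-)(d_3)_i = 0$, the active bounds $x_i^\pm \ge 0$, and the active bounds $\xi_i \ge 0$ (on $S_0(x) \cup S(x)^c$) and $\xi_i \le 1$ (on $S_1(x)$). The only point needing care is that for $i \in S(x)\setminus S_0(x)$ the equality $\nabla h_i(x)^Td=0$ reads $\xi_i[(d_1)_i+(d_2)_i]=0$ with $\xi_i>0$ and $(d_1)_i,(d_2)_i\ge 0$ active, forcing $(d_1)_i=(d_2)_i=0$, while for $i \in S(x)^c$ it forces $(d_3)_i = 0$; the resulting system is exactly $C_2(x)$.

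For (b), observe that $\Omega = \bigcup_{J \subseteq [1,n]} P_J$, where $P_J$ is the polyhedron obtained by adjoining $\xi_i = 0$ ($i \in J$) and $x_i^+ = x_i^- = 0$ ($i \notin J$) to the linear constraints $A(x^+ - x^-) \ge b$, $x^\pm \ge 0$, $0 \le \xi \le 1$; this just enumerates which branch of each complementarity is taken. For a polyhedron the Abadie CQ holds, so $T_{P_J}(x) = L_{P_J}(x)$, and $T_{P_J}(x) \subseteq T_\Omega(x)$ whenever $x \in P_J$. The indices $J$ with $x \in P_J$ are precisely $J = S(x)^c \cup J'$ with $J' \subseteq S_0(x)$, and one checks that $L_{P_J}(x)$ agrees with $C_2(x)$ off $S_0(x)$, while on $S_0(x)$ it imposes "$(d_3)_i = 0,\ (d_1)_i,(d_2)_i \ge 0$" for $i \in J'$ and "$(d_1)_i = (d_2)_i = 0,\ (d_3)_i \ge 0$" for $i \notin J'$. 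Hence, given any $d \in C_1(x)$, choosing $J' \triangleq \{ i \in S_0(x) : (d_3)_i = 0 \}$ and using that on $S_0(x)$ the relation $[(d_1)_i + (d_2)_i](d_3)_i = 0$ with all three components nonnegative forces one of the two factors to vanish, I get $d \in L_{P_J}(x) = T_{P_J}(x) \subseteq T_\Omega(x)$, i.e. $C_1(x) \subseteq T_\Omega(x)$.

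For (c), given $d = (d_1;d_2;d_3) \in C_2(x)$, write $d = d' + d''$, where $d'$ coincides with $d$ except that $(d_3')_i = 0$ for $i \in S_0(x)$, and $d'' = (0;0;d_3'')$ with $(d_3'')_i = (d_3)_i$ for $i \in S_0(x)$ and $0$ otherwise. Using $(d_1)_i,(d_2)_i,(d_3)_i \ge 0$ on $S_0(x)$ (valid since $d \in C_2(x)$), both $d'$ and $d''$ satisfy every defining inequality of $C_2(x)$, and both satisfy the extra complementarity constraint on $S_0(x)$, because in $d'$ the factor $(d_3)_i$ has been zeroed and in $d''$ the factors $(d_1)_i,(d_2)_i$ have been zeroed; hence $d',d'' \in C_1(x)$. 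Since $C_1(x)$ is a cone, $\tfrac12 d',\tfrac12 d'' \in C_1(x)$, so $\tfrac12 d \in \mathrm{conv}(C_1(x)) \subseteq \bigl(C_1(x)\bigr)^{**}$, and therefore $d \in \bigl(C_1(x)\bigr)^{**}$.

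I expect the main obstacle to be the index-set bookkeeping underlying (a) and (b): correctly tracking which bound and complementarity constraints are active in terms of $E(x), S(x), S_0(x), S_1(x)$, and verifying that the union of the polyhedral cones $L_{P_J}(x)$ over the admissible branches collapses to exactly the nonconvex complementarity-type description $C_1(x)$. Step (a) and the inclusion side of (b) are otherwise routine linear algebra, and (c) rests on the single elementary fact that $\mathrm{conv}\{(a,b,c) \in \Real^3_+ : (a+b)c = 0\} = \Real^3_+$.
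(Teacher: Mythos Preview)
Your proof is correct and follows the same architecture as the paper's: identify $L_\Omega(x)$ with the polyhedral cone $C_2(x)$, relate $T_\Omega(x)$ to the complementarity cone $C_1(x)$, and show $C_2(x)$ lies in the closed convex hull of $C_1(x)$. The paper in fact proves the full equality $T_\Omega(x) = C_1(x)$ by exhibiting an explicit feasible curve $x + d/(k\lambda)$; you correctly observe that only the inclusion $C_1(x) \subseteq T_\Omega(x)$ is needed, and you obtain it by a genuinely different device, decomposing $\Omega$ into polyhedral branches $P_J$ and invoking $T_{P_J}(x) = L_{P_J}(x)$. This is more structural and sidesteps the ``$\lambda$ large enough'' step. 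Your splitting in (c) is also slightly different from the paper's (they zero out all of $d_3$ rather than only the $S_0(x)$ components), but both work.

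One caveat worth recording: you rewrite \eqref{mpcc} with the coordinatewise equalities $h_i(x) = (x_i^+ + x_i^-)\xi_i = 0$ instead of the single aggregate constraint $(x^+ + x^-)^T\xi = 0$ that actually appears in \eqref{mpcc}. Since the linearized cone depends on the representation, you are a priori proving GCQ for a different description of the same feasible set. The two linearized cones do coincide here---the aggregate equality $\xi^T(d_1+d_2) + (x^++x^-)^T d_3 = 0$, combined with the active sign constraints, forces each term to vanish separately, exactly as the paper argues---so your argument does transfer. Strictly speaking, though, you should either work with the aggregate constraint from the start or add one line verifying that the aggregate linearized cone also equals $C_2(x)$.
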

\begin{proof}
For the point $x = (x^+;x^-;\xi)$, define
\begin{align} \label{def-A-E}
 & A^T \triangleq  (a_1,\hdots,a_m) \mbox{ and }  {E(x)} \triangleq \{ i
	: a_i^T (x^+ - x^-) = b_i \}, \\
\label{def-Sparse sets}
& \begin{aligned}
& S(x) = \left\{ i
	: x^+_i = x^-_i = 0 \right\}, \\
& S_0(x) = 
 \left\{ i \in S(x)
	: \xi_i = 0 \right\}, S_1(x) =
 \left\{ i \in S(x)
	: \xi_i = 1 \right\}.
\end{aligned}	
\end{align}
In addition, define cones $C_1(x)$ and $C_2(x)$ as
\begin{align}
\notag
{C_2(x)} & \triangleq \left\{ \pmat{d_1\\d_2\\d_3}:
	 \begin{aligned}
	 (d_1)_i  & = 0, (d_2)_i   = 0, \forall i \in S(x) \setminus S_0(x);\\
	(d_1)_i & \geq 0 , \forall i \in S_0(x) \cup (S(x)^c \cap \{i : x^+_i = 0\} ) ; \\
	 (d_2)_i & \geq 0 , \forall i \in S_0(x) \cup (S(x)^c \cap \{i : x^-_i = 0\} ) ; \\
	 (d_3)_i & \geq 0, \forall i \in S_0(x); 
	 (d_3)_i  \leq 0, \forall i \in S_1(x);\\
	 (d_3)_i  & = 0 , \forall i \in S(x)^c; 
	 a_j^T d_1  - a_j^T d_2  \geq 0, \forall j \in {E(x)}
	\end{aligned}
	\right\}, \\
	\label{def-C1} 
C_1(x) & \triangleq C_2(x) \cap \left\{ d = (d_1; d_2; d_3):
     \left[ (d_1)_i + (d_2)_i \right] (d_3)_i = 0, \forall i \in S_0(x) \right\},
\end{align}
respectively where it may be noted that $C_1(x)$ is characterized by an
extra constraint $ \left[ (d_1)_i + (d_2)_i \right] (d_3)_i  = 0$ for
all $i \in S_0(x)$. Further, denote
	\begin{align*}
X & \triangleq \left\{
	(y^+;y^-;\zeta):
	\begin{aligned}
	& y^+,y^-,\zeta \in \Real^n, (y^+ + y^-)^T \zeta = 0, A(y^+ - y^-)
		\geq b, \\ 
	& y^+ \geq 0, y^- \geq 0, 0 \leq \zeta_i \leq 1, \forall i = 1,\hdots,n,
	\end{aligned}
	\right\}.
\end{align*}
 We proceed to show the following.\\
	\noindent {(i). $T_{X}(x) = {C_1(x)}$:}
	Suppose $d \in T_{X}(x)$. Then there exist sequences $\{x_k\}$ and
	$\{t_k\}$ such that  $\{x_k\} \subseteq
	X, x_k \rightarrow x $, $\{t_k\} \downarrow 0$ and $d = \lim_{k \rightarrow \infty}
	\frac{x_k-x}{t_k}$. Denote $x_k 
	\triangleq (x_{(k)}^+;x_{(k)}^-;\xi_{(k)} )$, where $x_{(k)}^+, x_{(k)}^-, \xi_{(k)} \in \bR^n$. Suppose that $d \triangleq (d_1;d_2;d_3), d_1,d_2,d_3 \in
	\Real^n$. Based on feasibility of $x_k, \forall k \geq 1$ and the fact that $x_k \rightarrow x$, we
	may claim the following:
	\begin{align*}
	\forall i & \in S(x) \setminus S_0(x), \exists K_1, \mbox{s.t.},
	\forall k \geq K_1, (x_{(k)}^+)_i = (x_{(k)}^-)_i = 0 \\
&  \implies (d_1)_i = (d_2)_i = 0, \forall i \in S(x) \setminus S_0(x) \\
	\mbox{ if } i & \in S_1(x), \mbox{ then } \xi_i = 1 \mbox{ and } (\xi_{(k)})_i
	\leq 1, \forall k \\
	&  \implies (\xi_{(k)})_i - {\xi_i} \leq 0, \forall k, (d_3)_i \leq 0, \forall i \in S_1(x).
	\end{align*}
	{Similarly we may claim the following:} 
	\begin{align*}
	& \qquad \forall i \in S(x)^c, \exists K_2, \mbox{ s.t.}  \forall k \geq K_2,
	(\xi_{(k)})_i = 0, (x_{(k)}^+)_i \geq 0, (x_{(k)}^-)_i \geq 0 \\
	&\implies  (d_3)_i = 0, \forall i \in S(x)^c; 
 \ (d_1)_i \geq 0 ,
	\forall i \in S(x)^c \cap \{ i: x^+_i = 0\}; \\
	& \mbox{ and } (d_2)_i \geq 0, \forall i \in S(x)^c \cap \{ i: x^-_i = 0\}.
	\end{align*}
	{For indices $i \in S_0(x)$, the following holds:}
	\begin{align*}
	& x^+_i = x^-_i = \xi_i  = 0  \Rightarrow  (x_{(k)}^+)_i - x^+_i \geq 0, (x_{(k)}^-)_i - x^-_i \geq 0, (\xi_{(k)})_i - \xi_i  \geq 0, \forall k,\\
	& \implies (d_1)_i \geq 0, (d_2)_i \geq 0, (d_3)_i \geq 0. \\
& 	\left[ (x_{(k)}^+)_i + (x_{(k)}^-)_i \right] (\xi_{(k)})_i = 0, \forall k \\
&	\implies (x_{(k)}^+)_i + (x_{(k)}^-)_i = 0, \mbox{ or } (\xi_{(k)})_i = 0, \mbox{ inf. often}; \\
&	 \implies (d_1)_i + (d_2)_i = 0, \mbox{ or } (d_3)_i = 0 
 \iff \left[ (d_1)_i + (d_2)_i \right] (d_3)_i = 0 \\
& \mbox{Furthermore,} \\
& \forall j \in {E(x)}, a_j^T x^+ - a_j^T x^- = b_j, a_j^T x_{(k)}^+ -
	a_j^T x_{(k)}^- \geq b_j,  \mbox{ for all } k \ \geq \ 1  \\
&	 \implies a_j^T (x_{(k)}^+ - x^+)  - a_j^T (x_{(k)}^- - x^-) \geq 0,  \forall j \in {E(x)} \mbox{ and }  k \ \geq \ 1 \\
& \implies a_j^T d_1 - a_j^T d_2 \geq 0, \forall j \in {E(x)}. 
	\end{align*}
	Therefore, {we may conclude from \eqref{def-C1} that} $d \in
	{C_1(x)}$
	and $T_{X}(x) \subseteq {C_1(x)}$. 
	
	We now
	proceed to show that ${C_1(x)} \subseteq T_X(x)$.  Choose any $d
	\in {C_1(x)}$. Then based on property of
			${C_1(x)}$, it is easy to see that we may choose $\lambda$ large enough such that $x + d/(k\lambda) \in X,
	\forall k \geq 1$. Let
	$x_k \triangleq x + d/(k\lambda), t_k \triangleq 1/(k\lambda)$ for all $k \geq 1$, implying
	that $\{x_k\} \subseteq X, x_k \rightarrow x, t_k \downarrow 0, d = \lim_{k \rightarrow \infty}
	\frac{x_k-x}{t_k}$ implying that $d  \in  T_{X}(x)$ which further implies ${C_1(x)} \subseteq  T_{X}(x).$\\
	\noindent {(ii). $L_X(x) = C_2(x)$:} The set $X$ \yx{contains the following active constraints.} 
	\begin{align*}
	& -y^+_i \leq 0, \forall i \in S(x) \cup \{i \in S(x)^c: x^+_i = 0 \}
	;\\
	& -y^-_i  \leq 0, \forall i \in S(x) \cup \{i \in S(x)^c: x^-_i = 0 \}; \\
	& -\zeta_i  \leq 0,  \forall i \in S_0(x) \cup S(x)^c; \quad
	\zeta_i   \leq 1,  \forall i \in S_1(x); \\
	  & a_i^T(y^+ - y^-)  \geq b_i, \forall i \in {E(x)}; \quad  
	  (y^+ + y^-)^T\zeta  = 0.
	\end{align*}
	{This allows for defining the linearized cone $L_X(x)$ at $x \in
		X$.} 
	\begin{align}\label{LinearCone-GCQ}
	L_X(x) { \triangleq } \left\{\pmat{d_1 \\ d_2 \\ d_3}: 
	\begin{aligned}
	 & -(d_1)_i   \leq 0, \forall i \in S(x) \cup \{i \in S(x)^c: x^+_i = 0 \};\\
	& -(d_2)_i   \leq 0, \forall i \in S(x) \cup \{i \in S(x)^c: x^-_i = 0 \};\\
	& (d_3)_i  \leq 0, \forall i \in S_1(x); 
	-(d_3)_i \leq 0, \forall i \in S_0(x) \cup S(x)^c;\\
	&a_i^T(d_1 - d_2)  \geq 0, \forall i \in {E(x)};\\ 
	& \xi^T(d_1+d_2) + (x^+ + x^-)^Td_3  = 0. 
	\end{aligned}
	\right\}
	\end{align}
	{Suppose} $d \in L_X(x)$. Then the following holds:
	\begin{align}	\notag & \qquad \xi^T(d_1+d_2) + (x^+ + x^-)^Td_3 = 0 \\
& \iff \sum_{i \in S(x) \setminus S_0(x)} \xi_i[(d_1)_i+(d_2)_i] + \sum_{i \in S(x)^c} (d_3)_i(x^+_i + x^-_i) = 0 \notag \\
	\label{eq1-GCQ}
	& \iff (d_1)_i=(d_2)_i = 0, \forall i \in S(x) \setminus S_0(x); \quad (d_3)_i = 0, \forall i \in S(x)^c,
	\end{align}
	{where the} first equivalence follows from the definition of
	$S_0(x)$ and $S(x)$ while the second follows from noting that $(d_1)_i \geq 0, (d_2)_i \geq 0, \xi_i > 0, \forall i \in S(x) \setminus S_0(x)$ and $(d_3)_i \geq 0, x^+_i + x^-_i > 0, \forall i \in S(x)^c$.
	Therefore, by replacing $\xi^T(d_1+d_2) + (x^+ + x^-)^Td_3 = 0$ with \eqref{eq1-GCQ} in the representation \eqref{LinearCone-GCQ}, we observe that $L_X(x)  = {C_2(x)}$.

\noindent (iii). {We conclude the proof by showing that $ {C_2(x) =
\mbox{cl}(\mbox{conv}(C_1(x)))}$. Since $C_2(x)$ is a polyhedral cone, it is closed and convex.} Furthermore, by definition,
	$C_2(x) \supseteq C_1(x)$, implying that $C_2(x) \supseteq
		\mbox{cl}({\mbox{conv}}(C_1(x)))$. To prove the reverse direction,
	choose any vector $d \triangleq (d_1;d_2;d_3) \in {C_2(x)}$
		where $d_1,d_2,d_3 \in \Real^n$. It is easy to verify that both
		vectors $\tilde{d} \triangleq (\bf{0_{n \times 1}}; \bf{0_{n
				\times 1}}; 2d_3)$ and $\hat{d}  \triangleq (2d_1; 2d_2;
					\bf{0_{n \times 1}})$ are in ${C_1(x)}$. Note
				that $d = \frac{1}{2} \tilde{d} + \frac{1}{2} \hat{d}
				\in {\mbox{cl}}({\mbox{conv}}(C_1(x)))$.
				Therefore, $C_2(x) \subseteq
				\mbox{cl}(\mbox{conv}(C_1(x)))$.
				
 By (iii) $L_X(x) = \mbox{cl}(\mbox{conv}(T_X(x)))$,
 implying that $T_X(x)^* = L_X(x)^*$. \qed
\end{proof}
\begin{remark}
(i) At a feasible point $x = (x^+;x^-;\xi)$ such that $[x^+ + x^-]_i = 0$ and $\xi_i = 0$ for some index $i$, ACQ may fail to hold. In fact, it is very likely that $T_X(x) = C_1(x) \subsetneqq C_2(x) = L_X(x)$. On the other hand, at all other points, $S_0(x) = \emptyset$ and ACQ holds.
(ii) KKT conditions are necessary at local minimum.
\end{remark}
\subsection{KKT conditions and local optimality}\label{sec:23}
In this subsection, we discuss the relation between (first-order) KKT conditions and local optimality. We begin with the definition of KKT conditions.
	\begin{definition}[{\bf KKT conditions}] \label{first order KKT} \rm
Consider the problem $\{ \min_{x \in \Omega}F(x) \}$,\\ where $F(x)$ is a continuously differentiable {function} 
and $\Omega$ is defined in \eqref{Omega}.
Suppose $x^*$ {denotes} a feasible solution of $\Omega$. Then $x^*$ satisfies the first-order KKT conditions if and only if 
there exists $\lambda \in
	 \Real^p_+, \mu \in \Real^q$ such that 
\begin{align}\label{kkt}
\begin{aligned}
\nabla F(x^*) + \sum_{i=1}^p \lambda_i \nabla g_i(x^*) + \sum_{j=1}^q
\mu_j \nabla h_j(x^*) & = 0, \\
\lambda_ig_i(x^*) & = 0, \qquad \forall i = 1,\hdots,p.
\end{aligned}
\end{align}
	\end{definition}

{
Then by \yxII{Definition}~\ref{first order KKT}, a point $x \triangleq (x^+; x^-; \xi)$ satisfies the {first-order KKT conditions of problem \eqref{mpcc} if there exist multipliers {$(\mu,\beta_1,\beta_2,\beta_3, \beta_4, \pi) \in \bR \times \bR^n \times \bR^n \times \bR^n \times \bR^n \times \bR^m$ }
		such that the following conditions hold:}
\begin{subequations} \label{KKT-F}
\begin{align} \label{KKT-F-1}
0  = \pmat{\nabla_x f (x^+ - x^-) \\ -\nabla_x f(x^+ - x^-) \\ -\gamma e} +
\mu \pmat{ \xi \\ \xi \\ x^+ + x^-} + \pmat{ -\beta_1 - A^T \pi \\ -\beta_2 + A^T \pi \\ \beta_4 - \beta_3} , \\
\label{KKT-F-2}
0  \leq \beta_1   \perp x^+ \geq 0, \\
\label{KKT-F-3}
  0  \leq \beta_2  \perp x^- \geq 0, \\
\label{KKT-F-4} 
  0  \leq \beta_3  \perp \xi \geq 0, \\
\label{KKT-F-5}
0  \leq \beta_4  \perp e - \xi  \geq 0,\\
\label{KKT-F-6}
0  \leq \pi \perp  A(x^+ - x^-) - b  \geq 0, \\
\label{KKT-F-7}
(x^+ + x^-)^T \xi  = 0.
\end{align}
\end{subequations}

Before presenting the main result, we point out  a non-degeneracy property of KKT points.
%

\begin{lemma}[{\bf Nondegeneracy of first-order KKT points}]\label{kkt-nondeg}
\rm Consider a point $x = (x^+;x^-;\xi)$ and a set of multipliers $(\mu,\beta_1,
		\beta_2,\beta_3, \beta_4, \pi)$ that satisfy the first-order
		KKT conditions \eqref{KKT-F} of \eqref{mpcc}. Then {$x$ satisfies the nondegeneracy property}:
\begin{align}
\label{nondegpt}
[x^+ + x^-]_i = 0 \Rightarrow \xi_i = 1.
\end{align}
\end{lemma}
\begin{proof}
 Suppose that $(x^+;x^-;\xi)$ verifies KKT
conditions \eqref{KKT-F} with multipliers $\mu, \beta_1, \beta_2,
		   \beta_3, \beta_4, \pi$. Then, by \eqref{KKT-F-1}, {we have that}
$
(x^+ + x^-)_i = 0 \Rightarrow (\beta_4 - \beta_3)_i = \gamma > 0. $
But for a given $i$, for both $[\beta_4]_i$ and $[\beta_3]_i$ to be
positive, we require that both $[\xi]_i = 0$ and $[1-\xi]_i = 0$ hold,
	which is impossible. It follows that the only possibility is that
	$[\beta_4]_i = \gamma$ and $[\beta_3]_i = 0$, implying that $[\xi]_i =
	1$.
It follows that $(x^+;x^-;\xi)$ satisfies the property \eqref{nondegpt}. \qed
\end{proof}


Lemma~\ref{kkt-nondeg} leads to the rather surprising
equivalence between local minimizers and (first-order) KKT points.

{\begin{theorem}[{\bf Equivalence between local minimizers and KKT
	points}] \label{SOC}
\em
Consider problem \eqref{mpcc}, and let $x = (x^+;x^-;\xi)$ denote a
feasible point. Assume that $f$ in \eqref{mpcc} is convex. Then the following statements are equivalent:
\be
\item[(a)] $x$ is a local minimizer of \eqref{mpcc};
\item[(b)] {There exist $\mu \in \Real$, $\beta_1, \beta_2,
	\beta_3,\beta_4 \in \Real^n$, and $\pi \in \Real^m$ such that the
first-order KKT conditions \eqref{KKT-F} hold;}
\ee
\end{theorem}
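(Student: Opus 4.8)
The plan is to prove the three implications $(a)\Rightarrow(b)$, $(b)\Rightarrow(c)$, and $(b)\Rightarrow(a)$, and to observe that $(c)\Rightarrow(b)$ is immediate since by definition the second-order KKT conditions contain the first-order KKT system with the same multipliers; together these close all the equivalences. For $(a)\Rightarrow(b)$ I would simply invoke Lemma~\ref{GCQ}: the objective $F(x^+,x^-,\xi)=f(x^+-x^-)+\gamma e^T(e-\xi)$ and all defining constraints of \eqref{mpcc} are $C^2$ (affine inequalities plus the bilinear equality $(x^++x^-)^T\xi=0$), and since the Guignard constraint qualification holds at every feasible point, a local minimizer must satisfy the first-order KKT system \eqref{KKT-F}.

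For $(b)\Rightarrow(c)$, take $x=(x^+;x^-;\xi)$ satisfying \eqref{KKT-F} with some multipliers. By Lemma~\ref{kkt-nondeg}, $x$ is nondegenerate, so $\xi_i=1$ whenever $(x^++x^-)_i=0$, and the feasibility identity $(x^++x^-)^T\xi=0$ (all terms nonnegative) forces $\xi_i=0$ whenever $(x^++x^-)_i>0$; hence $\xi_i\in\{0,1\}$ for every $i$. Consequently, in the subspace $M(x)$ of \eqref{tangent subspace} the constraint ``$(z_3)_k=0$ for all $k$ with $\xi_k\in\{0,1\}$'' collapses to $z_3=0$. A one-line computation gives, for $d=(z_1;z_2;z_3)$, the identity $d^TH\,d=(z_1-z_2)^TH_f(x^+-x^-)(z_1-z_2)+2\mu\, z_3^T(z_1+z_2)$, so on $M(x)$ it equals $(z_1-z_2)^TH_f(x^+-x^-)(z_1-z_2)\ge0$ because convexity of $f$ gives $H_f\succeq0$. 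Thus the same multipliers certify the second-order KKT conditions.

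The substantive implication is $(b)\Rightarrow(a)$. Given a first-order KKT point $x=(x^+;x^-;\xi)$ with multipliers $(\mu,\beta_1,\beta_2,\beta_3,\beta_4,\pi)$, write $S\triangleq S(x)=\{i:x^+_i=x^-_i=0\}$; as above $\xi_i=1$ on $S$ and $\xi_i=0$ on $S^c$. First I would establish a local feasibility description: for $r>0$ small enough, every feasible $y=(y^+;y^-;\zeta)$ with $\|y-x\|<r$ satisfies $y^+_i=y^-_i=0$ for $i\in S$ (since $\zeta_i$ stays close to $1$, hence positive, and complementarity forces $y^+_i+y^-_i=0$) and $\zeta_i=0$ for $i\in S^c$ (since $y^+_i+y^-_i$ stays positive and complementarity forces $\zeta_i=0$). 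Then $e^T(e-\zeta)=\sum_{i\in S}(1-\zeta_i)+|S^c|\ge|S^c|=e^T(e-\xi)$, so it remains to prove $f(y^+-y^-)\ge f(x^+-x^-)$; since the preceding structure shows $y^+-y^-$ lies in the polyhedron $\Omega'\triangleq\{y\in\Real^n:y_i=0\ \forall i\in S,\ Ay\ge b\}$, it suffices to show that $x^+-x^-$ minimizes the convex function $f$ over $\Omega'$. For this I would read \eqref{KKT-F}: adding its first two block equations yields $\beta_1+\beta_2=2\mu\xi$, so $(\beta_1)_i=(\beta_2)_i=0$ on $S^c$, and the first block equation becomes $\nabla f(x^+-x^-)=\sum_{i\in S}(-\mu+(\beta_1)_i)e_i+A^T\pi$ with $\pi\ge0$ and $\pi\perp(Ax-b)$ --- exactly the stationarity (hence, by convexity, optimality) condition for $\min_{\Omega'}f$. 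Combining the two inequalities gives that $x$ is a local minimizer of \eqref{mpcc}.

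The hard part is $(b)\Rightarrow(a)$, and inside it the careful bookkeeping of the local feasible set: one must exploit nondegeneracy (Lemma~\ref{kkt-nondeg}) together with the complementarity constraint to ``pin'' each coordinate near a KKT point --- so that the $\xi$-block of the objective can only increase and the $x$-block is confined to the convex polyhedron $\Omega'$ --- after which convexity of $f$ finishes the argument. The remaining implications are either definitional or short consequences of Lemmas~\ref{GCQ} and~\ref{kkt-nondeg}.
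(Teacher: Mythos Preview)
Your proposal is correct and follows essentially the same route as the paper: $(a)\Rightarrow(b)$ via Lemma~\ref{GCQ}, $(b)\Rightarrow(c)$ by using Lemma~\ref{kkt-nondeg} to force $\xi_i\in\{0,1\}$ and hence $z_3=0$ on $M(x)$, and $(b)\Rightarrow(a)$ by exploiting nondegeneracy to pin the local feasible set and then reducing to a convex subproblem. The one noteworthy difference is in $(b)\Rightarrow(a)$: the paper keeps the pair $(x^+,x^-)$ and verifies a variational inequality over the $2n$-dimensional polyhedron $\tilde X(x)$, whereas you pass immediately to the difference $x^+-x^-$ and the $n$-dimensional polyhedron $\Omega'=\{y:y_i=0\ \forall i\in S,\ Ay\ge b\}$, deriving the multipliers for $\min_{\Omega'} f$ directly from the identity $\beta_1+\beta_2=2\mu\xi$ --- this is a mild streamlining of the paper's argument, not a genuinely different idea.
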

\begin{proof}
\noindent {\bf (a)$\Rightarrow$(b).} 
This is true because GCQ holds at every feasible point by Lemma \ref{GCQ}. \\
{\bf (b)$\Rightarrow$(a).} Suppose that $x = (x^+;x^-;\xi)$
satisfies KKT conditions \eqref{KKT-F} with multipliers $(\mu,
		\beta_1,\beta_2,\beta_3,\beta_4,\pi)$. Then by the nondegeneracy
property of a KKT point (\us{Lemma~\ref{kkt-nondeg}}), the set
\us{$\{1,\hdots,n\}$} can be partitioned into the following two sets, as in the same fashion when proving  the \uvs{CQ}:
$S(x)  \triangleq   \{i \in \{1,\hdots,n\}: x_i^+ = x_i^- = 0, \xi_i = 1\}$  and
$S^c(x)  \triangleq  \{i \in \{1,\hdots,n\}: x_i^+ + x_i^- > 0, \xi_i = 0\}.$
{We denote that  $A = (a_1,\hdots,
			a_n)$ (Note different notation from~\eqref{def-A-E})}. Then \eqref{KKT-F-1} implies
\begin{align*}
\left.
\begin{aligned}
{(\nabla_x f(x^+ - x^-))_i} - a_i^T \pi = \us{(\beta_1)_i} \geq 0
	\\
{-(\nabla_x f(x^+ - x^-))_i} + a_i^T \pi = \us{(\beta_2)_i} \geq 0
\end{aligned} \right\} \quad
	\forall i \in S^c(x). 
\end{align*}
because $\xi_i = 0$ for all $i \in S^c(x)$.
\us{Consequently, $(\beta_1)_i = -(\beta_2)_i$ where $\beta_1$ and
	$\beta_2$ are nonnegative. It follows that $(\beta_1)_i = (\beta_2)_i = 0$}, and 
\begin{align}\label{Eq 1: KKT = localmin}
(\nabla_{x} f(x^+ - x^-))_i = a_i^T \pi, \qquad \forall i \in S^c(x).
\end{align}	
We {proceed to prove} that $(x^+;x^-)$ is {a} global minimizer of the following program:
\begin{align}\label{CP}
\min & \ \tilde{f}(z) \triangleq f( z^+ - z^-), \quad \st \ z = (z^+;z^-) \in \tilde X(x),
\end{align}
where
\begin{align*}
\tilde X(x) \triangleq  \left\{( z^+; z^-) \mid z^+, z^- \in \bR^n_+,
 A( z^+ - z^- )  \geq b,   
 z^+_i = z^-_i  = 0, \forall i \in S(x) \right\}.
\end{align*}
{Consider} any feasible point $(\tilde{x}^+; \tilde{x}^-)$ of \eqref{CP}. \vvs{By applying}~\eqref{Eq 1: KKT = localmin} and noticing \vvs{$x^{\pm}_i,\tilde x^{\pm}_i = 0$} $\forall i \in S(x)$ (by def.), $\pi^TA(x^+ - x^-) = \pi^Tb, \pi \geq 0$ (by~\eqref{KKT-F}), and $A(\tilde x^+ - \tilde x^-) - b \geq 0$,
\begin{align*}
& \quad \pmat{\nabla_x f(x^+ - x^-) \\ -\nabla_x f(x^+ - x^-)}^T \left[ \pmat{\tilde{x}^+ \\ \tilde{x}^-} - \pmat{x^+ \\ x^-} \right]  \\
& = \nabla_x f(x^+ - x^-)^T [ (\tilde{x}^+ - \tilde{x}^-) - (x^+ - x^-)] 
 \\
& = \sum_{i \in S^c(x)} ( \nabla_x f(x^+ - x^-) )_i [ (\tilde{x}^+_i - \tilde{x}^-_i) - (x^+_i - x^-_i) ] \\
& =  \sum_{i \in S^c(x)} a_i^T \pi  (\tilde{x}^+_i - \tilde{x}^-_i) -  \sum_{i \in S^c(x)} a_i^T \pi (x^+_i - x^-_i) \\
& =  \sum_{{i \in S(x) \cup S^c(x)}} a_i^T \pi  (\tilde{x}^+_i - \tilde{x}^-_i)  -  \sum_{{i \in S(x) \cup S^c(x)}} a_i^T \pi (x^+_i - x^-_i) \\
& = \pi^T [ A(\tilde x^+ - \tilde x^-) - b] \geq 0.
\end{align*}
{It follows that $(x^+;x^-)$ is a solution of \yxI{VI$(\tilde X(x), \nabla_x \tilde{f})$.} By convexity of $f$ (thus $\tilde{f}$) and $\tilde X(x)$, $(x^+;x^-)$ is a global minimizer of \eqref{CP}.}  
 {Since} $ \xi_i = 1$ for $i \in S(x)$, \us{by the separability of the
	objective and the structure of the constraint sets},  {it follows that} $(x^+;x^-;\xi)$ is a minimizer of the tightened \eqref{mpcc} as follow:
\begin{align*}
\min \ f({\tilde x}_1 - {\tilde x}_2) + \gamma e^T (e - {\tilde x}_3) \quad \st \ \us{({\tilde x}_1;{\tilde x}_2;{\tilde x}_3)}  \in \us{X_{\rm tight}(x)},
\end{align*}
where
\begin{align*}
X_{\rm tight}(x) \triangleq \left\{  \pmat{\tilde x_1\\ \tilde x_2\\ \tilde x_3} : 
\begin{aligned}
{\tilde x_1, \tilde x_2} & \geq 0, \ 
 0 \leq {\tilde x}_3  \leq e, \ 
 A({\tilde x_1 - \tilde x_2})  \geq b, \\
 (\tilde x_1)_i = (\tilde x_2)_i & = 0, \qquad\forall i \in S(x),\\
 (\tilde x_3)_i & = 0, \qquad\forall i \in S^c(x) 
\end{aligned} \right\}.
\end{align*}
If $X$ denotes the feasible region 
in \eqref{mpcc}, then we can take a sufficiently small neighborhood
of $x$, denoted by $\mathcal{N}(x)$, such that $X \cap {\cal N}(x) = X_{\rm tight}(x) \cap {\cal N}(x)$.
	 Since
	 $x = (x^+;x^-;\xi)$ is \us{a} global minimizer of {$f(\tilde x_1 - \tilde x_2) + \gamma e^T (e - \tilde x_3)$ over
	 $\us{X_{\rm tight}({x})}$}, it is a global minimizer of  $f(\tilde{x}_1 - \tilde{x}_2) + \gamma e^T (e - \tilde{x}_3)$ over the smaller set
		 $\mathcal{N}(x) \cap X_{\rm tight}(\us{x})$. Since
			 $\mathcal{N}(x) \cap X_{\rm tight}(\us{x})= {\cal N}(x)
			 \cap X$, it follows that $x$ is a local
			 minimizer of \eqref{mpcc}. \qed
\end{proof}

\begin{remark}
Note that while convexity of $f$ is  observed for many loss functions, it does not guarantee the overall convexity of the problem and \eqref{mpcc} is still a nonconvex problem.
\end{remark}

\section{Tractable ADMM frameworks} \label{sec: ADMM}
In this section we discuss how to use ADMM to efficiently address MPCC \eqref{mpcc}. In Section~\ref{sec:31}, we present a perturbed proximal ADMM framework for obtaining a suitably defined solution of
\eqref{mpcc} and show in Section~\ref{sec:32} that both of the ADMM subproblems can be solved tractably, of which, one can be recast as a convex program,
while the other can be resolved in closed form. In Section~\ref{Vanilla
ADMM}, a basic ADMM framework will be presented, along with a discussion regarding why we consider its perturbed proximal variant. A standard ADMM applied to an alternative formulation of
\eqref{mpcc} is introduced in Section~\ref{subsec: alterform}. Note that ADMM applied to this formulation is easier to analyze but does have computational disadvantages arising from the intractability of the subproblem.

	\subsection{A perturbed proximal ADMM framework}\label{sec:31}
	We may reformulate \eqref{mpcc} as follows. 
	\begin{align} \label{Reformulation}
	\min & \quad f(x^+ - x^-) + \gamma \sum_{i=1}^n(1-\xi_i) + \bk1_{Z_1}(w) + \bk1_{Z_2}(w).
	\end{align}
Recall that $f(x) = f_Q(x) + g(x)$, where $f_Q(x) \triangleq x^TMx + d^Tx$, $g(x)$ is convex  and smooth, $M \in \Real^{n \times n}$ is a symmetric matrix, and $d \in \Real^n$. Let $Z_1, Z_2,$ and $w$ be defined as 
	\begin{align}  \label{def-Z1Z2}
	\begin{aligned}
Z_1 & \triangleq \left\{ (x^+; x^-; \xi) : (x^+ + x^-)^T\xi = 0 \right\},  \\
	Z_2 & \triangleq \left\{ \pmat{x^+\\x^-\\ \xi} : 
	\begin{aligned}
	0 & \leq   \xi_i  \leq 1, \forall i\\
  0 & \leq x^+, x^-\\
  b & \leq A(x^+ - x^-)
	\end{aligned}
	\right\},
	\end{aligned}
	\end{align} 
and  $w \triangleq \pmat{x^+;x^-;\xi}$, respectively. We introduce separability into the objective by adding a variable {$y \triangleq (y^+;y^-;\zeta), y^+,y^-,\zeta \in \bR^n$} and imposing an additional linear constraint.
	\begin{align}\label{tractable decomposition}
	\min_{w = y}  \ f_Q(x^+ - x^-)  + \gamma \sum_{i=1}^n(1-\xi_i) + \bk1_{Z_1}(w) + { g(y^+ - y^-) }+ \bk1_{Z_2}(y).
	\end{align}
Note that \eqref{tractable decomposition} is in the form of \eqref{formofinterest}. The intuition behind this formulation is that by separating the nonconvex set $Z_1$ from the convex polytope $Z_2$, we may potentially obtain easier subproblems when applying a splitting method. We now define a perturbed augmented Lagrangian function as follows.
\begin{align*}
\tscrL_{\rho,\alpha}(w,y,\lambda) & \triangleq {f_Q(x^+ - x^-)} + \gamma \sum_{i=1}^n(1-\xi_i) + { g(y^+ - y^-)} \\
& + (1-\rho \alpha)\lambda^T(w - y  - \alpha \lambda) + \frac{\rho}{2} \| w - y \|^2,
\end{align*}
where $w \triangleq (x^+; x^-; \xi), \alpha > 0, \rho > 0$. The perturbed proximal ADMM algorithm is presented as Algorithm~\ref{pb-admm}, denoted as ADMM$_{\rm cf}^{\mu,\alpha,\rho}$, where ``cf'' stands for ``complementarity formulation'', and $\mu, \alpha, \rho$ are algorithm parameters. The perturbation technique is inspired by Hajinezhad and Hong~\cite{Hajinezhad2019}. Note that (ADMM$_{\rm cf}^{\mu,\alpha,\rho}$) reduces to a basic ADMM when $\mu = \alpha = 0$, which will be discussed in Section~\ref{Vanilla ADMM}. We refer the readers to Remark~\ref{rmk: stopcrit.0} and Remark~\ref{rm: stopcrit.} for discussion of the stopping criteria.

\begin{algorithm}
\scriptsize
\caption{A perturbed proximal ADMM scheme: ADMM$_{\rm cf}^{\mu,\alpha,\rho}$}
\label{pb-admm}
	\begin{enumerate}
	\item[(0)] Given $w_0, y_0,\lambda_0$; Choose $\alpha, \rho, \mu, \epsilon_0 > 0$ such that $\rho\alpha \in (0,1), ( \rho + \mu ) I + 4M \succ 0$, and set $k := 0$. 
	\item[(1)] Let $w_{k+1}, y_{k+1}, \lambda_{k+1}$ be given by the following:
	\begin{align} \tag{Update-1}
	w_{k+1} & :=  {\displaystyle
		\mbox{arg}\hspace{-0.03in} \min_{w \in Z_1}} \quad \tscrL_{\rho,\alpha}(w,y_k,\lambda_k) + \frac{\mu}{2}\| w- w_k \|^2, \\
	 	\tag{Update-2}
	y_{k+1}  &:=  \mbox{arg}\hspace{-0.03in}\min_{y \in Z_2} \quad \tscrL_{\rho,\alpha}(w_{k+1},y,\lambda_k),\\
	\tag{Update-3}
	\lambda_{k+1}&  :=  (1-\rho \alpha)\lambda_k + \rho \left( w_{k+1} - y_{k+1} \right).
	\end{align}
	\item [(3)] If $\max\{ \| \rho(y_{k+1} - y_k) + \mu (w_{k+1} - w_k) \|, \| \lambda_{k+1} - \lambda_k \|/\rho \} < \epsilon_0$, STOP; 
	else $k:=k+1$ and return to (1).
	\end{enumerate}
\end{algorithm}	

We observe that there are indeed some benefits by considering
decomposition \eqref{tractable decomposition} that separates the nonconvex domain
$Z_1$ and the convex polytope $Z_2$. It turns out that this approach reduces the difficulty of both subproblems of the ADMM framework. Next, (Update-1) and
(Update-2) are shown to be tractable\footnote{By saying that an
optimization problem is tractable we mean that it either has a closed-form solution or lies in the range of convex programs that are polynomially solvable. We refer the
readers to \cite{ben2001lectures} for detailed discussion.}.
	\subsection{Tractable resolution of {ADMM Updates}}\label{sec:32}
	We now show that (Update-1) possesses a hidden
	convexity property~\cite{Ben-Tal1996}, allowing for claiming tractability of (Update-1) and obtaining its closed form solution.
	\begin{proposition}[{\bf Tractability of Update-1}]\label{closedformpro}\rm
Recall that $f_Q(x) =  x^TMx + d^Tx$ where {$M$} may be a symmetric indefinite matrix with real eigenvalues given by $s_1, \hdots, s_n$. Consider (Update-1) in scheme (ADMM$_{\rm cf}^{\mu, \alpha, \rho}$) at iteration $k+1$. {Then $(\rho + \mu) I + 4M \succ 0$ implies $ \rho + \mu + 4 s_i > 0$, $\forall i = 1,\hdots,n$}, and the following hold: 

	\noindent (i) The solution $w_{k+1} \triangleq (x^+_{k+1}; x^-_{k+1}; \xi_{k+1})$ can be obtained as a solution to a tractable convex program.

	\noindent (ii) The solution $w_{k+1}$ is available in closed form. In particular, let $h \triangleq (d;-d;-\gamma e) + (1- \rho \alpha)\lambda_k - \rho y_k - \mu w_k$, and let $V$ be an orthogonal matrix such that $V^TMV = \mbox{diag}(s_1,\hdots,s_n) \triangleq S$, and let
\begin{align}
	 G&  \triangleq \pmat{ \frac{1}{2} I_n & \frac{\sqrt{2}}{2} I_n &
		\frac{1}{2}I_n \\  \frac{1}{2}I_n & -\frac{\sqrt{2}}{2} I_n &
			\frac{1}{2}I_n \\ -\frac{\sqrt{2}}{2} I_n &&
			\frac{\sqrt{2}}{2} I_n} \pmat{I_n&&\\&V&\\&&I_n}.\label{G} 
\end{align}
Also let $q \triangleq G^T h \triangleq (q_1; q_2; q_3)$, $z \triangleq \pmat{z_1 ; z_2 ; z_3}$, $q_1, q_2, q_3, z_1, z_2, z_3 \in
	\Real^n, $ and 
	\begin{align}
	\notag
	 z_1 & \triangleq \begin{cases} 
			\frac{-(\|q_1\|+\|q_3\|)q_1}{2 (\rho + \mu) \|q_1\|}, & \|q_1\| > 0 \\
			\frac{\|q_3\|}{2 (\rho + \mu) }u,\| u \|=1, & \|q_1\| =  0
			\end{cases}, \ 
	z_3  \triangleq \begin{cases} 
			\frac{-(\|q_1\|+\|q_3\|)q_3}{2 (\rho + \mu) \|q_3\|}, & \|q_3\| > 0 \\
			\frac{\|q_1\|}{2 (\rho + \mu) }v,\|v\|=1, & \|q_3\| =  0
			\end{cases} \\
	\notag	 
	(z_2)_i & \triangleq -(q_2)_i/( \rho + \mu + 4s_i), \forall i = 1,\hdots,n.
	\end{align}
 Then
	$w_{k+1} \triangleq Gz$ is the solution to (Update-1).
	\end{proposition}
\begin{proof}
\noindent {\bf (i).} The first subproblem in (ADMM$_{\rm cf}$) is
	equivalent to the following:
	\begin{align}  
	\label{1-update} & \min_{w \in Z_1}  \ \tscrL_{\rho, \alpha} (w,y_k,\uvs{\lambda}_k) + \frac{\mu}{2} \| w - w_k \|^2  \\
\notag
	 \equiv  & \min_{(x^+ + x^-)^T \xi = 0}  \left\{ f_Q(x^+ - x^-) + \gamma 
	\sum_{i=1}^n (1-\xi_i) + (1-\rho \alpha) \lambda_k^T w + \frac{\rho}{2} 
	\| w - y_k \|^2 \right. \\
\notag
	& \left. + \frac{\mu}{2} \| w - w_k \|^2 \right\} \\ 
\label{nonconQCQP}
  \equiv &  \min_{w^T \tilde{Q}  w = 0} \left\{ w^T H w + h^T w \right\},
	\end{align}
	where $ H  \triangleq \pmat{M + \frac{\rho + \mu}{2}I & -M & \\ -M & M + \frac{\rho + \mu}{2}I & \\ &&\frac{\rho + \mu}{2}I}, \tilde{Q}  \triangleq \pmat{&&I\\&&I\\I&I&}$. \diff{In fact, $H, \tilde{Q}$ can be simultaneously orthogonally diagonalized by using $G$ defined in \eqref{G} (See Lemma~\ref{lm: SimulDiag} in Appendix for the linear algebra). Therefore, by leveraging the hidden convexity (See discussion in Section~\ref{subsec: hiddenconvex} in Appendix), a global solution to this nonconvex QCQP \eqref{nonconQCQP} can be obtained by solving a tractable convex program (In \cite{Ben-Tal1996}, it is described how a polynomial time interior point method can be applied to solve this convex program).}

 \noindent{\bf (ii).} \diff{By substituting} $z = G^Tw, z \triangleq (z_1 ; z_2 ; z_3), z_1, z_2, z_3 \in \Real^n$,  \eqref{1-update} is equivalent to a simple QCQP,
	\begin{align}\label{simpleQCQP}
	\notag
	\min & \quad \frac{\rho + \mu}{2}\|z_1\|^2 +
	\sum_{i=1}^n\left(\frac{\rho + \mu}{2}+2s_i\right)(z_2)_i^2 + \frac{\rho + \mu}{2}\|z_3\|^2
	+ q^Tz \\
	\st & \quad \|z_1\|_2 = \|z_3\|_2.
	\end{align}
\diff{Again, this is a result by leveraging Lemma~\ref{lm: SimulDiag}.} To obtain an optimal solution of
		\eqref{simpleQCQP}, we require that {the objective value is
			bounded below}. {By completing squares}, a sufficient condition for boundedness of \eqref{simpleQCQP} is 
$\frac{\rho + \mu}{2} + 2s_i > 0, \forall i = 1,\hdots,n$
{because} $z_2$ is unconstrained. This is implied by the condition $(\rho + \mu) I_n + 4M \succ 0$.
	 The result of (ii) follows by noting that \textbf{all} optimal solutions $(z_1^*; z_2^*; z_3^*)$ of \eqref{simpleQCQP} can be characterized as follows:
	\begin{align}\label{closedform0}
	\notag
	z_1^* & = \begin{cases} 
			\frac{-(\|q_1\|+\|q_3\|)q_1}{2 (\rho + \mu) \|q_1\|}, & \|q_1\| > 0 \\
			\frac{\|q_3\|}{2 (\rho+\mu) }u,\| u \|=1, & \|q_1\| =  0
			\end{cases}, \ 
	z_3^*  = \begin{cases} 
			\frac{-(\|q_1\|+\|q_3\|)q_3}{2 (\rho + \mu) \|q_3\|}, & \|q_3\| > 0 \\
			\frac{\|q_1\|}{2(\rho + \mu)}v,\|v\|=1, & \|q_3\| =  0
			\end{cases} ,\\  
 (z_2^*)_i & = -(q_2)_i/(\rho + \mu + 4s_i), \quad \mbox{ for } i = 1,\hdots,n. 
	\end{align}
Next we show that this is true. Note that $(z_2^*)_i = -(q_2)_i/(\rho + \mu + 4s_i), \forall i$, because $z_2$ is unconstrained. Since the problem is separable with respect to $z_2$, it may be removed, leading to the problem of 
\begin{align*}
\min_{z_1, z_3} \ \frac{\rho + \mu}{2}\|z_1\|^2 + \frac{\rho + \mu}{2}\|z_3\|^2 + q_1^T z_1 + q_3^T z_3 \quad \st \ \|z_1\|^2 - \| z_3 \|^2  = 0.
\end{align*}
Since $z_1$ and $z_3$ have the same magnitude, let $z_1 \triangleq rd_1$ and $z_3 \triangleq rd_3$, where $\|d_1\| = \|d_3\| = 1$. Then the constraint may be removed and the problem is further simplified as
\begin{align*}
& \min_{r, d_1, d_3} \ (\rho + \mu) r^2 + r q_1^T d_1 + r q_3^T d_3 \quad \st \ r \geq 0, \| d_1\| = 1, \| d_3 \| = 1.
\end{align*}
It follows that $r^* = \argmin_{r \geq 0} \{ (\rho + \mu) r^2 - (\| q_1 \| + \| q_3 \|)r \} = (\|q_1\| + \|q_3\|)/(2 (\rho +\mu)).$ This leads to concluding that if $\|q_1\| >0, \|q_3\| > 0$, $z_1^* = -(\|q_1\| + \|q_3\|)q_1/(2 (\rho + \mu) \|q_1\|), z_3^* = -(\|q_1\| + \|q_3\|)q_3/(2 (\rho + \mu) \|q_3\|)$. If $\|q_1\|=0, \|q_3\|>0$, then $\|z_1^*\| =  \|q_3\|/(2(\rho + \mu))$ and $z_3^* = -q_3/(2(\rho + \mu))$ and $z_1^*$ can take any direction. If $\|q_3\|=0, \|q_1\|>0$, then $\|z_3^*\| =  \|q_1\|/(2(\rho + \mu))$ and $z_1^* = -q_1/(2(\rho + \mu))$ and $z_3^*$ can take any direction.
If $\|q_1\| = \|q_3\| = 0$, then $z_1^* = z_3^* = 0.$ \qed
	\end{proof}

\begin{remark}
Note that in order to compute the closed form solution, we do need eigendecomposition $M = V S V^{-1}$. However, this only needs to be done once instead of in every iteration.
\end{remark}

\uvs{Next, (Update-2) is shown to be tractable and its solution may be available in closed-form.}
	\begin{proposition}[{\bf Tractability of Update-2}] \label{tractableupdate-2} \rm  
	{Consider (Update-2) at iteration $k+1$ in (ADMM$_{\rm cf}^{\mu, \alpha,\rho}$). Then the following hold:}
 (i) \vvs{(Update-2)} is a convex program and can be computed tractably.
 (ii) If $g(x) \equiv 0$ and the constraints $Ax \geq b$ are absent, \vvs{(Update-2)} reduces to   
		{
		\begin{align} 
& y_{k+1} = \pmat{y_{k+1,1} \\ y_{k+1,2} \\ y_{k+1,3} }, \quad		
\left.
\begin{aligned}
			(y_{k+1,1})_i  &:=  \max\{(x_{k+1}^+)_i+ (1-\rho \alpha) (\lambda_{k,1})_i/\rho ,0\} \\
			(y_{k+1,2})_i  &:=  \max\{(x_{k+1}^-)_i + (1-\rho \alpha) (\lambda_{k,2})_i/\rho, 0\} \\
			(y_{k+1,3})_i  &:=  \Pi_{[0,1]} ( (\xi_{k+1})_i + (1-\rho \alpha) ( \lambda_{k,3} )_i / \rho )
			\end{aligned} \right\} \forall i,
			\label{Cart-update}
		\end{align} 
where ${\lambda_k} \triangleq (\lambda_{k,1} ; \lambda_{k,2};  \lambda_{k,3} )$, $y_{k+1,1}, y_{k+1,2}, y_{k+1,3}, {\lambda_{k,1}}, {\lambda_{k,2}}, {\lambda_{k,3}} \in \bR^n$, and $\Pi_Z(z)$ denotes the projection of $z$ onto set $Z$.
}
	\end{proposition}
	\begin{proof}
	\noindent {\bf (i). } (Update-2) can \vvs{be cast as a linearly constrained convex smooth program}:
$\min_{y \in Z_2} \left\{ g(y^+ - y^-) - (1-\rho \alpha) \lambda_k^Ty + \frac{\rho}{2} 
	\| (x^+_{k+1} ; x^-_{k+1}; \xi_{k+1}) - y \|^2 \right\}$, which may be tractably resolved \cite{ben2001lectures}. \\
\noindent {\bf (ii).} When $g \equiv 0$ and the constraints $Ax \geq b$ are absent, then (Update-2) can be viewed as a projection of $(x^+_{k+1} ; x^-_{k+1}; \xi_{k+1}) + (1-\rho \alpha)\lambda_k/ \rho$ onto a Cartesian set: 
	$
	\hat Z_2 \triangleq \left\{ (y_1; y_2; y_3) \mid
	y_1,y_2,y_3 \in \Real^n,
	y_1 \geq 0, y_2  \geq 0,
	0 \leq  (y_3)_i  \leq 1, \forall i
	\right\}.
	$
	 Consequently, the projection onto this set reduces to
	the update given by \eqref{Cart-update}. \qed
	\end{proof}
	
\subsection{A basic ADMM framework with tractable subproblems} \label{Vanilla ADMM}
In Algorithm~\ref{pb-admm}, a perturbation parameter and a proximal term are
introduced. In fact, as we see
in Section~\ref{sec: perturbADMM}, an explicit bound can be derived for the multiplier sequence
generated by Algorithm~\ref{pb-admm}. Therefore, we may show
subsequential convergence and estimate a norm of the limit point. In this subsection, we present the vanilla ADMM framework (denoted by ADMM$_{\rm cf}$ and defined in Algorithm~\ref{admm})
applied to the tractable decomposition~\eqref{tractable decomposition}. In (ADMM$_{\rm cf}$), the augmented Lagrangian function ${\cal L}_{\rho}$ is defined as follows.
	\begin{algorithm}
	\scriptsize
	\caption{ADMM$_{\rm cf}$}
	\label{admm}
	\begin{enumerate}
	\item[(0)] Given \vvs{$y_0,{\lambda}_0$, $\epsilon > 0$, $k :=0$; Choose $\rho_0,$ s.t. $\rho_0 I_n + 4M \succ 0$}; 
	\item[(1)] Let $x_{k+1}^+ , x_{k+1}^-,\xi_{k+1}, y_{k+1}, {\lambda}_{k+1}$ be given by the following: 
	\begin{align} \tag{Update-1}
	(x_{k+1}^+; x_{k+1}^-; \xi_{k+1})  \in & \  {\displaystyle \mbox{arg}\hspace{-0.08in}\min_{(x^{\pm} ,\xi)}} \quad \Lscr_{\rho_k}(x^+ , x^-,\xi,y_k,{\lambda_k}), \\
	\tag{Update-2}
	y_{k+1}   := & \  \mbox{arg}\hspace{-0.02in}\min_{y} \quad \Lscr_{\rho_k}(x_{k+1}^+ , x_{k+1}^-,\xi_{k+1},y,{\lambda_k}),\\
	\tag{Update-3}
	{\lambda}_{k+1}  := & \ 
	{\lambda}_k + {\rho_k} \left( (x_{k+1}^+;x_{k+1}^-;\xi_{k+1}) - y_{k+1} \right).
	\end{align}
	\item[(2)] Update $\rho_k$ and let $\rho_{k+1} \leftarrow \rho_k$;
	\item [(3)] If $\max\{ \| (x_{k+1}^+;x_{k+1}^-;\xi_{k+1}) - y_{k+1} \|,\rho_k\|y_{k+1}-y_k\| \} < \epsilon$, STOP;
	else $k:=k+1$, return to (1).
	\end{enumerate}
	\end{algorithm}
\begin{align*}
\scrL_{\rho}(x^+,x^-,\xi,y,\lambda) & \triangleq f_Q(x^+ - x^-) + \gamma \sum_{i=1}^n(1-\xi_i) + { g(y^+ - y^-)} \\
& + \lambda^T(w - y) + \frac{\rho}{2} \| w - y \|^2 + \bk1_{Z_1}((x^+;x^-;\xi)) + \bk1_{Z_2}(y).
\end{align*}
We will specify in Section~\ref{sec:num} the update rule for $\rho_k$ in Step 2. Note that if we let $\mu=0$, $\alpha = 0$ and replace $\rho$  by $\rho_k$, then ADMM$^{\mu,\alpha,\rho}_{\rm cf}$ reduces to (ADMM$_{\rm cf}$). The similarity
between these two algorithms allows for (ADMM$_{\rm cf}$) to maintain the property of
tractability of the subproblems (A special case of
proposition~\ref{closedformpro} and \ref{tractableupdate-2} when $\alpha = 0$,
$\mu = 0$). However, convergence analysis of (ADMM$_{\rm cf}$) is by no means
straightforward. Since \eqref{tractable decomposition} is in the form of \eqref{formofinterest}, we have discussed in Section~\ref{sec:intro} that no existing convergence theory for ADMM schemes in nonconvex regimes is applicable.
It turns out that even to show boundedness of the multiplier sequence is challenging. On the other hand, if we assume boundedness of a subsequence of the multiplier together with other assumptions, subsequential convergence of the algorithm may be obtained. Convergence properties can be further enhanced given the K{\L}
property. Details of these discussions are kept in \diff{Section~\ref{subsec: Anal.ADMMcf} in the Appendix} while an investigation of the numerical behavior of this algorithm is presented in
Section~\ref{sec:num}.

\subsection{A standard ADMM framework on an alternative formulation} \label{subsec: alterform}
In section~\ref{sec:31}, we consider reformulation \eqref{tractable decomposition} of \eqref{mpcc}, which allows for efficient resolution of the subproblem; Note that an alternative formulation of \eqref{mpcc} exists as specified next.
\begin{align}\label{intractable decomp}
\min \ \bk1_Z(w) + f(y^+ - y^-) + \gamma e^T(e - \zeta)  \quad \st \ w - y = 0 ,
\end{align}
where $w = (x^+; x^-; \xi), y = (y^+;y^-;\zeta), Z \triangleq Z_1 \cap Z_2 $,
$Z_1$ and $Z_2$ are defined as in \eqref{def-Z1Z2}. Note that in
\eqref{intractable decomp}, the $y$ block is unconstrained and has a smooth objective function. Such a reformulation of optimization over complementarity constraints is considered in \cite{DBLP:journals/corr/WangYZ15} and an ADMM scheme can be applied (See Algorithm~\ref{Alg: intractable} below). We referred to this framework as a standard ADMM framework (or (ADMM$_0$)), since this type of ADMM scheme is favored and most studied in literature due to clear convergence guarantee.
    \begin{algorithm}
	\caption{A standard ADMM framework: ADMM$_0$}
	\label{Alg: intractable}
	\begin{enumerate}
	\item[(0)] Given $y_0,\lambda_0, \rho_0 > 0, \epsilon > 0$, set $k :=0$.
	\item[(1)] Let $w_{k+1}, y_{k+1}, \lambda_{k+1}$ be given by the following:
	\begin{align} \tag{Update-1}
	w_{k+1} & \in  {\displaystyle
		\mbox{arg}\hspace{-0.03in}\min_{w \in Z} \quad \| w - y_k + \lambda_k/\rho_k \|^2 }, \\
	\tag{Update-2}
	y_{k+1}  &:=  { \displaystyle \mbox{arg}\hspace{-0.03in}\min_{y} \quad f(y^+ - y^-) + \gamma e^T( e - \zeta ) + \frac{\rho_k}{2} \| y - w_{k+1} - \lambda_k/\rho_k \|^2 },\\
	\tag{Update-3}
	\lambda_{k+1}&  :=  \lambda_k + \rho_k ( w_{k+1} - y_{k+1} ).
	\end{align}
	\item [(2)] Update $\rho_k$ and let $\rho_{k+1} \leftarrow \rho_k$;
	\item [(3)] If $\max( \| w_{k+1} - y_{k+1} \|,\rho_k\|y_{k+1}-y_k\|) < \epsilon$, stop;
	else $k:=k+1$ and return to (1).
	\end{enumerate}
	\end{algorithm}
As indicated in \cite[Corollary 3]{DBLP:journals/corr/WangYZ15}, if $\rho_k \equiv \rho$ is large enough and the augmented Lagrangian function has the K{\L} property, (ADMM$_0$) generates a sequence convergent to a stationary point. However, such a framework is potentially slow because (Update-1) requires globally resolving an  MPCC and may render the scheme impractical. We will further explain with numerical experiments in Section~\ref{sec:num}.

\section{ Convergence Analysis } \label{sec: perturbADMM}
In the prior section, we consider the formulation \eqref{tractable decomposition} to resolve \eqref{mpcc} and present a perturbed proximal ADMM framework (ADMM$_{\rm cf}^{\mu,\alpha,\rho}$) reliant on tractable updates at each iteration.
In this section, we analyze the convergence property of this framework.
Specifically, we show that under mild assumptions, the sequence $\{ \lambda_k \}$ is bounded and a subsequence of $\{ ( w_k, y_k, \lambda_k) \}_{k \geq 1}$ converges to a perturbed KKT point of \eqref{tractable decomposition}. The main results are Theorem~\ref{thm: Subconv}, Corollary~\ref{Corr: Parameter choosing} and Theorem~\ref{KKTwithError}.
First we present some definitions used in this section. We refer interested readers to \cite{10.2307/40801236} and \cite{rockafellar2009variational} for more details.
\begin{definition}[(Limiting) subdifferential and critical point] \label{def: limsub&crit-pt}
Let $F: \bR^n \rightarrow \bR \cup \{ + \infty \}$ be a proper lower semicontinuous function and let $\bar{\partial} F (x)$ denote the Fr{\'e}chet subdifferential of $F$ at $x$, i.e.,
\begin{align*}
\bar{\partial} F (x) \triangleq \left\{ d : \liminf\limits_{z \neq x, z \rightarrow x} \frac{1}{\| z - x \|} [ F(z) - F(x) - d^T(z-x) ] \geq 0 \right\},
\end{align*}
for $x \in \dom F = \{ x : F(x) < +\infty \}$ and $\bar{\partial} F(x) = \emptyset$ if $x \notin \dom F$. Then,
\be
\item[(i).] The (limiting) subdifferential of $F$ at $x \in \dom F$, is defined as follows.
\begin{align*}
& \partial F (x) \triangleq \\
& \{ d \in \bR^n : \exists \{ x_k \}_{k \ge 1}, \mbox{ s.t. } x_k \rightarrow x, \ F(x_k) \rightarrow F(x), \ d_k \rightarrow d, \ d_k \in \bar{\partial} F(x_k) \}.
\end{align*}
\item[(ii).] $x$ is a critical point of $F$ if and only if $0 \in \partial F(x)$.
\ee
\end{definition}
\begin{definition}[(Limiting) normal cone]\label{def: limcone}
Suppose $Z$ is a nonempty closed subset of $\bR^n$, and $\bar N_Z(x)$ denotes the Fr{\'e}chet normal cone to $Z$ at $x$, so
\begin{align*}
\bar N_Z(x) = \{ v \in \bR^n : v^T(z - x) \leq o(x - z), \forall z \in Z \}, 
\end{align*}
if $x \in Z$ and $\bar N_Z(x) = \emptyset$ if $x \notin Z$. Then the (limiting) normal cone to $Z$ at $x \in Z$, denoted as $N_Z(x)$, is defined as follows.
\begin{align*}
N_Z(x) \triangleq \{ v \in \bR^n: \exists \{ x_k \}_{k \ge 1}, \mbox{ s.t. } x_k \rightarrow x,\ x_k \in Z, \ v_k \rightarrow v, \ v_k \in \bar N_Z(x_k) \}.
\end{align*}
\end{definition}
These concepts have the following properties:
\be
\item[(i).] (Closedness of $\partial F$) If $d_k \rightarrow d$, $x_k \rightarrow x$ and $d_k \in \partial F(x_k)$, $F(x_k) \rightarrow F(x)$, then $d \in \partial F(x)$.
\item[(ii).] Let $F = F_0 + F_1$. If $F_0$ is finite at $x$ and $F_1$ is smooth in a neighborhood of $x$, then $\partial F = \partial F_0 + \nabla F_1$.
\item[(iii).] If $x^* \ \in \  \mbox{argmin}\ F(x)$, then $x^*$ is a critical point of $F$. 
\item[(iv).] $\partial \bk1_Z(z) = N_Z(z), \forall z \in Z$.
\ee

To simplify the notation, we rewrite \eqref{tractable decomposition} as the
following structured program:
\begin{align} 
\min_{w \in Z_1, y \in Z_2}  \ h(w) + p(y)
\quad \st \ w - y  = 0,
\end{align}
where $Z_1, Z_2$ are defined by \eqref{def-Z1Z2}, $w \triangleq (x^+; x^-; \xi), y \triangleq (y^+;y^-;\zeta), h(w) \triangleq f_Q(x^+-x^-) + \gamma \sum_{i=1}^n (1-\xi_i)$, $f_Q(x^+ - x^-) \triangleq (x^+ - x^-)M(x^+ - x^-) + d^T(x^+ - x^-)$, $p(y) \triangleq g(y^+ - y^-) $. In addition, the perturbed augmented Lagrangian function is rewritten as follows.
$$
\tscrL_{\rho,\alpha}(w,y,\lambda) \triangleq h(w) + p(y) + (1-\rho \alpha)\lambda^T(w - y  - \alpha \lambda) + \frac{\rho}{2} \| w - y \|^2.
$$ 
Let $r_k \triangleq w_k - y_k$ and $\Delta \lambda_{k+1} \triangleq \lambda_{k+1} - \lambda_k$ for all $k \geq 0.$ We define a Lyapunov function $P_\tau^{k}$ for any $\tau > 0$ and $k \geq 1$.
\begin{align}\label{Lyapunov}
P_\tau^{k} \triangleq \tscrL_{\rho,\alpha}(w_k, y_k, \lambda_k) + \frac{(1-\rho\alpha)\alpha}{2} \| \lambda_k \|^2 + \tau \left( \frac{1-\rho\alpha}{2\rho} \right) \| \lambda_k - \lambda_{k-1} \|^2.
\end{align}
We intend to show that the sequence $\{ P^k_{\tau} \}_{k \geq 1}$ is nonincreasing and the following two lemmas are needed.
\begin{lemma} \rm \uvs{Consider the sequence $\{w_k,y_k,\lambda_k\}$ generated by (ADMM$_{cf}^{\mu,\alpha, \rho}$).} \\ 
Then the following holds for any $\nu > 0$, and any $k \geq 1$,
\begin{align}
\label{Subconv-lemma1}
\notag
& \frac{1 - \rho \alpha}{2 \rho} \left( \| \lambda_{k+1} - \lambda_k \|^2 -  \|\lambda_k - \lambda_{k-1} \|^2\right) \\
 & \leq - \left( \alpha - \frac{\nu}{2} \right) \| \lambda_{k+1} - \lambda_k \|^2 + \frac{1}{2\nu}\| w_{k+1} - w_k \|^2.
\end{align}
\end{lemma}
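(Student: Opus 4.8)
The key identity to exploit is the multiplier update (Update-3): $\lambda_{k+1} = (1-\rho\alpha)\lambda_k + \rho(w_{k+1}-y_{k+1}) = (1-\rho\alpha)\lambda_k + \rho r_{k+1}$. The inequality to be proved has the flavor of a ``telescoping-friendly'' bound on the growth of $\|\lambda_{k+1}-\lambda_k\|^2$, and the natural route is: (i) obtain an expression for $\lambda_{k+1}-\lambda_k$ in terms of $\lambda_k-\lambda_{k-1}$ and the primal increments, using Update-3 at indices $k+1$ and $k$; (ii) use the optimality condition of (Update-2) — which has no proximal term — to relate $\lambda_{k+1}$ (through $r_{k+1}$ and the gradient of $p$) across consecutive iterations, giving control of $\lambda_{k+1}-\lambda_k$ by $w_{k+1}-w_k$; (iii) combine with a Young/Cauchy–Schwarz splitting governed by the free parameter $\tau$ (which appears in the statement, presumably playing the role of my $\nu$ — I would simply use $\tau$ throughout to match the statement) to land on the asserted bound.

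\textbf{Step-by-step.} First I would write Update-3 at iteration $k$ as $\lambda_k = (1-\rho\alpha)\lambda_{k-1} + \rho r_k$, subtract it from the iteration-$(k+1)$ version to get
\[
\lambda_{k+1} - \lambda_k = (1-\rho\alpha)(\lambda_k - \lambda_{k-1}) + \rho(r_{k+1}-r_k).
\]
Then, since $r_{k+1} = (\lambda_{k+1}-(1-\rho\alpha)\lambda_k)/\rho$, an equivalent and more useful rearrangement is $\rho\alpha\lambda_k + (\lambda_{k+1}-\lambda_k) = \rho r_{k+1}$. Next I would invoke the first-order optimality condition for (Update-2): $y_{k+1}$ minimizes $p(y) - (1-\rho\alpha)\lambda_k^T y + \frac\rho2\|w_{k+1}-y\|^2$ over $y\in Z_2$, so there is a normal-cone (or subgradient of $\bk1_{Z_2}$) inclusion
\[
0 \in \partial p(y_{k+1}) - (1-\rho\alpha)\lambda_k - \rho(w_{k+1}-y_{k+1}) + \partial\bk1_{Z_2}(y_{k+1}) = \partial p(y_{k+1}) - \lambda_{k+1} + \partial\bk1_{Z_2}(y_{k+1}),
\]
where the last equality uses Update-3. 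Writing this at both $k+1$ and $k$ and using monotonicity of $\partial\bk1_{Z_2}$ together with Lipschitz continuity / convexity of $\nabla p = \nabla g$, I would extract $\langle \lambda_{k+1}-\lambda_k,\, y_{k+1}-y_k\rangle \le (\text{Lipschitz const})\|w_{k+1}-w_k\|\cdot\|y_{k+1}-y_k\|$ or a cleaner bilinear estimate; this is the ingredient that converts the $w$-increment into control on $\lambda$.

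\textbf{Finishing and the main obstacle.} With the identity $\lambda_{k+1}-\lambda_k = (1-\rho\alpha)(\lambda_k-\lambda_{k-1}) + \rho(r_{k+1}-r_k)$ in hand, I would take squared norms and apply the elementary inequality $\|a+b\|^2 \le (1+\theta)\|a\|^2 + (1+\theta^{-1})\|b\|^2$, or better, exploit the specific algebraic structure so that the $(1-\rho\alpha)$ factor yields the difference $\|\lambda_{k+1}-\lambda_k\|^2 - \|\lambda_k-\lambda_{k-1}\|^2$ on the left after multiplying by $\tfrac{1-\rho\alpha}{2\rho}$. The term $\rho(r_{k+1}-r_k) = \rho(w_{k+1}-w_k) - \rho(y_{k+1}-y_k)$ then has to be absorbed: the $\rho(y_{k+1}-y_k)$ part is tied to $\lambda_{k+1}-\lambda_k$ via the optimality estimate of step (ii), producing the $-(\alpha-\tau/2)\|\lambda_{k+1}-\lambda_k\|^2$ term (the coefficient $\alpha$ arising from $\rho\alpha\lambda_k$ cross terms), and the $\rho(w_{k+1}-w_k)$ part, split by Young's inequality with parameter $\tau$, produces the $\tfrac1{2\tau}\|w_{k+1}-w_k\|^2$ term. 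The delicate point — and the main obstacle — is bookkeeping the cross terms so that all the ``bad'' $\lambda$-quadratics either telescope or carry the right sign; getting the coefficient of $\|\lambda_{k+1}-\lambda_k\|^2$ to be exactly $-(\alpha-\tau/2)$ requires carefully using $\rho\alpha \in (0,1)$ and the precise form of Update-3 rather than a generic Young's inequality, so I would resist premature bounding and instead expand everything symbolically first, only invoking inequalities at the very last step.
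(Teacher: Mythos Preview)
Your ingredients are correct --- Update-3 algebra, the optimality condition of (Update-2) in the form $0\in\partial p(y_{k+1})-\lambda_{k+1}+N_{Z_2}(y_{k+1})$, monotonicity, and a final Young split --- and these are exactly what the paper uses. However, there is a misconception in step (ii) and a resulting detour in the execution.

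The monotonicity argument does \emph{not} produce an upper bound of the form $\langle\Delta\lambda_{k+1},y_{k+1}-y_k\rangle\le C\,\|w_{k+1}-w_k\|\,\|y_{k+1}-y_k\|$: the optimality condition of (Update-2), once simplified via Update-3, reads $\lambda_{k+1}\in\nabla p(y_{k+1})+N_{Z_2}(y_{k+1})$ and carries no $w$-dependence whatsoever, so there is nowhere for $\|w_{k+1}-w_k\|$ to enter at this stage. What monotonicity of $\nabla p$ (convexity of $p$; no Lipschitz constant is needed) together with monotonicity of $N_{Z_2}$ actually yields is only the \emph{sign} inequality $\langle\Delta\lambda_{k+1},y_{k+1}-y_k\rangle\ge 0$. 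The $w$-increment then enters purely through the algebraic identity $r_{k+1}-r_k=(w_{k+1}-w_k)-(y_{k+1}-y_k)$, giving
\[
\langle\Delta\lambda_{k+1},\,r_{k+1}-r_k\rangle\ \le\ \langle\Delta\lambda_{k+1},\,w_{k+1}-w_k\rangle.
\]
This inequality is the pivot of the paper's argument. From here one does \emph{not} square the recursion $\Delta\lambda_{k+1}=(1-\rho\alpha)\Delta\lambda_k+\rho(r_{k+1}-r_k)$: squaring produces a cross term $\langle\Delta\lambda_k, r_{k+1}-r_k\rangle$ (index mismatched with the monotonicity estimate, which controls $\Delta\lambda_{k+1}$) and an uncontrolled $\|y_{k+1}-y_k\|^2$ inside $\|r_{k+1}-r_k\|^2$. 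Instead, rewrite $r_{k+1}-r_k=\tfrac1\rho(\Delta\lambda_{k+1}-\Delta\lambda_k)+\alpha\Delta\lambda_k$ (from Update-3 at consecutive iterations), so that
\[
\langle\Delta\lambda_{k+1},r_{k+1}-r_k\rangle=\tfrac{1-\rho\alpha}{\rho}\,\langle\Delta\lambda_{k+1},\Delta\lambda_{k+1}-\Delta\lambda_k\rangle+\alpha\|\Delta\lambda_{k+1}\|^2,
\]
apply the polarization identity $2\langle a,a-b\rangle=\|a\|^2-\|b\|^2+\|a-b\|^2$, drop the nonnegative $\|\Delta\lambda_{k+1}-\Delta\lambda_k\|^2$ term (here $1-\rho\alpha>0$ is used), and only then invoke Young with parameter $\nu$ on $\langle\Delta\lambda_{k+1},w_{k+1}-w_k\rangle$. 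Your closing instinct --- ``expand everything symbolically first, only invoking inequalities at the very last step'' --- is exactly right, but apply it to the \emph{inner product} $\langle\Delta\lambda_{k+1},r_{k+1}-r_k\rangle$, not to the squared norm of the recursion.
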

\begin{proof}
Let $G_{k+1} \triangleq \nabla_y p(y_{k+1})$. By (Update-2), for all $y \in Z_2$ and \uvs{$k \geq 0$},
\begin{align} \label{Subconv-Ineq1}
\begin{aligned}
0 & \geq \left( G_{k+1} - (1- \rho \alpha) \lambda_k - \rho r_{k+1} \right)^T ( y_{k+1} - y )  \\
& = \left( G_{k+1} - \lambda_{k+1} \right)^T (y_{k+1} - y ).
\end{aligned}
\end{align}
Consequently, we have that $\forall k \geq 1$,
\begin{align}\label{Subconv-Ineq2}
 \left( G_k -  \lambda_k \right)^T (y_k - y ) \leq 0, \qquad \forall y \in Z_2.
\end{align}
By choosing $y = y_k$ in \eqref{Subconv-Ineq1}, $y = y_{k+1}$ in \eqref{Subconv-Ineq2}, then adding \eqref{Subconv-Ineq1} and \eqref{Subconv-Ineq2}, we have that for any $k \ge 1$,
\begin{align}
\notag
& (G_{k+1} - G_k - \lambda_{k+1} +  \lambda_k )^T(y_{k+1} - y_k) \leq 0, \\
\notag
\implies & \left( G_{k+1} - G_k \right)^T(y_{k+1} - y_k) - (\lambda_{k+1} - \lambda_k)^T(y_{k+1} - y_k) \leq 0,\\
\label{Subconv-Ineq2.5}
\implies  & -(\lambda_{k+1} - \lambda_k)^T(y_{k+1} - y_k) \leq 0,
\end{align}
where the last step follows from convexity of $p(y)$. Recall
$\Delta \lambda_k \triangleq \lambda_k - \lambda_{k-1},
\forall k \geq 1$. Then by adding $\Delta
\lambda_{k+1}^T (w_{k+1} -w_k)$ on both sides,
\eqref{Subconv-Ineq2.5} can be rewritten as follows for $\forall k \geq 1$,
\begin{align}
\notag
& \Delta \lambda_{k+1}^T (w_{k+1} - w_k) \\
\notag
& \overset{\eqref{Subconv-Ineq2.5}}{\geq} \Delta \lambda_{k+1}^T (w_{k+1} - y_{k+1} - w_k + y_k ) \\
\notag 
& = \Delta \lambda_{k+1}^T( r_{k+1} - r_k ) \\
\notag
& =  \Delta \lambda_{k+1}^T ( r_{k+1} - \alpha \lambda_k - r_k + \alpha \lambda_{k-1} ) + \Delta \lambda_{k+1}^T(\alpha \lambda_k - \alpha \lambda_{k-1}  ) \\
\notag
 & = \Delta \lambda_{k+1}^T \left( \frac{\Delta \lambda_{k+1}}{\rho} - \frac{\Delta \lambda_k}{\rho} \right) + \alpha \Delta \lambda_{k+1}^T \Delta \lambda_k \\
 \label{Subconv-Ineq2.6}
& = \frac{1-\rho \alpha}{\rho} \Delta \lambda_{k+1}^T(\Delta \lambda_{k+1} - \Delta \lambda_k) + \alpha \| \Delta \lambda_{k+1} \|^2.
\end{align}
Note that $\Delta \lambda_{k+1}^T(\Delta \lambda_{k+1} - \Delta \lambda_k) = \frac{1}{2}(\| \Delta \lambda_{k+1} \|^2 - \| \Delta \lambda_k \|^2 + \| \Delta \lambda_{k+1} - \Delta \lambda_k \|^2 )$. Let $\Delta w_{k+1} \triangleq w_{k+1} - w_k$. Then by \eqref{Subconv-Ineq2.6},
\begin{align*}
& \frac{1-\rho \alpha}{2\rho} \cdot (\| \Delta \lambda_{k+1} \|^2 - \| \Delta \lambda_k \|^2 + \| \Delta \lambda_{k+1} - \Delta \lambda_k \|^2 ) + \alpha \| \Delta \lambda_{k+1} \|^2 \\
& \le \Delta \lambda_{k+1}^T \vvs{\Delta w_{k+1}} \\
\implies & \frac{1-\rho \alpha}{2\rho} \cdot (\| \Delta \lambda_{k+1} \|^2 - \| \Delta \lambda_k \|^2) \\
& \leq -\alpha \| \Delta \lambda_{k+1} \|^2 + \Delta \lambda_{k+1}^T \Delta w_{k+1}\\
& \leq -\alpha \| \Delta \lambda_{k+1} \|^2
 + \frac{\nu \| \Delta \lambda_{k+1} \|^2 }{2}+ \frac{\| \vvs{\Delta w_{k+1}} \|^2}{2 \nu}  \\
& = \frac{\| \vvs{\Delta w_{k+1}}\|^2}{2 \nu}-\left( \alpha - \frac{\nu}{2} \right) \| \Delta \lambda_{k+1} \|^2 .
\end{align*}
Then the proof is complete. \qed
\end{proof}
\begin{lemma}
Consider $\{w_k,y_k,\lambda_k\}$ generated by ({\rm ADMM}$_{\rm cf}^{\mu,\alpha,\rho}$). Then
\begin{align}
\label{Subconv-lemma2}
\notag
& \left( \tscrL_{\rho,\alpha}(w_{k+1}, y_{k+1}, \lambda_{k+1}) + \frac{(1-\rho\alpha)\alpha}{2} \| \lambda_{k+1} \|^2 \right) \\
\notag
& - \left( \tscrL_{\rho,\alpha}(w_k, y_k, \lambda_k) + \frac{(1-\rho\alpha)\alpha}{2} \| \lambda_k \|^2 \right) \\
& \leq  - \frac{\mu}{2} \| w_{k+1} - w_k \|^2 - \frac{\rho}{2} \| y_{k+1} - y_k \|^2 + \frac{(1-\rho\alpha)(2-\rho\alpha)}{2\rho} \| \Delta \lambda_{k+1} \|^2.
\end{align}
\end{lemma}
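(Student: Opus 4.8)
The plan is to establish the descent inequality \eqref{Subconv-lemma2} by tracking the change of the "augmented" potential $\tscrL_{\rho,\alpha}(w,y,\lambda) + \tfrac{(1-\rho\alpha)\alpha}{2}\|\lambda\|^2$ across the three updates of (ADMM$_{cf}^{\mu,\alpha,\rho}$), in the order $w$-update, $y$-update, $\lambda$-update, and then combining the three pieces. First I would split the left-hand side telescopically:
\begin{align*}
& \left[\tscrL_{\rho,\alpha}(w_{k+1},y_{k+1},\lambda_{k+1}) - \tscrL_{\rho,\alpha}(w_{k+1},y_{k+1},\lambda_k)\right] \\
& \quad + \left[\tscrL_{\rho,\alpha}(w_{k+1},y_{k+1},\lambda_k) - \tscrL_{\rho,\alpha}(w_{k+1},y_k,\lambda_k)\right] \\
& \quad + \left[\tscrL_{\rho,\alpha}(w_{k+1},y_k,\lambda_k) - \tscrL_{\rho,\alpha}(w_k,y_k,\lambda_k)\right],
\end{align*}
plus the change $\tfrac{(1-\rho\alpha)\alpha}{2}(\|\lambda_{k+1}\|^2 - \|\lambda_k\|^2)$ coming from the extra term.

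For the first ($\lambda$) block, I would substitute the explicit form of $\tscrL_{\rho,\alpha}$ and use the update rule $\lambda_{k+1} = (1-\rho\alpha)\lambda_k + \rho r_{k+1}$, so that $w_{k+1}-y_{k+1} - \alpha\lambda_k = r_{k+1} - \alpha\lambda_k = (\lambda_{k+1}-\lambda_k)/\rho$; the difference is linear in $\lambda$, so it evaluates to $(1-\rho\alpha)(\lambda_{k+1}-\lambda_k)^T(r_{k+1}-\alpha\lambda_k) = \tfrac{1-\rho\alpha}{\rho}\|\lambda_{k+1}-\lambda_k\|^2$, and after adding the $\tfrac{(1-\rho\alpha)\alpha}{2}$ term and completing the square this should produce $\tfrac{(1-\rho\alpha)(2-\rho\alpha)}{2\rho}\|\lambda_{k+1}-\lambda_k\|^2$. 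For the second ($y$) block, since $p(y)$ is convex and smooth and $\tscrL_{\rho,\alpha}(w_{k+1},\cdot,\lambda_k)$ is strongly convex in $y$ with modulus $\rho$ over $Z_2$ (the $\alpha\lambda^T(w-y)$ term is linear in $y$), the optimality of $y_{k+1}$ gives a descent of at least $\tfrac{\rho}{2}\|y_{k+1}-y_k\|^2$ — this is the same argument as in Lemma~\ref{lemma:LagConv}. For the third ($w$) block, the presence of the proximal term $\tfrac{\mu}{2}\|w-w_k\|^2$ in (Update-1) together with convexity of the objective on the convex image (recall Proposition~\ref{closedformpro}: (Update-1) is equivalent to a convex program when $\rho I + 4M \succ 0$) gives, by optimality of $w_{k+1}$ as a minimizer of $\tscrL_{\rho,\alpha}(\cdot,y_k,\lambda_k) + \tfrac{\mu}{2}\|\cdot - w_k\|^2$,
$$
\tscrL_{\rho,\alpha}(w_{k+1},y_k,\lambda_k) + \tfrac{\mu}{2}\|w_{k+1}-w_k\|^2 \le \tscrL_{\rho,\alpha}(w_k,y_k,\lambda_k),
$$
i.e. a descent of at least $\tfrac{\mu}{2}\|w_{k+1}-w_k\|^2$. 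Summing the three estimates yields \eqref{Subconv-lemma2}.

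The main obstacle I anticipate is the $w$-block step: although (Update-1) is a nonconvex QCQP on the face of it, I must justify that $\tscrL_{\rho,\alpha}(\cdot,y_k,\lambda_k)$ restricted along the trajectory of the algorithm behaves like a convex function for the purpose of the "sufficient-decrease by proximal regularization" inequality. Here I would lean on the hidden-convexity equivalence established in Proposition~\ref{closedformpro} — after the change of variables $z = G^Tw$ the $w$-subproblem becomes genuinely convex in $z$ (the quadratic form has all coefficients $\tfrac{\rho}{2}+2s_i > 0$ under $\rho I + 4M \succ 0$), and the proximal term $\tfrac{\mu}{2}\|w-w_k\|^2$ transforms into $\tfrac{\mu}{2}\|z-z_k\|^2$ since $G$ has orthonormal structure up to the scalar factors built into \eqref{G}. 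Care is needed to verify that $G$ indeed preserves the norm so that the proximal term is faithfully carried over; this is where a short computation with the block structure of $G$ (and the orthogonality of $V$) is required. The remaining blocks are routine: the $y$-block reuses strong convexity exactly as in Lemma~\ref{lemma:LagConv}, and the $\lambda$-block is a direct algebraic identity using (Update-3), so no real difficulty arises there beyond bookkeeping of the $(1-\rho\alpha)$ factors.
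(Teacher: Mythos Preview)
Your telescoping decomposition into $w$-, $y$-, and $\lambda$-blocks is exactly the paper's approach, and your treatment of the $y$- and $\lambda$-blocks matches the paper's (modulo the harmless slip that $(1-\rho\alpha)\lambda^T(w-y-\alpha\lambda)$ is quadratic in $\lambda$, not linear; your subsequent ``completing the square'' handles this correctly).

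The one place you overcomplicate things is the $w$-block. You flag hidden convexity as ``the main obstacle,'' but no convexity is needed: since $w_{k+1}$ is a \emph{global} minimizer of $\tscrL_{\rho,\alpha}(\cdot,y_k,\lambda_k)+\tfrac{\mu}{2}\|\cdot-w_k\|^2$ over $Z_1$ and $w_k\in Z_1$ is feasible, plugging $w=w_k$ directly yields $\tscrL_{\rho,\alpha}(w_{k+1},y_k,\lambda_k)+\tfrac{\mu}{2}\|w_{k+1}-w_k\|^2\le \tscrL_{\rho,\alpha}(w_k,y_k,\lambda_k)$ --- this is precisely what the paper does in one line. The hidden-convexity machinery from Proposition~\ref{closedformpro} is used only to \emph{compute} the minimizer tractably, not to establish the descent inequality.
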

\begin{proof}
From (Update-1),
\begin{align}
\notag
& \tscrL_{\rho,\alpha}(w_{k+1}, y_k, \lambda_k) + \frac{\mu}{2}\| w_{k+1} - w_k \|^2 - \tscrL_{\rho,\alpha}(w_k, y_k, \lambda_k) \leq 0 \\
\label{Subconv-Ineq3}
\implies \ & \tscrL_{\rho,\alpha}(w_{k+1}, y_k, \lambda_k) - \tscrL_{\rho,\alpha}(w_k, y_k, \lambda_k) \leq -\frac{\mu}{2} \| w_{k+1} - w_k \|^2.
\end{align}
Also, by the optimality condition of (Update-2), if $\tilde G_{k+1} \triangleq \nabla_y \tscrL_{\rho,\alpha}(w_{k+1},y_{k+1},\lambda_k)$, then $\tilde G_{k+1}^T (y - y_{k+1}) \geq 0, \forall y \in Z_2$. Using this fact and the strong convexity of $\tscrL_{\rho,\alpha}$ in terms of $y$ with constant $\rho$, 
\begin{align}
\label{Subconv-Ineq4}
\notag
& \tscrL_{\rho,\alpha}(w_{k+1}, y_{k+1}, \lambda_k) - \tscrL_{\rho,\alpha}(w_{k+1}, y_k, \lambda_k) \\
& \leq - \tilde G_{k+1}^T(y_k - y_{k+1} ) - \frac{\rho}{2} \| y_{k+1} - y_k \|^2
\leq  - \frac{\rho}{2} \| y_{k+1} - y_k \|^2.
\end{align}
The fact that $ \Delta \lambda_{k+1} = \rho r_{k+1} - \rho \alpha \lambda_k$ and $\lambda_{k+1}^T \Delta \lambda_{k+1} = \frac{1}{2} (\| \lambda_{k+1} \|^2 - \| \lambda_k \|^2 + \| \Delta \lambda_{k+1} \|^2) $ imply:
\begin{align}
\notag
& \quad \tscrL_{\rho,\alpha}(w_{k+1}, y_{k+1}, \lambda_{k+1}) - \tscrL_{\rho,\alpha}(w_{k+1}, y_{k+1}, \lambda_k) \\
\notag
& = (1-\rho\alpha) \lambda_{k+1}^T(r_{k+1} - \alpha \lambda_{k+1}) - (1-\rho\alpha) \lambda_k^T(r_{k+1} - \alpha \lambda_k) \\
\notag
& = (1-\rho\alpha) \lambda_{k+1}^T(r_{k+1} - \alpha \lambda_k - \alpha \Delta \lambda_{k+1} ) - (1-\rho\alpha) \lambda_k^T(r_{k+1} - \alpha \lambda_k) \\
\notag
& = (1-\rho\alpha)(\lambda_{k+1} - \lambda_k )^T(r_{k+1} - \alpha \lambda_k) - (1-\rho\alpha)\lambda_{k+1}^T\alpha \Delta \lambda_{k+1} \\
\notag
& = \frac{1-\rho \alpha}{\rho} \| \Delta \lambda_{k+1} \|^2 - \frac{(1-\rho\alpha)\alpha}{2} (\| \lambda_{k+1} \|^2 - \| \lambda_k \|^2 + \| \Delta \lambda_{k+1} \|^2) \\
\label{Subcon-Ineq5}
& = \frac{(1-\rho\alpha)(2-\rho\alpha)}{2\rho} \| \Delta \lambda_{k+1} \|^2 - \frac{(1-\rho\alpha)\alpha}{2} ( \| \lambda_{k+1} \|^2 - \| \lambda_k \|^2 ).
\end{align}
Finally, \uvs{by  adding} \eqref{Subconv-Ineq3}, \eqref{Subconv-Ineq4} and \eqref{Subcon-Ineq5}, the following holds $\forall k \geq 0$, 
\begin{align*}
& \quad \tscrL_{\rho,\alpha}(w_{k+1}, y_{k+1}, \lambda_{k+1}) - \tscrL_{\rho,\alpha}(w_k, y_k, \lambda_k) \\
& = \tscrL_{\rho,\alpha}(w_{k+1}, y_{k+1}, \lambda_{k+1}) 
 - \tscrL_{\rho,\alpha}(w_{k+1}, y_{k+1}, \lambda_k) + \tscrL_{\rho,\alpha}(w_{k+1}, y_{k+1}, \lambda_k) \\
& - \tscrL_{\rho,\alpha}(w_{k+1}, y_k, \lambda_k) + \tscrL_{\rho,\alpha}(w_{k+1}, y_k, \lambda_k) - \tscrL_{\rho,\alpha}(w_k, y_k, \lambda_k) \\
& \leq -\frac{\mu}{2}\| w_{k+1}-w_k \|^2 - \frac{\rho}{2}\| y_{k+1} - y_k \|^2 \\
&  +\frac{(1-\rho\alpha)(2-\rho\alpha)}{2\rho} \| \Delta \lambda_{k+1} \|^2  - \frac{(1-\rho\alpha)\alpha}{2} ( \| \lambda_{k+1} \|^2 - \| \lambda_k \|^2 ).
\end{align*}
Then the result follows. \qed
\end{proof}

\uvs{We now impose a requirement on $h(w)+p(y)+{\rho \over 2} \|w-y\|^2$ and define several constants to be used later.}
\begin{assumption} \label{lowerbound}
\rm $h(w) + p(y) + \frac{\rho}{2} \| w - y \|^2 \geq \lb $ for all $w \in Z_1, y \in Z_2$.
\end{assumption}
\begin{definition}\label{def-c}
Recall that $\alpha, \tau, \mu, \rho$ are nonnegative parameters from Algorithm~\ref{pb-admm} and the definition \eqref{Lyapunov}. Let $\nu$ and $R$ be nonnegative constants. Then
$c_1(\nu)  \triangleq \frac{\mu}{2} - \frac{\tau}{2 \nu} $, $c_2 \triangleq {\rho\over 2} $, $c_3(\nu) \triangleq \tau \left( \alpha - \frac{\nu}{2} \right) - \frac{(1-\rho\alpha)(2-\rho\alpha)}{2\rho} $, \\
$c_4(R)  \triangleq \frac{(1-\rho\alpha)[(R+1)\rho\alpha - 1]}{2 \rho R}$, $c_5(R) \triangleq \frac{1-\rho \alpha}{2\rho} \left[ \tau - (1-\rho\alpha)R \right].$
\end{definition}

\begin{assumption}\label{Ass: c}
$\exists \nu > 0$, $R > 0$ such that $c_1(\nu), c_3(\nu), c_4(R), c_5(R) > 0$.
\end{assumption}

In the next Lemma, we prove that $\{ P_\tau^k \}_{k \geq 1}$ is a nonincreasing sequence.

\begin{lemma}\label{lm: lowerbounded} \rm
Consider $\{w_k,y_k,\lambda_k\}$ generated by (ADMM$_{\rm cf}^{\mu,\alpha,\rho}$). Then,

\noindent (i). $P_\tau^{k+1} - P_\tau^k \leq - c_1(\nu) \| w_{k+1} - w_k \|^2 - c_2 \| y_{k+1}
- y_k \|^2 - c_3(\nu) \| \lambda_{k+1} - \lambda_k \|^2 $, $\forall k \geq 1$.\\
(ii). Suppose that Assumption~\ref{Ass: c} holds. Then $\{ P_\tau^k \}_{k \geq 1}$ is non-increasing. If Assumption~\ref{lowerbound} also holds, then $P_\tau^k$ is \uvs{bounded from below}. \\
(iii). If Assumption \ref{lowerbound} and \ref{Ass: c} hold, then
$\lim\limits_{k \rightarrow \infty} (w_{k+1} - w_k) = \lim\limits_{k
\rightarrow \infty} (y_{k+1} - y_k) = \lim\limits_{k \rightarrow \infty}
(\lambda_{k+1} - \lambda_k) = 0$. \end{lemma}
\begin{proof}
(i). Take $ \tau \times \eqref{Subconv-lemma1} + \eqref{Subconv-lemma2}$ and the result follows. \\
(ii). When $c_1(\nu), c_2, c_3(\nu) > 0$ for certain $\nu$, we conclude from (i) that $P_\tau^{k+1} \leq P_\tau^k$ for all $k \geq 1$. Further,
\begin{align}
\notag
& \quad (1-\rho \alpha)\lambda_k^T(r_k - \alpha \lambda_k) \\ 
\notag & = (1-\rho \alpha) \lambda_k^T(r_k - \alpha \lambda_{k-1} - \alpha \Delta \lambda_k ) \\
\notag
& = (1-\rho \alpha) \lambda_k^T \left[ \Delta \lambda_k / \rho - \alpha \Delta \lambda_k \right] \\
\notag
& =  \frac{(1-\rho \alpha)^2}{\rho} \lambda_k^T(\lambda_k - \lambda_{k-1}) \\
\label{Subconv_eq1}
& =  \frac{(1-\rho \alpha)^2}{2\rho} ( \| \lambda_k \|^2 - \| \lambda_{k-1} \|^2 + \| \lambda_k - \lambda_{k-1} \|^2 ) \\
\label{Subconv_ineq7}
& \geq [ (1-\rho \alpha)^2 / (2\rho) ]( \| \lambda_k \|^2 - \| \lambda_{k-1} \|^2 ), \qquad k \geq 1.
\end{align}
Then,
\begin{align}
\notag
P_\tau^k & = h(w_k) + p(y_k) + \frac{\rho}{2} \| w_k - y_k \|^2 \\
\notag
& + (1-\rho \alpha)\lambda_k^T(r_k - \alpha \lambda_k) + \frac{(1-\rho\alpha)\alpha}{2} \| \lambda_k \|^2 + \tau \left( \frac{1-\rho\alpha}{2\rho} \right) \| \Delta \lambda_k \|^2 \\
& \geq \bar L + (1-\rho \alpha)\lambda_k^T(r_k - \alpha \lambda_k) + \frac{(1-\rho\alpha)\alpha}{2} \| \lambda_k \|^2 + \tau \left( \frac{1-\rho\alpha}{2\rho} \right) \| \Delta \lambda_k \|^2 \label{bd-Pkk}\\
& \geq \lb + (1-\rho \alpha)\lambda_k^T(r_k - \alpha \lambda_k)  \overset{\eqref{Subconv_ineq7}}{\geq} \lb + \frac{(1-\rho \alpha)^2}{2\rho}( \| \lambda_k \|^2 - \| \lambda_{k-1} \|^2 ).\notag
\end{align}
Then, $\sum_{k=1}^K (P_\tau^k - \lb) \geq \frac{(1-\rho \alpha)^2}{2\rho} \sum_{k=1}^K ( \| \lambda_k \|^2 - \| \lambda_{k-1} \|^2 ) \geq -\frac{(1-\rho \alpha)^2}{2\rho} \| \lambda_0 \|^2$, $\forall K \geq 1$. Since $\{ P_\tau^k - \lb \}_{k \geq 1}$ is a non-increasing sequence and the above inequality holds, $\{ P_\tau^k - \lb \}_{k \geq 1}$ is nonnegative. Thus $\{ P_\tau^k \}_{k \geq 1}$ is bounded from below.
\noindent{(iii).} This may be concluded based on (i) and (ii). \qed 
\end{proof}
\begin{remark} \label{rmk: stopcrit.0}
By Lemma~\ref{lm: lowerbounded}(iii) and the stopping criterion, Algorithm ADMM$_{\rm cf}^{\mu, \alpha, \rho}$ may terminate in finite time.
\end{remark}
The following Lemma provides an inequality related to $\| \lambda_k \|$, which helps in showing boundedness of $\| \lambda_k \|$.
\begin{lemma}\label{lm: bound of multiplier} \rm 
Consider $\{w_k,y_k,\lambda_k\}$ generated by (ADMM$_{cf}^{\mu,\alpha,\rho}$). Suppose Assumption \ref{lowerbound} holds. Then 
$P_\tau^k \geq \lb + c_4(R) \| \lambda_k \|^2 + c_5(R) \| \lambda_k - \lambda_{k-1} \|^2$ for all $R > 0$ and $k \geq 1$. Therefore, if Assumption~\ref{Ass: c} holds, $\| \lambda_k \|^2 \leq \frac{1}{c_4(R)} (P_\tau^k - \lb)$ for all $k \geq 1$.
\end{lemma}
\begin{proof}
We may use the following result for any $R > 0$:
\begin{align}\label{Subconv-Ineq8} \notag
\| \lambda_{k-1} \|^2 & = \| \lambda_{k-1} - \lambda_k  + \lambda_k \|^2 \\
	& \leq (1+R) \| \lambda_{k-1} - \lambda_k \|^2 + ( 1 + 1/R ) \| \lambda_k \|^2.
\end{align}
From the definition of $P_\tau^k$, we have that:
\begin{align*}
& P_\tau^k \\
& \vvs{\overset{\eqref{bd-Pkk},\eqref{Subconv_eq1}}{\geq}} \lb + \frac{(1-\rho\alpha)\alpha}{2}\| \lambda_k \|^2  + \tau \left( \frac{1-\rho \alpha}{2\rho} \right) \| \lambda_k - \lambda_{k-1} \|^2\\ & + \frac{(1-\rho \alpha)^2}{2\rho} ( \| \lambda_k \|^2 - \| \lambda_{k-1} \|^2 + \| \lambda_k - \lambda_{k-1} \|^2 ) \\ 
& \overset{\eqref{Subconv-Ineq8}}{\geq} \lb + \frac{(1-\rho\alpha)\alpha\| \lambda_k \|^2}{2} + \tau \left( \frac{(1-\rho \alpha)\| \lambda_k - \lambda_{k-1} \|^2}{2\rho} \right) \\
& - \frac{(1-\rho \alpha)^2\left( \frac{\| \lambda_k \|^2}{R}  + R \| \lambda_k - \lambda_{k-1} \|^2 \right)}{2\rho} \\
& \geq \lb + \frac{(1-\rho\alpha)[(R+1)\rho\alpha - 1]}{2\rho R} \| \lambda_k \|^2 + \frac{1-\rho \alpha}{2\rho} \left[ \tau - (1-\rho\alpha)R \right] \| \lambda_k - \lambda_{k-1} \|^2.
\end{align*}
Then the result follows from definitions of $c_4(R)$ and $c_5(R)$.\qed
\end{proof}
Boundedness of $\| \lambda_k \|$ and subsequential convergence are proved in the next theorem.
\begin{theorem}\label{thm: Subconv} \rm Suppose $\{(w_k;y_k;\lambda_k)\}_{k \geq 0}$ is \yxIII{generated by (ADMM$_{cf}^{\mu,\alpha,\rho}$)}. Assume that the sequences $\{w_k\}$ and $\{y_k\}$ are bounded. Suppose Assumptions~\ref{lowerbound} and~\ref{Ass: c} hold. Then the sequence $\{ \lambda_k \}_{k\geq 1}$ is bounded and a subsequence of $\{(w_k;y_k;\lambda_k)\}$ converges to $(w^*;y^*;\lambda^*)$ such that
\begin{align}\label{Critical condition}
& 0 \in \partial (h + \bk1_{Z_1})(w^*) + \lambda^*, \quad 0 \in \partial (p + \bk1_{Z_2} )(y^*) - \lambda^*, \quad w^*- y^*  = \alpha \lambda^*.
\end{align}
\end{theorem}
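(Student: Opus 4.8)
The plan is to combine the descent property of the Lyapunov function $P_\tau^k$ (Lemma~\ref{lm: lowerbounded}) with its lower bound in terms of $\|\lambda_k\|^2$ (Lemma~\ref{lm: bound of multiplier}) to first establish boundedness of $\{\lambda_k\}$, and then to pass to the limit along a convergent subsequence using the optimality conditions of the ADMM updates together with the closedness of the limiting subdifferential. First I would argue boundedness: by Lemma~\ref{lm: lowerbounded}(ii), under Assumptions~\ref{lowerbound} and~\ref{def-c}, $\{P_\tau^k\}_{k\geq 1}$ is non-increasing and bounded below, hence bounded above by $P_\tau^1$; by Lemma~\ref{lm: bound of multiplier}, $\|\lambda_k\|^2 \leq \frac{1}{c_4}(P_\tau^k - \lb) \leq \frac{1}{c_4}(P_\tau^1 - \lb)$ for all $k\geq 1$, so $\{\lambda_k\}$ is bounded. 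Together with the assumed boundedness of $\{w_k\}$ and $\{y_k\}$, the sequence $\{(w_k,y_k,\lambda_k)\}$ lies in a compact set and admits a convergent subsequence, say $(w_{k_j},y_{k_j},\lambda_{k_j})\to(w^*,y^*,\lambda^*)$.

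Next I would extract the three conditions in \eqref{Critical condition}. The third, $w^*-y^* = \alpha\lambda^*$, follows from the $\lambda$-update: $\lambda_{k+1} = (1-\rho\alpha)\lambda_k + \rho(w_{k+1}-y_{k+1})$ rearranges to $w_{k+1}-y_{k+1} = \frac{\lambda_{k+1} - (1-\rho\alpha)\lambda_k}{\rho} = \alpha\lambda_{k+1} + \frac{\lambda_{k+1}-\lambda_k}{\rho}$; since $\lambda_{k+1}-\lambda_k\to 0$ by Lemma~\ref{lm: lowerbounded}(iii), passing to the limit along the subsequence (noting also $w_{k_j+1}-w_{k_j}\to 0$ and $y_{k_j+1}-y_{k_j}\to 0$, so the shifted subsequences have the same limits) yields $w^*-y^* = \alpha\lambda^*$. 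For the first inclusion, the optimality condition of (Update-1) gives $0 \in \partial(h + \bk1_{Z_1})(w_{k+1}) + (1-\rho\alpha)\lambda_k + \rho(w_{k+1}-y_k) + \mu(w_{k+1}-w_k)$; I would rewrite $\rho(w_{k+1}-y_k) = \rho(w_{k+1}-y_{k+1}) + \rho(y_{k+1}-y_k)$ and use $(1-\rho\alpha)\lambda_k + \rho(w_{k+1}-y_{k+1}) = \lambda_{k+1}$, so that $0 \in \partial(h+\bk1_{Z_1})(w_{k+1}) + \lambda_{k+1} + \rho(y_{k+1}-y_k) + \mu(w_{k+1}-w_k)$. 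Along the subsequence the last two terms vanish by Lemma~\ref{lm: lowerbounded}(iii), $w_{k_j+1}\to w^*$ and $\lambda_{k_j+1}\to\lambda^*$, and $h$ is continuously differentiable near $w^*$ while $\partial \bk1_{Z_1}$ has closed graph (as the limiting normal cone $\scrN_{Z_1}$), so $0 \in \partial(h+\bk1_{Z_1})(w^*) + \lambda^*$. Symmetrically, the optimality condition of (Update-2), namely $0 \in \partial(p+\bk1_{Z_2})(y_{k+1}) - (1-\rho\alpha)\lambda_k - \rho(w_{k+1}-y_{k+1}) = \partial(p+\bk1_{Z_2})(y_{k+1}) - \lambda_{k+1}$, passes to the limit to give $0 \in \partial(p+\bk1_{Z_2})(y^*) - \lambda^*$.

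The main obstacle I anticipate is the careful bookkeeping around index shifts: the optimality conditions naturally involve both the $k$-th and $(k+1)$-th iterates, so one must verify that $w_{k_j}\to w^*$ implies $w_{k_j+1}\to w^*$ (which follows from $\|w_{k+1}-w_k\|\to 0$ and likewise for $y$ and $\lambda$), and that the sum of vanishing increment terms indeed goes to zero along the chosen subsequence. A secondary technical point is justifying the closedness of the graph of $\partial(h+\bk1_{Z_1})$ and $\partial(p+\bk1_{Z_2})$ at the limit point: this uses the outer semicontinuity of the limiting subdifferential together with the fact that $h$ and $p$ are smooth, so that $\partial(h + \bk1_{Z_1}) = \nabla h + \scrN_{Z_1}$ and the sum rule for subdifferentials applies exactly. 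Once these are in place, the three relations in \eqref{Critical condition} follow directly, completing the proof.
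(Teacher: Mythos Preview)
Your proposal is correct and follows essentially the same approach as the paper: bound $\{\lambda_k\}$ via Lemmas~\ref{lm: lowerbounded} and~\ref{lm: bound of multiplier}, extract a convergent subsequence, and pass to the limit in the optimality conditions of the three updates using the vanishing increments from Lemma~\ref{lm: lowerbounded}(iii) and the closed graph of the limiting subdifferential. One small algebraic slip: the rearranged $\lambda$-update gives $w_{k+1}-y_{k+1} = \alpha\lambda_k + \tfrac{\lambda_{k+1}-\lambda_k}{\rho}$ (with $\alpha\lambda_k$, not $\alpha\lambda_{k+1}$), but since $\lambda_{k+1}-\lambda_k\to 0$ the limit conclusion is unaffected.
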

\begin{proof}
Since $c_1(\nu) > 0, c_3(\nu) > 0$, then by Lemma~\ref{lm: lowerbounded}, $\{ P_\tau^k \}$ is a non-increasing sequence. Futhermore, since $c_4(R) > 0, c_5(R) > 0$, Lemma~\ref{lm: bound of multiplier} indicates that 
\begin{align}\label{Bound-Ineq1}
\| \lambda_k \|^2 \leq \frac{1}{c_4(R)} (P_\tau^k - \lb) \leq  \frac{1}{c_4(R)} (P_\tau^1 - \lb) < +\infty.
\end{align}  
Therefore, the sequence $\{ \lambda_k \}$ is bounded, implying that $\{ (w_k; y_k; \lambda_k) \}$ is bounded. Suppose $\{ (w_{n_k}; y_{n_k}; \lambda_{n_k}) \}$ denotes a convergent subsequence of $\{ (w_k; y_k; \lambda_k) \}$ such that {$ (w_{n_k}; y_{n_k}; \lambda_{n_k})$ $\to (w^*; y^*; \lambda^*)
$ as $k \to \infty.$}
Based on the optimality conditions of (Update-1), (Update-2) translated using property of critical points, and the multiplier update, the following hold:
\begin{align}
\label{optcon. of updates}
\begin{array}{l}
0 \in \partial (h + \bk1_{Z_1})(w_{n_k}) + \lambda_{n_k} + \rho(y_{n_k} - y_{n_k-1}) + \mu(w_{n_k} - w_{n_k - 1}),\\
0 \in \partial (p + \bk1_{Z_2} )(y_{n_k}) - \lambda_{n_k},  \quad
 w_{n_k} - y_{n_k} - \alpha\lambda_{n_k - 1}  = ( \lambda_{n_k} - \lambda_{n_k-1} )/\rho.
\end{array}
\end{align}
By Lemma~\ref{lm: lowerbounded}, $w_{n_k} - w_{n_k - 1} \rightarrow 0$, $ y_{n_k} - y_{n_k - 1} \rightarrow 0$, $ \lambda_{n_k} - \lambda_{n_k - 1} \rightarrow 0, k \rightarrow +\infty$, so we also have $\lambda_{n_k - 1} \rightarrow \lambda^*, k \rightarrow +\infty$. Therefore, by taking limits and the closedness of a subdifferential map, we may conclude the result. \qed
\end{proof}

\begin{remark}
(i). Boundedness of $\{ y_k \}$ and $\{ w_k \}$ is a mild assumption. First, boundedness of $\{y_k\}$ can be obtained from adding constraints such as $x^+ \leq ub^+$, $x^- \leq ub^-$ to $Z_2$. If $ub^+$ and $ub^-$ are large enough, $Z_2$ will still include the optimal solution. Second, since $r_k = w_k - y_k = \frac{1}{\rho}\lambda_k - \frac{1-\rho\alpha}{\rho} \lambda_{k-1}, \forall k \geq 1$ and $\{\lambda_k\}$ is bounded, $r_k$ is also a bounded sequence. Thus, boundedness of $\{ w_k \}$ is implied by boundedness of $\{ y_k \}$. \\
(ii). It can be shown that the conditions \eqref{Critical condition} are equivalent to KKT conditions with a feasibility error (See Theorem~\ref{KKTwithError} below). \\
(iii). {Denote $\scrH_{\tau}(w,y,\lambda)$ as
\begin{align*}
& \scrH_{\tau}(w,y,\lambda) \\
& \triangleq \tscrL_{\rho,\alpha}(w,y,\lambda) + \bk1_{Z_1}(w) + \bk1_{Z_2}(y) + \frac{(1-\rho\alpha)\alpha}{2} \| \lambda \|^2 +  \frac{\rho\| w - y - \alpha \lambda \|^2}{2(1-\rho \alpha)/\tau}.
\end{align*}
Then $\scrH_{\tau}(w_k,y_k,\lambda_k) = P_\tau^k, \forall k \geq 1, \tau > 0$. If the assumptions in Theorem~\ref{thm: Subconv} hold, and in addition, $\scrH_{\tau}(w,y,\lambda)$ satisfies the K{\L} property at $(w^*,y^*, \lambda^*)$, then $\{ (w_k,y_k,\lambda_k) \}$ converges to $(w^*,y^*, \lambda^*)$. The proof is similar to \cite[Theorem~3.1]{10.2307/40801236} and is omitted.}  \\
\diff{(iv). The K{\L} property assumption on $\scrH_{\tau}$ indeed holds when $p(y)$ is semialgebraic (See Definition~\ref{def: semiAb} in the Appendix). In fact, $\scrH_{\tau}$ is a sum of semialgebraic functions and is therefore semialgebraic. Then the result follows from the fact that a semialgebraic function satisfies the K{\L} property at every point in its domain \cite{10.2307/40801236}.} \\
\diff{(v).} Although in the context of this paper we focus on problem \eqref{tractable decomposition}, it should be noted that Theorem~\ref{thm: Subconv} may be generalized. Specifically, Theorem~\ref{thm: Subconv} may hold if we apply ADMM$_{\rm cf}^{\mu, \alpha, \rho}$ to tackle a more general class of problem: 
\[
\min f(x) + g(y)  \quad \st \ Ax + By = b, x \in X, y \in Y,
\]
where $g$ is smooth and convex, $Y$ is convex, but $f$ could be nonsmooth and nonconvex, $X$ could be nonconvex, $A, B, b$ are matrices and vectors with appropriate dimensions and do not need to be $I, -I, 0$. The analysis will basically remain the same.
\end{remark}
Note that \eqref{Critical condition} are not the precise conditions
for $(w^*; y^*; \lambda^*)$ to be a critical point of the Lagrangian
$\scrL(w,y,\lambda) \triangleq h(w) + \bk1_{Z_1}(w) + p(y) + \bk1_{Z_2}(y) +
\lambda^T(w - y)$, i.e. $0 \in \partial \Lscr(w^*, y^*, \lambda^*)$. There
exists an infeasibility error $\alpha \lambda^*$ and the following corollary
discusses how to choose the parameters such that this error can be made arbitrarily small. 
 \begin{corollary} \label{Corr: Parameter choosing} \rm Suppose that sequences $\{ w_k \}$ and $\{ y_k \}$ are bounded. In addition,
assume that $\exists \rho_- > 0$ such that $h(w) + p(y) + \frac{\rho_-}{2} \| w
- y \|^2 \geq l$ for all $w \in Z_1$, $y \in Z_2$. Then for any $\epsilon > 0$
such that $\epsilon \leq 1/(4 \rho_- + 2)$, if the parameters in
Algorithm~\ref{pb-admm} satisfy
$
\alpha = \epsilon, \rho = \frac{1}{2\epsilon}, \mu > \frac{2}{\epsilon}, w_0 = y_0,
$
then a subsequence of $\{(w_k; y_k; \lambda_k)\}_{k \geq 1}$ converges to $(w^*; y^*; \lambda^*)$ such that
\begin{align*}
0 \in \partial (h + \bk1_{Z_1})(w^*) + \lambda^*, \ 0 \in \partial (p + \bk1_{Z_2} )(y^*) - \lambda^*, \\
\| w^* - y^* \|^2 \le \alpha^2 \| \lambda^* \|^2 \leq ( 64(h(w_0) + p(y_0)) - 64l + (14 + 5\epsilon) \| \lambda_0 \|^2 ) \epsilon.
\end{align*}
\end{corollary}
\begin{proof}
Let $\nu = \alpha = \epsilon, R = 2$ and $\tau = 2$, then 
\begin{align*}
c_1(\nu) & = \frac{\mu}{2} - \frac{\tau}{2 \nu} = \frac{\mu}{2} - \frac{1}{\epsilon} > 0, \ c_2 = \frac{\rho}{2} = \frac{1}{4\epsilon}, \\
c_3(\nu) & = \tau \left( \alpha - \frac{\nu}{2} \right) - \frac{(1-\rho\alpha)(2-\rho\alpha)}{2\rho} = 2 \left( \epsilon - \frac{\epsilon}{2} \right) - \frac{(1-\frac{1}{2})(2-\frac{1}{2})}{1/\epsilon} = \frac{\epsilon}{4}, \\
c_4(R) & = \frac{(1-\rho\alpha)[(R+1)\rho\alpha - 1]}{2 \rho R} = \frac{(1-1/2)[(2+1)/2 - 1]}{ 2/\epsilon } = \frac{\epsilon}{8}, \\
c_5(R) & = \frac{1-\rho \alpha}{2\rho} \left[ \tau - (1-\rho\alpha)R \right] = \frac{1-1/2}{1/\epsilon} [2 - (1-1/2)\cdot 2] = \frac{\epsilon}{2}.
\end{align*}
Therefore, Assumption~\ref{Ass: c} holds. Since $\rho = \frac{1}{2\epsilon} \geq 2 \rho_- + 1$, we have that 
$$
h(w) + p(y) + \frac{\rho}{2} \| w - y \|^2 \geq h(w) + p(y) + \frac{\rho_-}{2} \| w - y \|^2 \geq l, \forall w \in Z_1, y \in Z_2.
$$
Thus Assumption~\ref{lowerbound} holds. Based on Theorem~\ref{thm: Subconv}, \uvs{it suffices} to show that $ \alpha^2 \| \lambda^* \|^2 \leq ( 64(h(w_0) + p(y_0)) - 64l + (14 + 5\epsilon) \| \lambda_0 \|^2) \epsilon$. By \eqref{Bound-Ineq1} in Theorem~\ref{thm: Subconv}, for $k \geq 1$,
\begin{align}
\notag
 \| \lambda_k \|^2 & \leq \frac{1}{c_4(R)}( P_\tau^1 - l ) =  \left( \frac{2}{1-\rho\alpha} \right) \left( \frac{\rho R}{(R+1)\rho\alpha-1} \right) ( P_\tau^1 - l )  \\
\label{Bound-Ineq2}
\implies  \alpha^2 \| \lambda_k \|^2 & \leq \left( \frac{2}{1-\rho\alpha} \right) \left( \frac{\alpha R}{R+1-1/(\rho\alpha)} \right) ( P_\tau^1 - l ).
\end{align}
\vvs{Since $P_\tau^1 = \tscrL_{\rho,\alpha}(w_1, y_1, \lambda_1) + \frac{(1-\rho\alpha)\alpha}{2} \| \lambda_1 \|^2 + \tau \left( \frac{1-\rho\alpha}{2\rho} \right) \| \lambda_1 - \lambda_0\|^2$, }
\begin{align}
\notag
 P_\tau^1 
& \overset{\eqref{Subconv-lemma2}}{\leq}  \tscrL_{\rho,\alpha}(w_0, y_0, \lambda_0) + \frac{(1-\rho\alpha)\alpha}{2} \| \lambda_0 \|^2  + \tau \left( \frac{1-\rho\alpha}{2\rho} \right) \| \lambda_1 - \lambda_0\|^2 \\
\notag
& \quad - \frac{\mu}{2} \| w_1 - w_0 \|^2 - \frac{\rho}{2} \| y_1 - y_0 \|^2 + \frac{(1-\rho\alpha)(2-\rho\alpha)}{2\rho}\| \lambda_1 -\lambda_0 \|^2 \\
\notag
& \leq h(w_0) + p(y_0) + \frac{\rho}{2}\| r_0 \|^2 + (1-\rho\alpha) \lambda_0^T(r_0 - \alpha \lambda_0) \\
& \notag + \frac{(1-\rho\alpha)\alpha}{2}\| \lambda_0 \|^2 + \frac{(1-\rho\alpha)(2+\tau - \rho\alpha)}{2\rho} \| \lambda_1 - \lambda_0 \|^2 \\
\notag
& = h(w_0) + p(y_0) + 0 - (1-1/2) \epsilon \| \lambda_0 \|^2 + \frac{(1-1/2)\epsilon}{2} \| \lambda_0 \|^2 \\
\notag
& + \frac{(1-1/2)(2+2 - 1/2)}{1/\epsilon} \| (1-1/2)\lambda_0  + \rho (w_1 - y_1) - \lambda_0 \|^2  \\
\notag
& \leq h(w_0) + p(y_0) - \frac{\epsilon}{4} \| \lambda_0 \|^2 + \frac{7 \epsilon}{4} \cdot \left( \frac{1}{2} \| \lambda_0 \|^2 + 2 \rho^2 \| w_1 - y_1 \|^2 \right) \\
\label{Bound-Ineq2.5}
& \leq h(w_0) + p(y_0) + \frac{5 \epsilon}{8} \| \lambda_0 \|^2 + \frac{7}{8 \epsilon} \| w_1 - y_1 \|^2.
\end{align}
Adding \eqref{Subconv-Ineq3} and \eqref{Subconv-Ineq4} and letting $k = 0$, we obtain the following.
\begin{align*}
\tilde{L}_{\rho,\alpha}(w_1,y_1,\lambda_0) - \tilde{L}_{\rho,\alpha}(w_0,y_0,\lambda_0) \leq - \frac{\mu}{2} \| w_1 - w_0 \|^2 - \frac{\rho}{2} \| y_1 - y_0 \|^2,
\end{align*}
which indicates that
\begin{align}
\notag
& h(w_1) + p(y_1) + (1-\rho \alpha)\lambda_0^T(w_1 - y_1 - \alpha \lambda_0) + \frac{\rho}{2} \| w_1 - y_1 \|^2 \\
\notag
& \leq h(w_0) + p(y_0) + (1-\rho \alpha)\lambda_0^T(w_0 - y_0 - \alpha \lambda_0) + \frac{\rho}{2} \| w_0 - y_0 \|^2 \\
\notag
& = h(w_0) + p(y_0) - (1-\rho \alpha) \alpha \| \lambda_0 \|^2 \\
\notag
\implies & (1-\rho \alpha) \lambda_0^T(w_1 - y_1) + \frac{\rho - \rho_- }{2} \| w_1 - y_1 \|^2 \\
\notag
& \leq  h(w_0) + p(y_0) - h(w_1) - p(y_1) - \frac{\rho_-}{2} \| w_1 - y_1 \|^2 \\
\notag
\implies & \frac{\rho - \rho_-}{2} \| w_1 - y_1 \|^2 \leq  h(w_0) + p(y_0) - l - (1-\rho \alpha) \lambda_0^T(w_1 - y_1) \\
\notag
& \leq h(w_0) + p(y_0) - l + (1-\rho \alpha) \| \lambda_0 \|^2/2 + (1-\rho \alpha) \| w_1 - y_1 \|^2/2 \\
\notag
\implies & \frac{\rho - \rho_- - (1-\rho \alpha)}{2} \| w_1 - y_1 \|^2 \leq  h(w_0) + p(y_0) - l + (1-\rho \alpha) \| \lambda_0 \|^2/2 \\
\notag
\implies & \frac{ 1/\epsilon - 2\rho_- - 1}{4} \| w_1 - y_1 \|^2 \leq  h(w_0) + p(y_0) - l + \| \lambda_0 \|^2/4. \\
\label{Bound-Ineq2.6}
\implies &  \| w_1 - y_1 \|^2 \leq  8\epsilon( h(w_0) + p(y_0) - l + \| \lambda_0 \|^2/4),
\end{align}
where the last inequality holds because $\epsilon \leq 1/(4 \rho_- + 2)$. By \eqref{Bound-Ineq2.5} and \eqref{Bound-Ineq2.6},
\begin{align}\label{Bound-Ineq3}
P_\tau^1 \leq 8( h(w_0) + p(y_0) ) - 7l + \left( \frac{7}{4} + \frac{5\epsilon}{8} \right) \| \lambda_0 \|^2
\end{align}
By combining \eqref{Bound-Ineq2} and \eqref{Bound-Ineq3}, we have for any $k \geq 1$,
\begin{align}
\notag
& \alpha^2 \| \lambda_k \|^2 \\
\notag
& \leq \frac{2}{1-1/2}\cdot\frac{2}{2+1-2} \left( 8( h(w_0) + p(y_0) ) - 7l + \left( \frac{7}{4} + \frac{5\epsilon}{8} \right) \| \lambda_0 \|^2 - l \right) \epsilon \\
\label{Bound for multiplier}
& = ( 64(h(w_0) + p(y_0)) - 64l + (14 + 5\epsilon) \| \lambda_0 \|^2) \epsilon.
\end{align}
This implies that $\alpha^2 \| \lambda^* \|^2 \leq ( 64(h(w_0) + p(y_0)) - 64l + (14 + 5\epsilon) \| \lambda_0 \|^2 ) \epsilon. $ \qed
\end{proof}

\begin{remark} \label{rm: stopcrit.}
Based on the optimality conditions of (Update-1), (Update-2), and the multiplier update, the following holds for any $k \geq 0$:
\begin{align*}
\begin{array}{l}
0 \in \partial (h + \bk1_{Z_1})(w_{k+1}) + \lambda_{k+1} + \rho( y_{k+1} - y_k ) + \mu (w_{k+1} - w_k ), \\
0 \in \partial (p + \bk1_{Z_2} )( y_{k+1} ) - \lambda_{k+1},  \ 
 w_{k+1} - y_{k+1} - \alpha \lambda_k  = ( \lambda_{k+1} - \lambda_k )/ \rho.
\end{array}
\end{align*}
According to \eqref{Bound for multiplier}, if we choose the parameters as in Corollary~\ref{Corr: Parameter choosing}, the stopping criteria in Algorithm~\ref{pb-admm} indicates that: 
\begin{align*}
& {\rm dist}(0, \partial( h + \bk1_{Z_1} ) (w_{k+1}) + \lambda_{k+1} ) < \epsilon_0, \ 0 \in \partial( p + \bk1_{Z_2} )( y_{k+1} ) - \lambda_{k+1}, \\
& \| w_{k+1} - y_{k+1} \| < \sqrt{ ( 64(h(w_0) + p(y_0)) - 64l + (14 + 5\epsilon) \| \lambda_0 \|^2) \epsilon } + \epsilon_0.
\end{align*}
\end{remark}

Finally we will show that the conditions \eqref{Critical condition} in Theorem~\ref{thm: Subconv} are equivalent to KKT conditions of \eqref{tractable decomposition} with a feasibility error $\alpha \lambda^*$. 

Denote $w^* \triangleq (x_+^*;x_-^*;\xi^*), w \triangleq (x^+;x^-;\xi)$, $y^* \triangleq (y_1^*;y_2^*;y_3^*), y \triangleq (y_1;y_2;y_3)$. 
By Definition~\ref{first order KKT}, $(w^*;y^*)$ satisfies first-order KKT conditions of \eqref{tractable decomposition} if there exist $\mu \in \bR$, $\beta_1, \beta_2, \beta_3, \beta_4 \in \bR^n$, $ \pi \in \bR^m $ such that
\begin{subequations} \label{KKT-tf}
\begin{align}
\label{KKT-tf-1}
\pmat{\nabla f_Q (x^*_+ - x^*_-) + \nabla g(y_1^* - y_2^*) \\ -\nabla f_Q (x^*_+ - x^*_-) - \nabla g(y_1^* - y_2^*) \\ - \gamma e} + \pmat{ \mu \xi^*-\beta_1 - A^T \pi \\  \mu \xi^* -\beta_2 + A^T \pi  \\ \mu (x^*_+ + x^*_-) + \beta_4 - \beta_3 } = 0,
& \\
\label{KKT-tf-2}
0 \leq \beta_1 \perp y_1^* \geq 0, \\
\label{KKT-tf-3}
0 \leq \beta_2 \perp y_2^* \geq 0, \\
\label{KKT-tf-4}
0 \leq \beta_3 \perp y_3^* \geq 0, \\
\label{KKT-tf-5}
0 \leq \beta_4 \perp e - y_3^* \geq 0, \\
\label{KKT-tf-6}
0 \leq \pi \perp A(y_1^* - y_2^*) - b \geq 0, \\
\label{KKT-tf-7}
( x^*_+ + x^*_- )^T \xi^* = 0, \\
\label{KKT-tf-8}
w^* - y^* = 0 .
\end{align}
\end{subequations}
It can be easily seen that \eqref{KKT-tf} is equivalent to \eqref{KKT-F} by merging $w^*$ and $y^*$. Recall that according to discussions in Section~\ref{sec:MPCC}, point satisfying \eqref{KKT-F} is exactly the local minimum of \eqref{mpcc}, thus the local minimum of $\ell_0$-minimization \eqref{L0min}.

\begin{theorem}\label{KKTwithError} \rm
Suppose that $(w^*; y^*; \lambda^*)$ satisfies \eqref{Critical condition}, and recall that
$h(w) = f_Q(x^+ - x^-) + \gamma e^T(e - \xi)$, $p(y) = g(y_1 - y_2)$ are smooth functions. 
Assume that vector $(\xi^*; \xi^* ; x_+^*+x_-^*) \neq 0$. Then $\exists \mu \in \bR, \beta_1, \beta_2, \beta_3, \beta_4 \in \bR^n, \pi \in \bR^m $ such that \eqref{KKT-tf-1} - \eqref{KKT-tf-7} hold and $w^* - y^* = \alpha \lambda^*$.
\end{theorem}
\begin{proof}
We know that $\partial \bk1_{Z_1}(w) = \sN_{Z_1}(w)$, $\partial \bk1_{Z_2}(y) = \sN_{Z_2}(y)$. Due to \eqref{Critical condition} and the smoothness of function $h$, $0 \in   \partial (h  + \bk1_{Z_1}) (w^*) + \lambda^* \Rightarrow 0 \in \nabla_w h(w^*) + \lambda^* + \partial \bk1_{Z_1}(w^*) \Rightarrow - \nabla_w h(w^*) - \lambda^* \in \sN_{Z_1}(w^*)$. Recall $Z_1 = \{(x^+;x^-;\xi) \in \bR^{3n} \mid \xi^T(x^+ + x^-) = 0 \}$. Then by \diff{Lemma~\ref{NormalConeCharacterization} in the Appendix} and the assumption $(\xi^*; \xi^*;  x_+^*+x_-^*) \neq 0$, we have $\sN_{Z_1}(w^*) = \{ \mu(\xi^*; \xi^*;  x_+^*+x_-^*) \mid \mu \in \bR \}$. Therefore, $\exists \mu \in \bR$ s.t. 
\begin{align} \label{w-update: KKTwithError}
\nabla_w h(w^*) + \lambda^*  + \mu(\xi^*; \xi^*;  x_+^*+x_-^*) = 0.
\end{align}
On the other hand, \eqref{Critical condition} and smoothness of function $p$ imply $0 \in \partial (p + \bk1_{Z_2})(y^*) - \lambda^* \Rightarrow 0 \in \nabla_y p(y^*) - \lambda^* + \partial \bk1_{Z_2}(y^*) \Rightarrow - \nabla_y p(y^*) + \lambda^* \in \sN_{Z_2}(y^*)$. Since $Z_2$ is a convex set, $\sN_{Z_2}(y^*) = \{ v \mid v^T(y - y^*) \leq 0, \forall y \in Z_2 \}$. Therefore, $(  \nabla_y p(y^*) - \lambda^* )^T(y - y^*) \geq 0, \forall y \in Z_2$. This indicates that $y^*$ is the optimal solution of the linear program: $\min_{y \in Z_2} \{ (\nabla_y p(y^*) - \lambda^* )^Ty \}$. Thus the KKT conditions are satisfied at $y^*$, i.e. $\exists \beta_1, \beta_2, \beta_3, \beta_4 \in \bR^n, \pi \in \bR^m$ s.t. 
\begin{align}\label{y-update: KKTwithError}
\begin{aligned}
\nabla_y p(y^*) - \lambda^* + (-\beta_1 - A^T\pi ; -\beta_2 + A^T \pi ; \beta_4 - \beta_3 ) = 0, \\
0 \leq \beta_1 \perp y_1^* \geq 0, 0 \leq \beta_2 \perp y_2^* \geq 0, 0 \leq \beta_3 \perp y_3^* \geq 0,\\
0 \leq \beta_4 \perp e - y_3^* \geq 0, 0 \leq \pi \perp A(y_1^* - y_2^*) - b \geq 0.
\end{aligned}
\end{align}
\vvs{From} \eqref{w-update: KKTwithError} and \eqref{y-update: KKTwithError}, utilizing the def. of $h$ and $p$, and adding the feasibility constraints $ (x^*_+ + x^*_-)^T \xi^* = 0$, $w^* - y^* = \alpha \lambda^*$, we obtain the perturbed KKT conditions. \qed
\end{proof}

\section{{Preliminary numerics}} \label{sec:num}
In Section~\ref{Subsec: test problem}, we describe the test problem of interest while in Section~\ref{subsec:tract-subprob}, we study the impact of tractability by comparing tractable ADMM frameworks to their standard counterpart. In Section~\ref{subsec: admm_vs_others}, performance of (ADMM$_{\rm cf}$) is examined by comparing it with other methods.\footnote{All experiments are conducted on Matlab and the code is uploaded to \texttt{https://github.com/yue-xie/l0-minimization}.}

	\subsection{Least squares regression with $\ell_0$-norm}\label{Subsec: test problem}

Suppose $f_Q(x) \triangleq \|Cx - d\|^2$, $C \in \bR^{p \times n}, g(x) \equiv 0$, and there is no linear constraint $Ax \geq b$ in \eqref{L0min}, leading 
	to the following $\ell_0$-regularized least-squares regression:
	\begin{align}\label{LSR}\tag{$\ell_0$-LSR}
	\min \|Cx - d\|^2 + \gamma \| x\|_0.
	\end{align}
This special case finds application in signal recovery and regression
problems. The rows of $C$ are generated from a
multivariate normal $N(0,I_n)$ \us{while the true}
coefficients $x^{\rm true}$ \us{are} created as \us{follows}: 
	(1) Generate $x^{\rm true}_i$ for $i =1,\hdots,n$ from uniform
	distribution $U(-60,60)$.
	(2) If $\abs{x^{\rm true}_i} \geq \frac{60 \kappa}{n}$ then $x^{\rm
		true}_i \leftarrow 0$ for $i = 1, \hdots, n$.
 Then $x^{\rm true}$ is approximately \yx{$\kappa$-sparse (or
			$\|x^{\rm true}\|_0 \approx \kappa$)}. The observation vector $d = Cx^{\rm true} + \epsilon$, where $\epsilon_i \sim N(0, \sigma^2)$ and $\sigma^2 = \| x^{\rm
		true} \|^2 / 10$.
	
\subsection{Impact of tractable subproblems}\label{subsec:tract-subprob}

In this subsection, we will compare the ADMM based algorithms (Algorithms~\ref{pb-admm}-\ref{Alg: intractable}) proposed in Section~\ref{sec: ADMM} to resolve \eqref{LSR}.

\noindent{\bf Algorithm descriptions and settings.} We start all algorithms from an initial point $w_0 = y_0 = (e; \textbf{0}_n; \textbf{0}_n)$, $\lambda_0 = {\bf 0}_{3n}$. The maximum runtime allowed is 200s. Experiments are run on CPU of 1.3GHz Intel Core i5 with 8GB memory. Other settings are as follows.\\
\noindent{\bf (ADMM$_{\rm cf}$)}: Please refer to Algorithm~\ref{admm} for details. Specifically, we use the following rule for updating $\rho_k$:\footnote{This update rule for $\rho_k$ is related to the convergence property of (ADMM$_{\rm cf}$). Please refer to \diff{Section~\ref{subsec: Anal.ADMMcf}} for details.}
\begin{align*}
& \mbox{If }  (\rho_k - \delta) \| y_{k+1} - y_k \| < \sqrt{2} \| \lambda_{k+1} - \lambda_k \| \mbox{ and } \rho_k \leq \rho_{\max},
\mbox{ then } \rho_{k+1} := \delta_{\rho} \rho_k; \\
& \mbox{else } \rho_{k+1}:= \rho_k.
\end{align*}
In addition, $\rho_0 = 1$, $\rho_{\max} = 2000$, $\delta = \rho_0 / 2$, $\delta_\rho = 1.01$, $\epsilon = 10^{-4}$;\\
\noindent{\bf (ADMM$_{\rm cf}^{\mu,\alpha,\rho}$)}: Algorithm~\ref{pb-admm}. $\alpha = 10^{-3}$, $\rho = 1/(2\alpha)$, $\mu = 3/\alpha$, $\epsilon_0 = 10^{-2}$.\\
\noindent{\bf (ADMM$_0$)}: Algorithm~\ref{Alg: intractable} with Update-1
solved by \texttt{Baron}. The maximum runtime for \texttt{Baron} is set to
200s. Note that we do not fix the penalty parameter at a value suggested by
theory, which more often than not is impractical and involves problem parameter
estimation. Instead, we update it adaptively. The update rule for $\rho_k$,
inspired by augmented Lagrangian schemes \cite{Birgin2010}, is as follows:
\begin{align*}
& \mbox{If } k = 0 \mbox{ or } ( h_{k+1} \geq 10^{-2} \mbox{ and } h_{k+1} > 0.9 h_k \mbox{ and } \rho_k < 500 ), \rho_{k+1} = 1.01 \rho_k; \\
& \mbox{otherwise } \rho_{k+1} = \rho_k;
\end{align*}
where $h_k = \| w_k - y_k \|$ for all $k \geq 0$. $\rho_0 = 1$, $\epsilon = 10^{-4}$. \\
\textit{Stopping criteria.} The stopping criteria for (ADMM$_{\rm cf}$) and (ADMM$_0$) guarantee that the KKT residual is below $\epsilon$ if terminated within time limit. For (ADMM$_{\rm cf}^{\mu,\alpha,\rho}$),  it is guaranteed that the KKT residual is below $\epsilon_0 + \mathcal{O}(\sqrt{\epsilon})$ (See Remark~\ref{rm: stopcrit.}). Thus, stopping criteria of the three algorithms are related to the optimality conditions. 

\noindent{\bf Metric.} In Table~\ref{tab:hiddenconvex_vs_nonconvex},
KKT residual for (ADMM$_{\rm cf}$), (ADMM$_{\rm cf}^{\mu,\alpha,\rho}$) and (ADMM$_0$) are $\max\{  \rho_{K-1} \| y_K - y_{K-1}  \|, \| (x_K^+;x_K^-;\xi_K) - y_K \| \}$, $\max\{ \| \rho ( y_K - y_{K-1} ) +  \mu (x_K - x_{K-1}) \|,  \| w_K - y_K \| \}$ and $\max\{ \rho_{K-1} \| y_K - y_{K-1}  \|, \| w_K - y_K \| \}$, respectively. $K$ is the last iteration. 

\noindent{\bf Results.} In Table~\ref{tab:hiddenconvex_vs_nonconvex}, we
provide a comparison of (ADMM$_{\rm cf}$), (ADMM$_{\rm
cf}^{\mu,\alpha,\rho}$) and (ADMM$_0$) to address \eqref{LSR}. Note
that (ADMM$_{\rm cf}$), (ADMM$_{\rm cf}^{\mu,\alpha,\rho}$) are designed for
formulation \eqref{tractable decomposition}, which renders tractable
subproblems, while (ADMM$_0$) is for formulation \eqref{intractable decomp},
which requires global resolution of an MPCC as the subproblem. Therefore, even
though (ADMM$_0$) can be efficient when the dimension is relatively low, but
becomes less so when $n$ is larger. This is because the subproblem solver does not scale well with problem size and requires significant time for larger dimensions. This is supported by the drastically reduced number of outer loop iterations and large
KKT residual upon termination when $n$ is larger. Meanwhile, (ADMM$_{\rm cf}$) and (ADMM$_{\rm cf}^{\mu,\alpha,\rho}$) appear to be scale far better with $n$ due to tractability
of the subproblem. Furthermore, it can be seen that
(ADMM$_{\rm cf}$) is far more efficient than the other two methods. It spends far less time to find solutions with lower KKT residual. In fact, it also provides better objective function value than the other two methods as observed during the experiment. Further exploration of (ADMM$_{\rm cf}$) through comparison with other $\ell_0$-minimization solvers is presented in the next subsection.

	\begin{table}[htbp]
	\begin{center}
{	
	\begin{tabular}{c|ccc|ccc|ccc}
\hline	
($n$, $\| x^{\rm tr.} \|_0$, $\gamma$) & \multicolumn{3}{c}{(ADMM$_{\rm cf}$)} &  \multicolumn{3}{|c|}{(ADMM$_{\rm cf}^{\mu,\alpha,\rho}$)} &  \multicolumn{3}{c}{(ADMM$_0$)} \\
\hline
& t(s) & res. & iter. & t(s) & res. & iter. & t(s) & res. & iter.\\
\hline
(20, 1, 1) & 0.38 & 9.4e-5 & 587  & 2.2e0 & 1.0e-2 & 4.90e3 & 201 & 3.0e-3 & 135 \\
(20,  1,  10)  & 0.46 & 7.9e-5 & 515 & 2.0e2 & 3.5e-1 & 2.05e5 & 200 & 1.0e-2 & 158 \\
(20, 4,  1) & 0.26 & 7.2e-5 & 214 & 2.0e2 & 7.6e-2 & 2.67e5 & 200 & 8.7e-3 & 197 \\
(20,  4,  10) & 0.70 & 9.9e-5 & 798 & 2.0e2 & 1.8e-1 & 2.18e5 & 200 & 2.1e-2 & 150 \\
(50,  10,  1) & 1.10 & 1.0e-4 & 781 & 6.3e-1 & 9.9e-3 & 551 & 209 & 7.2e0 & 2 \\
(50,  10,  10) & 2.10 & 9.9e-5 & 1093 & 2.0e0 & 1.6e-1 & 1.18e5 & 201 & 2.5e-1 & 101 \\
(50,  18,  1) & 0.62 & 7.7e-5 & 283 & 2.0e0 & 1.8e-1 & 1.42e5 & 215 & 1.3e1 & 2 \\
(50,  18, 10) & 4.20 & 9.9e-5 & 1802 & 2.0e0 & 6.9e-1 & 1.29e5 & 201 & 4.8e-1 & 47 \\
(100,  6,  1) & 0.58 & 9.3e-5 & 617 & 2.5e-1 & 9.6e-3 & 200 & 209 & 9.8e0 & 2 \\
(100,  6,  10) & 0.66 & 9.8e-5 & 593 & 2.0e2 & 1.4e-1 & 2.10e5 & 399 & 4.8e0 & 14 \\
(100,  19,  1) & 0.76 & 9.2e-5 & 483 & 4.0e-1 & 9.9e-3 & 376 & 211 & 1.0e1 & 2 \\
(100,  19, 10) & 1.80 & 9.0e-5 & 535 & 2.0e2 & 1.3e-1 & 1.72e5 & 289 & 5.5e0 & 5 \\
\hline
	\end{tabular}
}
	\caption{ Comparison of methods on \eqref{LSR}, $p = 10$
 } 
 \label{tab:hiddenconvex_vs_nonconvex}
	\end{center}
	\end{table}

	\subsection{Comparison among (ADMM$_{\rm cf}$) and other methods} \label{subsec: admm_vs_others}
	In this set of experiments, we test (ADMM$_{\rm cf}$) on \eqref{LSR} with higher dimensions ($p =
	256, n = 1024$) and compare it with other known
	methods directly addressing $\ell_0$-minimization: iterative hard
	thresholding (IHT) and iterative hard thresholding with warm start
	(IHTWS) \cite{Blumensath2008}.  We again test the schemes on \eqref{LSR} and choose almost the same settings as in Section \ref{Subsec: test problem}, the only difference being that $\epsilon \in
	\bR^p, \epsilon_i \sim N(0, \sigma^2), i.i.d., \sigma^2 =
	\frac{(x^{\rm true})^TI_nx^{\rm true}}{\rm SNR}$, where SNR refers
	to the  signal-to-noise ratio. All experiments are conducted on CPU of 3.4GHz Intel Core i7 with 16GB memory.

\noindent{\bf Algorithm descriptions and settings.} 

\noindent{\bf (IHT) and (IHTWS)}: (IHT) is implemented with 50 initial points (including the origin and points drawn from normal distribution $N(0,I_n)$), and the best solution is chosen. (IHTWS) is warm-started from a point computed by matching pursuit. The termination condition for both (IHT) and (IHTWS) is $\|x_{k+1} - x_k\| < 1\times 10^{-6}$. 

\noindent{\bf (ADMM$_{\rm cf}$)}:
Implementation of (ADMM$_{\rm cf}$) is almost the same with the last experiment in Section~\ref{subsec:tract-subprob}: Initial point is selected as $ y_0 = ({e}_{n},{\bf 0}_{n},{\bf 0}_{n}) $, ${\lambda_0 = \bold{0}_{3n}}$, and the parameters are chosen as $\rho_0 = \gamma$, $\epsilon = 10^{-4}$, $\delta_\rho = 1.01$, $\delta = \rho_0/2$, $\rho_{\max} = 2000$, ${\rm time}_{\max} = 300$ for all cases. 

\noindent{\bf Metric.} In Table~\ref{tab:ADMMcf vs. others}, ${\rm RDF} \triangleq \frac{f_{\rm method} - f_{\rm ADMM_{\rm cf}}}{f_{\rm ADMM_{\rm cf}}}$, where $f_{\rm ADMM_{\rm cf}}$ is calculated as follows. Suppose Algorithm~\ref{admm} terminates when $k = T$. Let $(\bar x^+; \bar x^-; \bar \xi) = y_{T+1}$. Then the solution given by (ADMM$_{\rm cf}$) is $\bar x^+ - \bar x^-$ and
\begin{align*}
& f_{\rm ADMM_{\rm cf}} = 
\begin{cases}
\| C(\bar x^+ - \bar x^-)-d \|^2 + \gamma (n - e^T \bar \xi), \\
\hspace{+.5in} \mbox{if } \max( \| w_{T+1} - y_{T+1} \|,\rho_T\|y_{T+1}-y_T\|) \leq \epsilon \\
\| C(\bar x^+ - \bar x^-)-d \|^2 + \gamma \| \bar x^+ - \bar x^- \|_0, \\
\hspace{+.5in} \mbox{if } \max( \| w_{T+1} - y_{T+1} \|, \rho_T\|y_{T+1}-y_T\|) > \epsilon 
\end{cases}
\end{align*}

	\begin{table}[htbp]
	{
	\begin{center}
	\begin{tabular}{c|cc|ccc|ccc}
	\hline
     & \multicolumn{2}{c|}{ RDF } & \multicolumn{3}{c|}{time(s)} & \multicolumn{3}{c}{$ \| x^{\rm sol.} \|_0 $} \\
	\hline
	(SNR, $\| x^{\rm tr.} \|_0 $, $\gamma$) & (I) & (W)  &  (I) & (W) &
	(A) & (I) & (W) & (A) \\
	\hline
	(5, 16, 0.10) & 1.92 & -0.02 & 1.2 & 1.2 & 95.4 & 624 & 194 & 98 \\ 
\hline 
(5, 16, 1.00) & 0.83 & -0.03 & 1.3 & 0.3 & 17.2 & 135 & 77 & 62 \\ 
\hline 
(5, 16, 10.00) & 0.58 & -0.05 & 1.5 & 0.1 & 41.7 & 7 & 15 & 12 \\ 
\hline 
(5, 16, 50.00) & 0.93 & -0.10 & 2.0 & 0.1 & 42.5 & 0 & 10 & 7 \\ 
\hline 
(5, 87, 0.10) & 2.93 & 0.62 & 1.2 & 2.2 & 102.1 & 981 & 404 & 180 \\ 
\hline 
(5, 87, 1.00) & 3.37 & 0.59 & 1.3 & 1.7 & 19.5 & 893 & 326 & 161 \\ 
\hline 
(5, 87, 10.00) & 2.94 & 0.46 & 1.5 & 4.5 & 75.8 & 639 & 230 & 133 \\ 
\hline 
(5, 87, 50.00) & 0.82 & 0.33 & 20.1 & 1.0 & 92.4 & 261 & 168 & 103 \\ 
\hline 
(5, 253, 0.10) & 2.75 & 0.80 & 1.4 & 2.5 & 236.3 & 1006 & 483 & 268 \\ 
\hline 
(5, 253, 1.00) & 2.69 & 0.59 & 1.5 & 2.0 & 234.2 & 990 & 425 & 268 \\ 
\hline 
(5, 253, 10.00) & 2.49 & 0.34 & 1.6 & 1.8 & 300.0 & 932 & 357 & 267 \\ 
\hline 
(5, 253, 50.00) & 2.15 & 0.11 & 1.9 & 3.1 & 63.7 & 831 & 293 & 264 \\ 
\hline 
(10, 20, 0.10) & 1.65 & -0.18 & 1.3 & 0.5 & 118.7 & 529 & 128 & 68 \\ 
\hline 
(10, 20, 1.00) & 0.29 & -0.18 & 1.5 & 0.2 & 15.4 & 76 & 53 & 22 \\ 
\hline 
(10, 20, 10.00) & 2.09 & -0.03 & 1.6 & 0.1 & 43.2 & 4 & 16 & 14 \\ 
\hline 
(10, 20, 50.00) & 0.34 & -0.18 & 2.1 & 0.1 & 58.7 & 0 & 10 & 4 \\ 
\hline 
(10, 82, 0.10) & 2.82 & 0.42 & 1.3 & 1.7 & 96.1 & 978 & 365 & 164 \\ 
\hline 
(10, 82, 1.00) & 3.83 & 0.56 & 1.4 & 4.3 & 20.9 & 881 & 285 & 150 \\ 
\hline 
(10, 82, 10.00) & 2.68 & 0.35 & 1.7 & 1.8 & 73.8 & 599 & 209 & 117 \\ 
\hline 
(10, 82, 50.00) & 0.74 & 0.23 & 3.8 & 0.7 & 72.8 & 212 & 138 & 93 \\ 
\hline 
(10, 249, 0.10) & 2.32 & 0.66 & 1.4 & 2.6 & 12.4 & 1010 & 506 & 304 \\ 
\hline 
(10, 249, 1.00) & 2.26 & 0.47 & 1.4 & 2.1 & 8.7 & 990 & 446 & 304 \\ 
\hline 
(10, 249, 10.00) & 2.11 & 0.25 & 1.5 & 1.8 & 6.1 & 945 & 381 & 304 \\ 
\hline 
(10, 249, 50.00) & 1.91 & 0.05 & 1.8 & 2.4 & 6.6 & 877 & 316 & 301 \\ 
\hline 
	\end{tabular}
	\caption{Comparison of methods on \eqref{LSR} ($p \times n$ = 256 $\times$ 1024 ) }\label{tab:ADMMcf vs. others}
	(I): (IHT) (W): (IHTWS) (A): (ADMM$_{\rm cf}$)
	\end{center}
	}
	\end{table}
	
\noindent{\bf Results.} From Table \ref{tab:ADMMcf vs. others}, we conclude the following:
	\begin{enumerate}
	\item[(1)] Although ($\rm{ADMM_{\rm cf}}$) takes more time, it
	generally produces solutions that are  superior to (IHT) in objective
	function value and provides better values than (IHTWS) in most cases. Note that ($\rm{ADMM_{\rm cf}}$) is cold started. 
	\item[(2)] ($\rm{ADMM_{\rm cf}}$) generally produces sparser solution than (IHTWS) and (IHT), which indicates that ($\rm{ADMM_{\rm cf}}$) scheme is potentially more favorable from a statistical standpoint.
	\end{enumerate}

	\section{Concluding remarks and future work} \label{sec:con} We consider a full complementarity reformulation of a general class of
$\ell_0$-norm minimization problems. The focus of this paper lies on the characterization and efficient computation of KKT points for this formulation. In particular, we show that a suitable (Guignard) constraint qualification holds at every feasible point. Moreover, when $f$ is a convex function, a point satisfies the first-order KKT conditions if and only if it is a local minimizer. Next, two tractable ADMM schemes are presented for resolution. In these schemes, a hidden convexity property is leveraged to allow for tractable resolution of ADMM subproblems. For the perturbed proximal ADMM framework, subsequential convergence to KKT points with arbitrarily small error under mild assumptions can be shown. Preliminary empirical studies show the significance of having tractable subproblems in ADMM schemes and that the tractable ADMM framework compares well with its competitors. In future work, we may consider characterization and computation of KKT points of problems complicated by cardinality constraints \eqref{Msparse} and affine sparsity constraints \cite{dong17structural}. \\

\noindent {\bf Acknowledgements.}
The authors would like to acknowledge an early discussion with Dr. Ankur
Kulkarni of IIT, Mumbai,  as well as the inspiration
provided by Dr. J. S. Pang during his visit to Penn. State University, and suggestion by Dr. Mingyi Hong in INFORMS 2018, Denver. 




\bibliographystyle{siam}
\def\cprime{$'$}

\section{Appendix}

\subsection{Hidden Convexity}\label{subsec: hiddenconvex}
Consider a QCQP defined as follows:
	\begin{align}
	\label{QCQP}
	\min  & \left\{ x^THx + h^Tx \mid  \uvs{\ell} \leq x^TQx \leq r \right\},
	\end{align}
	where $x\in \Real^n, H \in \Real^{n \times n}, Q \in \Real^{n \times
		n}, h \in \Real^n, \uvs{\ell} \in \Real, r \in \Real.$ Suppose that \eqref{QCQP} is feasible and the
		two matrices $H$ and $Q$ can be simultaneously diagonalized.
		{Recall from~\cite{Ben-Tal1996} that $H$ and $Q$ can be simultaneously
			diagonalized if there exists a nonsingular matrix} $P \in \Real^{n \times n}$ such that 
	\begin{align*}
	P^THP = D \triangleq \diag(d_1,\hdots,d_n) \mbox{ and } 
	P^TQP = S \triangleq \diag(s_1,\hdots,s_n).
	\end{align*}
	Let $d \triangleq (d_1; \hdots; d_n), s \triangleq (s_1; \hdots; s_n) $.
	Then, by {utilizing a transformation $x = Py$ and  $c = P^Th$},
	\eqref{QCQP} can be written as follows:
	\begin{align}
	\label{diagonalQCQP}
	\min \left\{ y^T D y + c^Ty \mid  \uvs{\ell} \leq y^T S y \leq r \right\}, 
	\end{align}
	Further, consider the following convex program:
	\begin{align}
	\label{convexreform}
	\min_{w_1,\hdots,w_n} \left\{ \sum_{i=1}^n (d_i w_i - |c_i| \sqrt{w_i}) \mid \ell \leq \sum_{i=1}^n s_iw_i \leq r, \  w \geq 0\right\},
	\end{align}
	which is defined using $c,d,\uvs{\ell},$ and $r$. In fact, problems \eqref{diagonalQCQP} and
	\eqref{convexreform} are equivalent in the following sense: (i) if
	one is  {unbounded}, so is the other; (ii) if both are finite, the
	infimum of \eqref{diagonalQCQP} is equal to the the infimum of
	\eqref{convexreform}; (iii) {the optimal solution $y^*$ of
		\eqref{diagonalQCQP} can be constructed from {the} optimal solution $w^*$ of \eqref{convexreform} through the following equations:}
	\begin{align}
	y^*_j = -\sgn(c_j)\sqrt{w^*_j}, \quad j = 1,\hdots, n
	\end{align}
	where $\sign(c_j) = 1$ if $c_j \geq 0$ and $\sign(c_j) = -1$ if $c_j
	< 0$. Through the above arguments, a global minimizer of the
	solution to nonconvex program
	\eqref{diagonalQCQP} may be  obtained by the solution of a suitably defined convex program \eqref{convexreform}. Thus \eqref{diagonalQCQP} may have \textit{hidden convexity}. This property of QCQP is first discovered by Ben-Tal and Teboulle in 1996 \cite{Ben-Tal1996}.
\subsection{Proofs}
\begin{lemma}[{\bf Tightness of relaxation}] \label{Lm:TightRelax}
	\rm Consider the problem \eqref{mpcc} and suppose a global
	minimizer to this problem is denoted by $(x^{\pm},\xi).$ Let 
	$(\tilde x,\tilde x^{\pm},\tilde \xi)$ be defined as follows:
		\begin{align} \label{def-xtilde}
				\tilde \xi & \triangleq \xi, 
		\tilde x_i^+  \triangleq 
		\begin{cases} x_i^+-x_i^-, & \mbox{ if } x_i^+ \geq
		x_i^- \\
						 			   0, & \mbox{ otherwise}
							\end{cases}, 
						\tilde x_i^-  \triangleq 
						\begin{cases} 0, & \mbox{ if } x_i^+ \geq x_i^- \\
						 			  x_i^--x_i^+. & \mbox{ otherwise}
							\end{cases}, \\
							\notag
							& \forall i = 1,\hdots,n.
				\tilde x \triangleq \tilde x^+-\tilde x^-. 
	\end{align}
	Then $(\tilde x,\tilde x^{\pm},\tilde \xi)$ is a  
	a global minimizer
	of \eqref{mpcc-unrel}.
	\end{lemma}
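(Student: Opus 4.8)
The plan is to prove Lemma~\ref{Lm:TightRelax} by showing two things: (a) the constructed point $(\tilde x, \tilde x^{\pm}, \tilde \xi)$ is feasible for ($\Pscr_{\rm cc}^{\rm full}$), and (b) it attains the same objective value as the optimal solution $(x^{\pm}, \xi)$ of ($\Pscr_{\rm rel}$), while no point of ($\Pscr_{\rm cc}^{\rm full}$) can do better. Since ($\Pscr_{\rm cc}^{\rm full}$) is a restriction of ($\Pscr_{\rm rel}$) (adding only the constraint $(x^+)^Tx^- = 0$), the optimal value of ($\Pscr_{\rm cc}^{\rm full}$) is at least that of ($\Pscr_{\rm rel}$); exhibiting a feasible point of the former matching the latter's optimal value closes the loop.

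First I would verify feasibility of $(\tilde x^+; \tilde x^-; \tilde \xi)$ with respect to ($\Pscr_{\rm cc}^{\rm full}$). By construction in \eqref{def-xtilde}, exactly one of $\tilde x_i^+, \tilde x_i^-$ is nonzero for each $i$, so $(\tilde x^+)^T \tilde x^- = 0$ and $\tilde x^+, \tilde x^- \geq 0$; also $\tilde x_i^+ + \tilde x_i^- = |x_i^+ - x_i^-| \le x_i^+ + x_i^-$, and $\tilde x_i^+ - \tilde x_i^- = x_i^+ - x_i^- = x_i$, hence $A\tilde x = A(x^+ - x^-) \ge b$. The box constraint $0 \le \tilde\xi \le e$ is inherited since $\tilde\xi = \xi$. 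For the complementarity $(\tilde x^+ + \tilde x^-)^T \tilde\xi = 0$: at each index $i$, since $\tilde x_i^+ + \tilde x_i^- = |x_i^+ - x_i^-| \le x_i^+ + x_i^-$ and $(x^+ + x^-)^T \xi = 0$ with all quantities nonnegative, we get $(x_i^+ + x_i^-)\xi_i = 0$, so either $\xi_i = 0$ or $x_i^+ + x_i^- = 0$ (which forces $\tilde x_i^+ + \tilde x_i^- = 0$); either way $(\tilde x_i^+ + \tilde x_i^-)\xi_i = 0$. Thus $(\tilde x, \tilde x^{\pm}, \tilde\xi)$ is feasible for ($\Pscr_{\rm cc}^{\rm full}$).

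Next I would compare objective values. The objective of ($\Pscr_{\rm cc}^{\rm full}$) at this point is $f(\tilde x^+ - \tilde x^-) + \gamma e^T(e - \tilde\xi) = f(x) + \gamma e^T(e - \xi)$, which is exactly the objective value of ($\Pscr_{\rm rel}$) at its global minimizer $(x^{\pm}, \xi)$. Now, any feasible point $(\hat x, \hat x^{\pm}, \hat\xi)$ of ($\Pscr_{\rm cc}^{\rm full}$) yields a feasible point $(\hat x^{\pm}, \hat\xi)$ of ($\Pscr_{\rm rel}$) (just drop the $(\hat x^+)^T\hat x^- = 0$ constraint and note $\hat x = \hat x^+ - \hat x^-$ is determined), with the same objective value; hence the optimal value of ($\Pscr_{\rm cc}^{\rm full}$) is bounded below by the optimal value of ($\Pscr_{\rm rel}$), which equals $f(x) + \gamma e^T(e-\xi)$. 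Combining, $(\tilde x, \tilde x^{\pm}, \tilde\xi)$ attains this lower bound and is therefore a global minimizer of ($\Pscr_{\rm cc}^{\rm full}$).

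I do not anticipate a serious obstacle here — the argument is essentially bookkeeping. The one point requiring a little care is the complementarity constraint $(\tilde x^+ + \tilde x^-)^T\tilde\xi = 0$, where one must use the index-wise nonnegativity to pass from $(x^+ + x^-)^T\xi = 0$ to the per-coordinate implication and then to the corresponding statement for the tilde variables; the inequality $\tilde x_i^+ + \tilde x_i^- \le x_i^+ + x_i^-$ is what makes this work cleanly. A secondary subtlety is ensuring the direction of the reduction is stated precisely: ($\Pscr_{\rm cc}^{\rm full}$) has \emph{more} constraints than ($\Pscr_{\rm rel}$), so its optimal value is $\ge$, and it is the explicit feasible point that forces equality.
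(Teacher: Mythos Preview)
Your proposal is correct and follows essentially the same approach as the paper's proof: verify feasibility of $(\tilde x,\tilde x^{\pm},\tilde\xi)$ for ($\Pscr_{\rm cc}^{\rm full}$), observe the objective value is preserved, and conclude via the relaxation inequality. Your handling of the complementarity constraint $(\tilde x^+ + \tilde x^-)^T\tilde\xi = 0$ via the inequality $\tilde x_i^+ + \tilde x_i^- = |x_i^+ - x_i^-| \le x_i^+ + x_i^-$ is in fact slightly more streamlined than the paper's index-set decomposition, but the underlying idea is identical.
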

	\begin{proof}
	Consider a solution $(x^{\pm},\xi)$ to \eqref{mpcc}. We
	first prove that the constructed solution $(\tilde x, \tilde x^{\pm},
			\tilde \xi)$ is feasible with respect to \eqref{mpcc-unrel}
		and then prove that it is optimal.
	
	\noindent {\bf Feasibility of $(\tilde x, \tilde x^{\pm}, \tilde \xi)$.} 
	By definition \eqref{def-xtilde}, we have that $\tilde x^{\pm} \geq 0$
	and  \\$\min(\tilde
			x_i^+,\tilde x_i^-) = 0$ for $i = 1, \hdots, n$. Consequently,
	   $\tilde x^+ \perp \tilde x^-$. Furthermore,  $\tilde x = \tilde x^+-\tilde x^- =
	   x^+ - x^-$ and $\tilde \xi = \xi$. This implies that $A \tilde x = A \tilde
	   x^+ - A\tilde x^- = A x^+ - Ax^- \geq b.$  Finally, it suffices to
	   show that $(\tilde x^++\tilde x^-)^T \tilde \xi = 0.$ By the
	   feasibility of $(x^{\pm},\xi)$ with respect to
	\eqref{mpcc}, we have that
	\begin{align}
	\notag	0 & = \sum_{i=1}^n  (x_i^++x_i^-)\xi_i  = \sum_{i \in \Iscr_+}
				(\underbrace{x_i^+-x_i^-}_{\triangleq \tilde x_i^+} + 2x_i^-)\xi_i +
				\sum_{i \in \Iscr_-}  (2x_i^++\underbrace{x_i^--x_i^+}_{\triangleq \tilde
						x_i^-})\xi_i\\
		\label{eq1}	& = \sum_{i \in \Iscr_+}
				(\tilde x_i^+ + \underbrace{\tilde x_i^-}_{=0} + 2x_i^-)\xi_i +
				\sum_{i \in \Iscr_-}  ( \underbrace{\tilde x_i^+}_{=0}+\tilde
						x_i^- + 2x_i^+ )\xi_i,
	\end{align}
	where $\Iscr_+ \triangleq  \{ i : x_i^+ \geq x_i^- \}$ and $\Iscr_- \triangleq \{ i : x_i^+ < x_i^- \}$. Since, \eqref{eq1}
	can be expressed as follows:
	\begin{align} \notag 
	0 &=  \sum_{i \in \Iscr_+}
				(\tilde x_i^+ + \tilde x_i^- + 2x_i^-)\xi_i +
				\sum_{i \in \Iscr_-}  (\tilde x_i^++\tilde
						x_i^- + 2x_i^+)\xi_i \\
			&	= (\tilde x^+ + \tilde x^-)^T \xi + 
	\sum_{i \in \Iscr_+} 2x_i^- \xi_i + \sum_{i \in \Iscr_-} 2x_i^+ \xi_i,
	\end{align}
	 and $x^{\pm}, \tilde x^{\pm}, \xi \geq 0$ implying that  
	\begin{align*}  
	&	  (\tilde x^+ + \tilde x^-)^T \xi  \geq 0,   
	\sum_{i \in \Iscr_+} 2x_i^- \xi_i \geq 0,  \sum_{i \in \Iscr_-}
	2x_i^- \xi_i \geq 0 \\
 &		\implies  
	 (\tilde x^+ + \tilde x^-)^T \xi  = 0,  
	 \overset{\xi = \tilde{\xi}}{\implies}
	 (\tilde x^+ + \tilde x^-)^T \tilde \xi  = 0.
	\end{align*}
	
	\noindent {\bf Optimality of $(\tilde x, \tilde x^{\pm}, \tilde \xi)$.} We observe
	that the $f(\tilde x) + \gamma \sum_{i=1}^n (1-\tilde \xi_i) = f(x^+-x^-) +
	\gamma \sum_{i=1}^n (1-\xi_i)$ for $\tilde x = x^+-x^-$ and $\tilde \xi = \xi$. But
	since \eqref{mpcc} is a relaxation of \eqref{mpcc-unrel}, the
	optimality of $(\tilde x, \tilde x^{\pm}, \tilde \xi)$ follows from
	\uvs{feasibility of} $(\tilde x, \tilde x^{\pm}, \tilde \xi)$ with
	respect to the tightened optimization problem with an identical
	objective value. \qed
\end{proof}

\begin{lemma}\label{lm: SimulDiag}
\rm Given $H \triangleq \pmat{M + \frac{\rho + \mu }{2}I_n & -M & \\ -M & M + \frac{\rho + \mu}{2}I_n & \\ & &\frac{\rho + \mu}{2}I_n}, \tilde{Q}  \triangleq \pmat{&&I_n\\&&I_n\\I_n&I_n&}$, and $G \triangleq \pmat{ \frac{1}{2} I_n & \frac{\sqrt{2}}{2} I_n &
		\frac{1}{2}I_n \\  \frac{1}{2}I_n & -\frac{\sqrt{2}}{2} I_n &
			\frac{1}{2}I_n \\ -\frac{\sqrt{2}}{2} I_n & &
			\frac{\sqrt{2}}{2} I_n} \pmat{I_n&&\\&V&\\&&I_n}$, where $V$ is orthogonal such that $S \triangleq V^TMV$ is diagonal. Then G is an orthogonal matrix, and $H, \tilde{Q}$ can be simultaneously diagonalized through $G$.
\end{lemma}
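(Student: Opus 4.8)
The plan is to verify the claimed factorization directly by block-matrix multiplication, using the structure of the $3n \times 3n$ matrices in blocks of size $n$. First I would write $G = G_0 (I_n \oplus V \oplus I_n)$ where $G_0$ is the fixed scalar-block matrix appearing in \eqref{G}, and check that $G_0$ is orthogonal: since each $n \times n$ block is a scalar multiple of $I_n$, this reduces to checking that the $3\times 3$ numeric matrix with rows $(\tfrac12,\tfrac{\sqrt2}{2},\tfrac12)$, $(\tfrac12,-\tfrac{\sqrt2}{2},\tfrac12)$, $(-\tfrac{\sqrt2}{2},0,\tfrac{\sqrt2}{2})$ has orthonormal rows (and columns) — a one-line computation. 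Orthogonality of $G$ then follows because $I_n \oplus V \oplus I_n$ is orthogonal (as $V^TV = I_n$) and a product of orthogonal matrices is orthogonal.

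Next I would compute $G^T \tilde Q G$ and $G^T H G$. Since $\tilde Q$ and $H$ both have scalar-block structure in the basis adapted to the first/third coordinates — more precisely, $\tilde Q$ is built from $I_n$ blocks and $H$ is built from $M$-blocks and $\rho I_n$-blocks — the conjugation splits into two independent pieces. I would first conjugate by $G_0$: because every block of $\tilde Q$ is a scalar multiple of $I_n$, $G_0^T \tilde Q G_0$ is again a scalar-block matrix whose block pattern is governed by the numeric $3\times 3$ matrix $G_0$ conjugating the numeric $3 \times 3$ version of $\tilde Q$; a direct check shows this diagonalizes the "$\tilde Q$-part" into $\mathrm{diag}(I_n, 0, -I_n)$ (or similar signature). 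For $H$, conjugating by $G_0$ turns the $M$-blocks into blocks of the form (scalar)$\cdot M$ arranged so that only the middle block survives with an $M$-dependence, leaving something like $\mathrm{diag}(\tfrac{\rho}{2}I_n,\ \tfrac{\rho}{2}I_n + 2M,\ \tfrac{\rho}{2}I_n)$ up to bookkeeping; then conjugating the middle block by $V$ replaces $M$ by $S = V^TMV$, which is diagonal, so the whole matrix becomes diagonal. I would present this as: $G^T H G$ and $G^T \tilde Q G$ are both diagonal, matching the diagonal entries already used in \eqref{simpleQCQP} (namely $\rho/2 + 2s_i$ etc.).

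The computation is almost entirely mechanical, so the main obstacle is purely organizational: keeping the $3\times 3$ block arithmetic straight and confirming that the \emph{same} $G$ diagonalizes \emph{both} $H$ and $\tilde Q$ (the nontrivial content — a generic orthogonal change of basis will not simultaneously diagonalize two arbitrary symmetric matrices, so one must exploit that $H$ and $\tilde Q$ commute, or simply exhibit the common diagonalizer as done here). I would therefore first note, as an optional sanity check invoking Theorem~\ref{SD}, that $H\tilde Q = \tilde Q H$ (another short block computation using that $M$ commutes with $I_n$), which \emph{guarantees} a common orthogonal diagonalizer exists; then the explicit $G$ above is simply an exhibited choice, and the proof is complete once the two products $G^T H G$ and $G^T\tilde Q G$ are computed and seen to be diagonal. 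I would keep the written proof to the $3\times 3$ numeric reduction plus the remark that the middle block is handled by $V$, deferring the fully expanded block multiplication to the reader.

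\begin{proof}
Write $G = G_0\, \mathrm{blkdiag}(I_n, V, I_n)$, where $G_0$ denotes the scalar-block matrix with rows of blocks $(\tfrac12 I_n,\ \tfrac{\sqrt2}{2}I_n,\ \tfrac12 I_n)$, $(\tfrac12 I_n,\ -\tfrac{\sqrt2}{2}I_n,\ \tfrac12 I_n)$, $(-\tfrac{\sqrt2}{2}I_n,\ 0,\ \tfrac{\sqrt2}{2}I_n)$. The numeric $3\times 3$ matrix underlying $G_0$ has orthonormal rows and columns, so $G_0^T G_0 = I_{3n}$; since $\mathrm{blkdiag}(I_n,V,I_n)$ is orthogonal ($V^TV=I_n$), $G$ is orthogonal. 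A direct block computation shows $H\tilde Q = \tilde Q H$ (using that $M$ commutes with $I_n$), so by Theorem~\ref{SD} a common orthogonal diagonalizer exists; the matrix $G$ is such a choice, as one verifies by computing $G^T \tilde Q G$ and $G^T H G$ blockwise: conjugation by $G_0$ reduces $\tilde Q$ to a diagonal scalar-block matrix and reduces $H$ to $\mathrm{blkdiag}(\tfrac{\rho}{2}I_n,\ \tfrac{\rho}{2}I_n + 2M,\ \tfrac{\rho}{2}I_n)$ up to the block ordering, after which conjugating the middle block by $V$ replaces $2M$ by $2S$, a diagonal matrix. Hence $G^T H G$ and $G^T \tilde Q G$ are both diagonal, which is the asserted simultaneous diagonalization.
\end{proof}
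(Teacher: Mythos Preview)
Your proposal is correct and follows essentially the same approach as the paper: factor $G = G_0\,\mathrm{blkdiag}(I_n,V,I_n)$, check orthogonality of each factor, then compute $G_0^T H G_0$ and $G_0^T \tilde Q G_0$ blockwise (the paper does this fully expanded, you reduce to the underlying $3\times 3$ numeric matrix), and finally conjugate the middle block by $V$. The only addition in your write-up is the optional commutativity check $H\tilde Q = \tilde Q H$ invoking Theorem~\ref{SD} as a sanity step; the paper omits this and simply exhibits and verifies $G$ directly.
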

\begin{proof}
\begin{align*}
& G^TG \\
& = \pmat{I_n&&\\&V^T&\\&&I_n} \pmat{ \frac{1}{2} I_n & \frac{1}{2} I_n &
		-\frac{\sqrt{2}}{2}I_n \\  \frac{\sqrt{2}}{2}I_n & -\frac{\sqrt{2}}{2} I_n &
			 \\  \frac{1}{2} I_n & \frac{1}{2} I_n &
			\frac{\sqrt{2}}{2} I_n} \pmat{ \frac{1}{2} I_n & \frac{\sqrt{2}}{2} I_n &
		\frac{1}{2}I_n \\  \frac{1}{2}I_n & -\frac{\sqrt{2}}{2} I_n &
			\frac{1}{2}I_n \\ -\frac{\sqrt{2}}{2} I_n & &
			\frac{\sqrt{2}}{2} I_n} \pmat{I_n&&\\&V&\\&&I_n} \\
& = \pmat{I_n&&\\&V^T&\\&&I_n} \pmat{I_n&&\\&I_n&\\&&I_n} \pmat{I_n&&\\&V&\\&&I_n} = \pmat{I_n&&\\&V^TV&\\&&I_n} = I_{3n}
\end{align*}
Therefore, $G$ is orthogonal. Note that
{\footnotesize
\begin{align*}
& \pmat{ \frac{1}{2} I_n & \frac{1}{2} I_n &
		-\frac{\sqrt{2}}{2}I_n \\  \frac{\sqrt{2}}{2}I_n & -\frac{\sqrt{2}}{2} I_n &
			 \\  \frac{1}{2} I_n & \frac{1}{2} I_n &
			\frac{\sqrt{2}}{2} I_n}
\pmat{M + \frac{\rho + \mu}{2}I_n & -M & \\ -M & M + \frac{\rho + \mu}{2}I_n & \\ & &\frac{\rho + \mu}{2}I_n} 
\pmat{ \frac{1}{2} I_n & \frac{\sqrt{2}}{2} I_n &
		\frac{1}{2}I_n \\  \frac{1}{2}I_n & -\frac{\sqrt{2}}{2} I_n &
			\frac{1}{2}I_n \\ -\frac{\sqrt{2}}{2} I_n & &
			\frac{\sqrt{2}}{2} I_n} \\
& = \pmat{ \frac{1}{2} I_n & \frac{1}{2} I_n &
		-\frac{\sqrt{2}}{2}I_n \\  \frac{\sqrt{2}}{2}I_n & -\frac{\sqrt{2}}{2} I_n &
			 \\  \frac{1}{2} I_n & \frac{1}{2} I_n &
			\frac{\sqrt{2}}{2} I_n} \pmat{ \frac{\rho + \mu}{4} I_n & \sqrt{2}M + \frac{\sqrt{2}}{4} (\rho + \mu) I_n & \frac{\rho + \mu}{4}  I_n \\ \frac{\rho + \mu}{4}I_n & -\sqrt{2} M - \frac{\sqrt{2}}{4} (\rho + \mu)  I_n & \frac{\rho + \mu}{4}I_n \\ - \frac{\sqrt{2}}{4} (\rho + \mu) I_n &  & \frac{\sqrt{2}}{4} (\rho + \mu) I_n }  \\
& = \pmat{\frac{\rho + \mu}{2}I_n &&\\ & 2M + \frac{\rho + \mu}{2}I_n & \\ && \frac{\rho + \mu}{2}I_n}.
\end{align*}
}
This fact implies:
\begin{align*} 
& G^THG \\
& = \pmat{I_n&&\\&V^T&\\&&I_n} \pmat{\frac{\rho + \mu}{2}I_n &&\\ & 2M + \frac{\rho + \mu}{2}I_n & \\ && \frac{\rho + \mu}{2}I_n} \pmat{I_n&&\\&V&\\&&I_n} \\
& =  \pmat{\frac{\rho + \mu}{2}I_n &&\\ & 2 V^T M V + \frac{\rho + \mu}{2} V^TV & \\ && \frac{\rho + \mu}{2}I_n} = \pmat{\frac{\rho + \mu}{2}I_n &&\\ & 2 S + \frac{\rho + \mu}{2} I_n & \\ && \frac{\rho + \mu}{2}I_n}
\end{align*}
Meanwhile,
\begin{align*}
& G^T\tilde{Q}G \\
& = \pmat{I_n&&\\&V^T&\\&&I_n} \pmat{ \frac{1}{2} I_n & \frac{1}{2} I_n &
		-\frac{\sqrt{2}}{2}I_n \\  \frac{\sqrt{2}}{2}I_n & -\frac{\sqrt{2}}{2} I_n &
			 \\  \frac{1}{2} I_n & \frac{1}{2} I_n &
			\frac{\sqrt{2}}{2} I_n} \pmat{&&I_n\\&&I_n\\I_n&I_n&} \cdot \\ & \quad \pmat{ \frac{1}{2} I_n & \frac{\sqrt{2}}{2} I_n &
		\frac{1}{2}I_n \\  \frac{1}{2}I_n & -\frac{\sqrt{2}}{2} I_n &
			\frac{1}{2}I_n \\ -\frac{\sqrt{2}}{2} I_n & &
			\frac{\sqrt{2}}{2} I_n} \pmat{I_n&&\\&V&\\&&I_n} \\
& = \pmat{I_n&&\\&V^T&\\&&I_n} \pmat{ \frac{1}{2} I_n & \frac{1}{2} I_n &
		-\frac{\sqrt{2}}{2}I_n \\  \frac{\sqrt{2}}{2}I_n & -\frac{\sqrt{2}}{2} I_n &
			 \\  \frac{1}{2} I_n & \frac{1}{2} I_n &
			\frac{\sqrt{2}}{2} I_n} \pmat{-\frac{\sqrt{2}}{2}I_n & 0 & \frac{\sqrt{2}}{2}I_n \\ -\frac{\sqrt{2}}{2}I_n & 0 & \frac{\sqrt{2}}{2}I_n \\ I_n & 0 &I_n} \pmat{I_n&&\\&V&\\&&I_n} \\
& = \pmat{I_n&&\\&V^T&\\&&I_n} \pmat{-\sqrt{2}I_n &  & \\  & 0 & \\  &  & \sqrt{2} I_n} \pmat{I_n&&\\&V&\\&&I_n} = \pmat{-\sqrt{2}I_n &  & \\  & 0 & \\  &  & \sqrt{2} I_n}\\
\end{align*}
Thus, $H$ and $\tilde Q$ can be simultaneously diagonalized through $G$. \qed
\end{proof}

\subsection{Miscellaneous}

\begin{definition}[{\bf Kurdyka-{\L}ojasiewicz (K{\L})
	property}]\label{KL}\rm 
	 A proper lower\\semi-continuous function $\Lscr: \bR^{N} \rightarrow \bR \cup \{+\infty\}$ has {the} K{\L} property at $\bar
	x \in \mbox{dom}(\partial \Lscr)$, if there exists $\eta \in (0,+\infty)$, a neighborhood $U$ of $\bar x$, and a continuous concave function
	$\phi: [0,\eta) \rightarrow \bR_+$ such that the following hold:  (i) $ \phi(0)= 0$, and $\phi$ is continuously differentiable on $(0,\eta)$. For all $s \in (0,\eta)$, $\phi'(s) > 0$; (ii) For all $x$ in $U \cap \{ x \in \bR^{N}: \Lscr(\bar x) < \Lscr(x) <
	\Lscr(\bar x) + \eta ]$, the Kurdyka-{\L}ojasiewicz (K{\L}) inequality
	holds:
$
\phi'(\Lscr(x) - \Lscr(\bar x)) {\rm dist} (0,\partial \Lscr(x)) \geq 1. 
$
\end{definition}

\begin{definition}[Semialgebraic function]\label{def: semiAb}
Recall that a semialgebraic set $S \subseteq \bR^n$ is defined as:
\begin{align*}
S \triangleq \{ x \in \bR^n: p_i(x) = 0, q_i(x) < 0, i = 1,\hdots,m \},
\end{align*}
where $p_i$ and $q_i$ are real polynomial functions. A function $F: \bR^n \rightarrow \bR \cup \{ +\infty \}$ is a semialgebraic function if and only if its graph $\{ (x;y) \in \bR^n \times \bR: y = F(x) \}$ is a semialgebraic subset in $\bR^{n+1}$. 
\end{definition}

\begin{remark}
Semialgebraic function has the following properties: (i). It satisfies K{\L} property with $\phi(s) = cs^{1-\theta}$ for some $\theta \in [0,1) \cap \mathbb{Q}$ and $c > 0$. (ii). Finite sums and products of semialgebraic functions are semialgebraic. See \cite[Section 4.3]{10.2307/40801236} for more details.
\end{remark}

\begin{lemma}[\bf Theorem 10~\cite{BurkeLagrange}]\label{NormalConeCharacterization}
\rm In $\bR^{n_1}$, let $C = \{ x \in X \mid F(x) \in D \}$,  for closed convex sets $X \subset \bR^{n_1}, D \subset \bR^{n_2}$, and a $\mathcal{C}^1$ mapping $F: \bR^{n_1} \rightarrow \bR^{n_2}$, written componentwise as $F(x) = (f_1(x); \hdots ; f_{n_2}(x))$. Suppose the following constraint qualification is satisfied at a point $\bar x \in C$:
\begin{align*}
\sum_{j=1}^{n_2} y_j \nabla f_j(\bar x) 
 + z = 0, y = (y_1; \hdots; y_{n_2}) \in \sN_D(F(\bar x)),  z \in \sN_X(\bar x)  \\ \implies y = {\bf 0}, z = 0.
\end{align*}
Then the normal cone $\sN_C(\bar x)$ consists of all vectors $v$ of the form
\begin{align*}
v = y_1 \nabla f_1(\bar x) + \hdots + y_{n_2} \nabla f_{n_2}(\bar x) + z \mbox{ with } y = (y_1;\hdots;y_{n_2}) \in \sN_D(F(\bar x)),\\
z \in \sN_X(\bar x).
\end{align*}
Note: When $X = \bR^{n_1}$, the normal cone $\sN_X(\bar x) = \{0\}$, so the $z$ terms here drop out. When $D$ is a singleton, $\sN_D(F(\bar x)) = \bR^{n_2}$.
\end{lemma}

\subsection{Convergence analysis of ADMM$_{\rm cf}$}\label{subsec: Anal.ADMMcf}
	\uvs{We now \vvs{analyze convergence} of (ADMM$_{\rm cf}$)  under the following update rule of $\rho_k$:}
\begin{align*}
& \mbox{If }  (\rho_k - \delta) \| y_{k+1} - y_k \| < \sqrt{2} \| \lambda_{k+1} - \lambda_k \| \mbox{ and } \rho_k \leq \rho_{\max},
\mbox{ then } \rho_{k+1} := \delta_{\rho} \rho_k; \\
& \mbox{else } \rho_{k+1}:= \rho_k.
\end{align*}
Further, we make the following assumption. 
\begin{assumption}\label{upperbound for rho}
The penalty parameter sequence $\{\rho_k\}$ in (ADMM$_{cf}$) \yxII{never exceeds the prescribed upper bound}, \ie $\rho_k \leq \rho_{\max}$, $\forall k \geq 0$.
\end{assumption}
	\begin{proposition} [{\bf Limit points of (ADMM$_{\rm cf}$) are KKT Points}]\label{limit KKT point}\rm
	{Consider problem \eqref{mpcc} with {$f_Q(x) = x^TMx + d^Tx$. Suppose
	(ADMM$_{\rm cf}$) {generates a sequence} $\{w_k
		\triangleq (x_k^+;x_k^-;\xi_k), y_k, {\lambda}_k\}$. \yxII{Assume that this sequence converges to a limit point denoted by $(\bar w, \bar y, {\bar \lambda})$.} Then, we may claim the following}: 
(a) The point $\bar w = (\bar x^+; \bar x^-; \bar \xi)$ satisfies first-order KKT conditions of \eqref{mpcc}.
(b) {If $f$ is convex,} then $\bar w$ is a local minimum of \eqref{mpcc}.
}
	\end{proposition}   
	\begin{proof}
	{\bf (a)}. 
\yxII{By the update rule of (ADMM$_{\rm cf}$), $\exists K > 0, s.t. \  \rho_k \equiv \rho$, $\forall k \geq K$.} Consequently, suppose $\rho_k \equiv \rho$ for all
$k$ without loss of generality. \uvs{(Otherwise, we \uvs{may} initialize using $w_K, y_K, \lambda_K, \rho_K$)} By (Update-1) at iteration $k+1$, we have
that
	$$w_{k+1} \in \mbox{arg}\hspace{-0.28in}\min_{(x^++x^-)^T\xi = 0}
	\left[f_Q(x^+ - x^-) - \gamma e^T\xi +
	\frac{\rho}{2}\|w - y_k+\lambda_k/ \rho \|^2\right].$$ Let $z_{k+1} \triangleq
	G^Tw_{k+1}$ and \uvs{$q_k \triangleq G^T \left( \pmat{d; -d;-\gamma e} + \lambda_k - \rho y_k \right),$}
	 where $G$ is defined in \eqref{G}. Denote $z_{k+1} \triangleq (z_{k+1,1};z_{k+1,2};z_{k+1,3}), z_{k+1,1}, z_{k+1,2}, z_{k+1,3} \in \bR^n$. From \eqref{closedform0}, we have $\exists u_k, v_k \in \bR^n$ such that
	 \begin{align} \notag
	z_{k+1,1} & = \begin{cases} 
			\frac{-(\|q_{k,1}\|+\|q_{k,3}\|)q_{k,1}}{2\rho\|q_{k,1}\|},
			& \hspace{-0.1in} \|q_{k,1}\| > 0 \\
			\frac{\|q_{k,3}\|}{2\rho}u_k,\| u_k \|=1, & \hspace{-0.1in} \|q_{k,1}\| =  0
			\end{cases}, \\
			\notag
	z_{k+1,3} & = \begin{cases} 
			\frac{-(\|q_{k,1}\|+\|q_{k,3}\|)q_{k,3}}{2\rho\|q_{k,3}\|},
			& \hspace{-0.1in}\|q_{k,3}\| > 0 \\
			\frac{\|q_{k,1}\|}{2\rho}v_k,\|v_k\|=1, & \hspace{-0.1in}\|q_{k,3}\| =  0
			\end{cases}, \\
			 \label{closedform}
	 (z_{k+1,2})_i  & = -(q_{k,2})_i/(\rho + 4s_i), \forall i = 1,\hdots,n,
	\end{align}
	where $q_k \triangleq (q_{k,1}; q_{k,2}; q_{k,3}), q_{k,1}, q_{k,2}, q_{k,3} \in \bR^n$. Since $w_{k+1} \rightarrow \bar w$, $y_k \rightarrow \bar y$, $\lambda_k
	 \rightarrow \bar \lambda$ as $k \to \infty$, $z_{k+1} \rightarrow \bar z = G^T
	 \bar w$ and as $k \rightarrow +\infty$, $q_k \rightarrow \bar q \triangleq G^T \left( \pmat{d; -d;-\gamma e} + \bar \lambda - \rho \bar y \right).$
We proceed to show that $\bar z$ and $\bar
	 q$ also {satisfy} the following: $\exists \bar u, \bar v \in 
	\bR^n$ \uvs{such that } 
	 \begin{align}
\label{limits property 1-Prop: limitKKT}
	\bar z_1 & = \begin{cases} 
			\frac{-(\| \bar q_1 \|+\| \bar q_3 \|) \bar q_1 }{2\rho\|\bar q_1 \|},
			& \hspace{-0.1in} \| \bar q_1 \| > 0 \\
			\frac{\| \bar q_3 \|}{2\rho} \bar u,\| \bar u \|=1, & \hspace{-0.1in} \| \bar q_1 \| =  0
			\end{cases}; 
	\bar z_3  = \begin{cases} 
			\frac{-(\| \bar q_1 \|+\| \bar q_3 \|) \bar q_3 }{2\rho\| \bar q_3 \|},
			& \| \bar q_3 \| > 0 \\
			\frac{\| \bar q_1 \|}{2\rho} \bar v,\| \bar v \|=1, & \| \bar q_3 \| =  0
			\end{cases}
\end{align}
\vspace{-0.2in}
\begin{align}
\label{limits property 3-Prop: limitKKT}
	(\bar z_2)_i &= -(\bar q_2 )_i/(\rho + 4s_i), \quad \forall i = 1,\hdots,n,
	\end{align}	
where $\bar z = (\bar z_1; \bar z_2; \bar z_3)$ and $\bar q = (\bar q_1; \bar
q_2; \bar q_3)$. \yxI{First, we prove that $\bar z_1$ and $\bar q$ satisfy \eqref{limits property 1-Prop: limitKKT}.}

\noindent (i) \uvs{Case 1. $\| \bar q_1
\| > 0$.} Then $\exists K$ s.t. $\forall k \geq K, \| q_{k,1} \| > 0$,
$$z_{k+1,1} = \frac{-(\|q_{k,1}\|+\|q_{k,3}\|)q_{k,1}}{2\rho\|q_{k,1}\|}.$$
Therefore, $$\bar z_1 = \lim_{k \rightarrow +\infty} z_{k+1,1} = \lim_{k
\rightarrow +\infty} \frac{-(\|q_{k,1}\|+\|q_{k,3}\|)q_{k,1}}{2\rho\|q_{k,1}\|}
= \frac{-(\| \bar q_1 \|+\| \bar q_3 \|) \bar q_1 }{2\rho\|\bar q_1 \|}.$$

\noindent (ii) Case 2.  
$\| \bar q_1 \| = 0$. Then 
\begin{align*}
& \| \bar z_1 \| = \lim_{k \rightarrow +\infty} \|
z_{k+1,1} \| = \| \bar q_3 \|/(2\rho) \implies \exists \bar u, \| \bar u \|
= 1, \ s.t.,\ \bar z_1 = \frac{\| \bar q_3 \|}{2\rho} \bar u.
\end{align*}
Note that expression of $\bar z_3$  can be proven similarly and \uvs{we omit proof of 
\eqref{limits property 3-Prop: limitKKT}.}  Therefore, 
{
	 \begin{align}\bar w = G \bar z \in
	 \mbox{arg}\hspace{-0.28in}\min_{(x^++x^-)^T\xi = 0} \left[ f_Q(x^+ - x^-) - \gamma e^T\xi +
	 \frac{\rho}{2}\|(x^+;x^-;\xi)- \bar y + \bar \lambda / \rho \|^2 \right].\label{upd1}
	 \end{align}
	In particular, it follows
	 that $(\bar x^+ + \bar x^-)^T \bar \xi = 0$.} Next, we consider
	whether such a limit point satisfies the first-order KKT conditions
		of \eqref{upd1} by examining two cases:

\noindent(i) Suppose $( \bar \xi;  \bar \xi ; \bar x^+ + \bar x^-)
		\neq 0$. Then the linear independence constraint qualification
		(LICQ) holds at $(\bar x^+, \bar x^-, \bar \xi)$. Consequently, there exists a scalar $\mu$ such that
	\begin{align}\label{KKT1}
	\pmat{\nabla f_Q(\bar x^+ - \bar x^-) \\ -\nabla f_Q(\bar x^+ - \bar
			x^-) \\ -\gamma e} + \rho(\bar w - \bar y + \bar \lambda / \rho) +
		\mu \pmat{\bar \xi \\ \bar \xi \\ \bar x^+ +  \bar x^-} =0.
	\end{align}
\noindent	(ii)  {Suppose $(\bar \xi, \bar
			\xi, \bar x^+ + \bar x^-) = 0$, implying $\bar \xi = 0$ and $x^+ + x^- = 0$. Since $(\bar x^+; \bar x^-;
				\bar \xi)$ is \us{a} global optimizer of \eqref{upd1},
				when we fix $\yyxx{\xi \equiv \bar{\xi} = 0}$, the following must hold:}
	\begin{align*}
&	\pmat{ \bar x^+ \\ \bar x^- } \in \mbox{arg}\hspace{-0.051in}\min_{x^+,x^-}  f_Q(x^+ - x^-) + \frac{\rho}{2} \left( \left\| x^+ - \bar y_1
				+ \frac{\bar \lambda_1 }{ \rho } \right\|^2 + \left\| x^- - \bar y_2 + \frac{ \bar \lambda_2 }{ \rho }
				\right\|^2 \right) \\	 
	& \implies 0 =  \pmat{\nabla f_Q(\bar x^+ - \bar x^-) \\ -\nabla
		 f_Q(\bar x^+ - \bar x^-) } + \rho \pmat{ \bar x^+  - \bar y_1 +
			 \bar \lambda_1 / \rho \\ \bar x^- - \bar y_2 + \bar \lambda_2 / \rho}. 
\end{align*}
{If we fix $x^+ \equiv \bar x^+, x^- \equiv \bar x^-$, then }
\begin{align*}
	\bar \xi & \in \mbox{arg}\hspace{-0.051in}\min_{\xi \in \Real^n} \left[
	-\gamma e^T\xi + \frac{\rho}{2} \| \xi - \bar y_3 + \bar \lambda_3 / \rho
			\|^2 \right] 
	 \implies 0  =  -\gamma e + \rho(\bar \xi - \bar y_3 + \bar \lambda_3/ \rho),
	\end{align*}
	where $\bar y = (\bar y_1;\bar y_2;\bar y_3),\bar y_1,\bar y_2,\bar
	y_3 \in \Real^n$, $\bar \lambda = (\bar \lambda_1;\bar \lambda_2;\bar
		\lambda_3),\bar  \lambda_1,\bar \lambda_2,\bar \lambda_3 \in
	\Real^n$. {Thus, \eqref{KKT1} holds for every $\mu \in \bR$}.  {Next, in (Update-2), we need to \uvs{compute $y_{k+1}$}, where 
$ 
y_{k+1}  =  \mbox{arg}\hspace{-0.03in}\min_{y \in Z_2} \quad g(y^+ - y^-) + \frac{\rho}{2} \| y - w_{k+1} - \lambda_k/\rho \|^2.
$
Note that the following first order condition holds because it is a convex program:
\begin{align*}
 ( [ \nabla g(y_{k+1,1} - y_{k+1,2}) ; -\nabla g(y_{k+1,1} - y_{k+1,2}) ; {\bf 0}_{n} ] + \rho(y_{k+1} - w_{k+1} - \lambda_k/ \rho ) )^T  \\
 (y -  y_{k+1}) \geq 0, \\
\forall y \in Z_2, y_{k+1} = (y_{k+1,1};y_{k+1,2};y_{k+1,3}), y_{k+1,i} \in \bR^n, i = 1,2,3.
\end{align*}
By continuity of $\nabla g(\bullet)$, since $w_{k+1} \rightarrow \bar w, y_{k+1} \rightarrow \bar y, \lambda_k \rightarrow \bar \lambda$, we have that 
\begin{align*}
( [ \nabla g(\bar y_{1} - \bar y_{2}) ; -\nabla g(\bar y_{1} - \bar y_{2}) ; {\bf 0}_{n \times 1} ] + \rho(\bar y - \bar w - \bar \lambda / \rho ) )^T(y -  \bar y) \geq 0,
\forall y \in Z_2, \\
\implies \bar y \in \mbox{argmin}_{y \in Z_2} \left[ g(y^+ - y^-) + \frac{\rho}{2}\| y - \bar w - \bar \lambda / \rho \|^2 \right].
\end{align*}
}
Thus, by the definition of $Z_2$ in \eqref{def-Z1Z2}, $\exists \beta_1,\beta_2,\beta_3,\beta_4 \in \Real^n, \pi \in \Real^m$ such that
	\begin{align}\label{KKT2}
	& \pmat{ \nabla g(\bar y_1 - \bar y_2) \\ -\nabla g(\bar y_1 - \bar y_2) \\ {\bf 0}_{n \times 1} } + \rho(\bar y - \bar w - \bar \lambda / \rho) + \pmat{ -\beta_1  - A^T\pi \\ -\beta_2 + A^T\pi \\  \beta_4 -  \beta_3} = 0,  \\
	& \notag 0 \leq \beta_i \perp \bar y_i \geq 0,   i = 1, 2, 3, \ 
	0 \leq e - \bar y_3  \perp \beta_4 \geq 0, \ 
	0 \leq A(\bar y_1- \bar y_2) - b  \perp \pi \geq 0.
	\end{align}
	Note that
	$\lambda_k \rightarrow \bar \lambda$ implies that $ w_{k+1} - y_{k+1} = (\lambda_{k+1} - \lambda_k) / \rho
	\rightarrow 0$, which further implies that $\bar w = \bar y$. By
	combining \eqref{KKT1} and \eqref{KKT2}, letting $\bar y = \bar w$,
	and by adding $(\bar x^+ + \bar x^-)^T \bar \xi = 0$, we have exactly the KKT conditions (\eqref{KKT-F}) at $(\bar x^+; \bar x^-; \bar \xi)$ for \eqref{mpcc}.

 {{\bf (b).} {If $f$ is convex, then by Theorem~\ref{SOC}}, $\bar w = (\bar
		x^+; \bar x^-; \bar \xi)$ is a local minimum of \eqref{mpcc}.} \qed
\end{proof}

Next, suppose $h(w) \triangleq f_Q(x^+-x^-) + \gamma \sum_{i=1}^n (1-\xi_i)$, $p(y) \triangleq g(y^+ - y^-)$. Define:
\begin{align} \notag
{\mathcal L}(w,y,\lambda,\rho) & \triangleq 
 h(w) + p(y) + \lambda^T(w-y) + {\rho \over 2} \|w-y\|^2 \\
\label{def-scrH}
\notag
\scrH(w,y,\lambda) & \triangleq h(w) + \bk1_{Z_1}(w) + p(y) + \bk1_{Z_2}(y) + \lambda^T(w - y)  \\
\hspace{0.01in}\scrH_\rho (w,y,\lambda) & \triangleq h(w) + \bk1_{Z_1}(w) + p(y) + \bk1_{Z_2}(y) \\
\notag
& \quad + \lambda^T(w - y) + \frac{\rho}{2}\| w-y \|^2.
\end{align}
Then the updates of (ADMM$_{\rm cf}$) can be rewritten as follows:
\begin{align}
\label{update1}
	w_{k+1} &:= \mbox{arg}\hspace{-0.05in}\min_{w \in Z_1} \ {\mathcal L}(w,y_k,\lambda_k,\rho_k) = \mbox{arg}\hspace{-0.02in}\min_w \ \scrH_{\rho_k}(w,y_k,\lambda_k), \\
\label{update2}
	y_{k+1} &:= \mbox{arg}\hspace{-0.05in}\min_{y \in Z_2} \ {\mathcal L}(w_{k+1},y,\lambda_k,\rho_k) = \mbox{arg}\hspace{-0.02in}\min_y \ \scrH_{\rho_k}(w_{k+1},y,\lambda_k), \\
	\notag
	\lambda_{k+1} & := \lambda_k + \rho_k (w_{k+1} - y_{k+1}).
\end{align}
\uvs{Deriving convergence statements} of Algorithm~\ref{admm} necessitates the following lemma.
\begin{lemma}\label{lemma:LagConv} \rm 
Consider the sequence
$\{w_k,y_k,\lambda_k,\rho_k\}$ generated by (ADMM$_{\rm cf}$).\\ Then for all $k \geq 0$, where $\Delta \scrL_k \triangleq \scrL(w_{k+1},y_{k+1},\lambda_{k+1},\rho_{k+1}) - \scrL(w_k, y_k, \lambda_k, \rho_k),$
\begin{align} 
\Delta \scrL_k   \leq \left( \frac{1}{\rho_k} + \frac{\rho_{k+1} - \rho_k}{2 \rho_k^2} \right) \| \lambda_{k+1} - \lambda_k \|^2  
 - \frac{\rho_k}{2} \| y_{k+1} - y_k \|^2.\label{LagConv_ineq0}
\end{align}
\end{lemma}
\begin{proof} From the definition of augmented Lagrangian function, 
\begin{align}
\notag
& \scrL(w_{k+1},y_{k+1},\lambda_k,\rho_k) - \scrL(w_{k+1},y_k,\lambda_k,\rho_k) \\
\notag
& \leq - \nabla_y \Lscr(w_{k+1},y_{k+1},\lambda_k,\rho_k)^T ( y_k - y_{k+1} ) - \frac{\rho_k}{2} \| y_{k+1} - y_k \|^2 \\
\label{LagConv-ineq1}
& \leq - \frac{\rho_k}{2} \| y_{k+1} - y_k \|^2,
\end{align}
where the first inequality \vvs{follows} \vvs{since $\scrL(w_{k+1},y,\lambda_k,\rho_k)$ is $\rho_k$-strongly convex} {in $y$} {with constant} $\rho_k$, while the second inequality may be derived from the optimality {conditions} of update \eqref{update2} {whereby} $\nabla_y \Lscr(w_{k+1},y_{k+1},\lambda_k,\rho_k)^T ( y - y_{k+1} ) \geq 0, \forall y \in Z_2$. {Since $w_{k+1}$ is a minimizer associated with \eqref{update1}}, we have that 
\begin{align}
\label{LagConv-ineq2}
\scrL(w_{k+1},y_k,\lambda_k,\rho_k) - \scrL(w_k,y_k,\lambda_k,\rho_k) 
& \leq 0. 
\end{align}
\yxIII{By invoking the definition of the augmented Lagrangian function, and utilizing the update rule for $\lambda_{k+1}$, i.e. $\lambda_{k+1} = \lambda_k + \rho_k(w_{k+1}-y_{k+1})$,}
\begin{align}
\notag
& \scrL(w_{k+1},y_{k+1},\lambda_{k+1},\rho_k) - \scrL(w_{k+1},y_{k+1},\lambda_k,\rho_k) \\
\notag
& = (\lambda_{k+1} - \lambda_k)^T ( w_{k+1} - y_{k+1} ) \\
\label{LagConv-ineq3}
& = \| \lambda_{k+1} - \lambda_k \|^2 / \rho_k \\
\notag
& \scrL(w_{k+1},y_{k+1},\lambda_{k+1},\rho_{k+1}) - \scrL(w_{k+1},y_{k+1},\lambda_{k+1},\rho_k)  \\
\notag
& = \frac{\rho_{k+1} - \rho_k}{2} \| w_{k+1} - y_{k+1} \|^2 \\
\label{LagConv-ineq4}
& = \frac{\rho_{k+1} - \rho_k}{2\rho_k^2} \| \lambda_{k+1} - \lambda_k \|^2.
\end{align}
By adding \eqref{LagConv-ineq1}, \eqref{LagConv-ineq2}, \eqref{LagConv-ineq3}, and \eqref{LagConv-ineq4}, we obtain the {required} result. \qed
\end{proof}

Proving convergence requires the
	Kurdyka-{\L}ojasiewicz property and a requirement on the multiplier sequence. 


\begin{assumption}\label{upperbound for multiplier}
$ \liminf_{k \to +\infty} \| \lambda_k \| < +\infty.$
\end{assumption}
\begin{theorem} \label{thm: convergence}
\rm Consider the sequence
$\{w_k,y_k,\lambda_k,\rho_k\}$ generated by (ADMM$_{\rm cf}$). 
Suppose {that Assumption~\ref{upperbound for rho},~\ref{upperbound for multiplier} hold and $\{ y_k \}$ is bounded}.
Then the following hold:

\noindent(i) {$\exists K_0 \in \mathbb{N}, s.t. \rho_k \equiv \rho, \forall k \geq K_0$. $\| \lambda_{k+1} - \lambda_k \| \leq C \| y_{k+1} - y_k \|, \forall k \geq K_0$, $C \triangleq \frac{\rho-\delta}{\sqrt{2}}$.}\\
(ii) $\{ \scrL(w_k,y_k,\lambda_k,\rho_k) \}_{k \geq K_0}$ is a non-increasing sequence satisfying
\begin{align}
\notag
& \scrL(w_{k+1},y_{k+1},\lambda_{k+1},\rho) - \scrL(w_k, y_k, \lambda_k, \rho) \\
\notag
& \leq \left( C^2 / \rho - \rho / 2 \right) \| y_{k+1} - y_k \|^2 \\
\label{LagDesc_ineq0}
& \leq - ( \delta / 2 ) \| y_{k+1} - y_k \|^2, \quad \forall k \geq K_0.
\end{align}

\noindent(iii)  {$\{ \scrL(w_k,y_k,\lambda_k,\rho_k) \}_{k \geq K_0}$ is \uvs{bounded from below}. Furthermore, $y_{k+1} - y_k \to 0$, $y_k - w_k \to 0$ as $k \to \infty$ and $\{w_k\}$ is a bounded sequence. Therefore, $\{ (w_k; y_k; \lambda_k) \}$ has a convergent subsequence \vvs{with limit point given by} $z^* \triangleq (w^*; y^*; \lambda^*)$.}

\noindent(iv) {Suppose $\scrH_{\rho}$ satisfies the K{\L} property at $z^*$.} Then $\sum_{k=0}^\infty \| y_{k+1} - y_{k} \| < \infty$. 

\noindent(v) {Suppose $\scrH_{\rho}$ satisfies the K{\L} property at $z^*$.} 
Then $(w_k; y_k; \lambda_k)$ converges to $z^*$, {and $z^*$ satisfying the} first-order KKT conditions of \eqref{mpcc}.  
\end{theorem}
\begin{proof}
\noindent{\bf (i)}. {By Assumption~\ref{upperbound for rho}, $\rho_k$ remains unchanged for sufficiently large $k$, so we denote $\rho, K_0$ such that $\rho_k \equiv \rho \leq \rho_{\max}, \forall k \geq K_0.$ Moreover, by the update rule in step 2 of Alg.~\ref{admm}, $\| \lambda_{k+1} - \lambda_k \| \leq \frac{\rho - \delta}{\sqrt{2}} \| y_{k+1} - y_k \|, \forall k \geq K_0.$ }

\noindent{\bf (ii)}. From Lemma~\ref{lemma:LagConv} and (i), for $\forall k \geq K_0, \rho_k = \rho$, and
\begin{align} \label{LagConv_ineq2}
\notag
&  \scrL(w_{k+1},y_{k+1},\lambda_{k+1},\rho_{k+1})  - \scrL(w_k, y_k, \lambda_k, \rho_k) \\
\notag
& \leq \frac{1}{\rho} \| \lambda_{k+1} - \lambda_k \|^2 - \frac{\rho}{2} \| y_{k+1} - y_k \|^2 \\
\notag 
& \leq \frac{(\rho - \delta)^2}{2 \rho} \| y_{k+1} - y_k \|^2 - \frac{\rho}{2} \| y_{k+1} - y_k \|^2 \\
\notag
& = \frac{\delta^2-2\delta\rho}{2\rho} \| y_{k+1} - y_k \|^2 \\
\notag
& \overset{(\delta < \rho)}{\leq} \left( -\delta + \frac{\delta}{2} \right) \| y_{k+1} - y_k \|^2 \\
& = -\frac{\delta}{2}  \| y_{k+1} - y_k \|^2.
\end{align}
Thus, $\{ \scrL(w_k,y_k,\lambda_k,\rho_k) \}_{k \geq K_0}$ is a non-increasing sequence. 

\noindent{\bf (iii)}. We first show that $\inf_{k \geq 0} \{ h(w_k) + p(y_k) + \frac{\rho}{2}\| w_k - y_k \|^2 \}$ is finite. Note that
\begin{align*}
& \quad h(w) - n\gamma + p(y) + \frac{\rho}{2}\| w - y \|^2 \\
& = f_Q(x^+ - x^-) - \gamma e^T\xi + p(y) + \frac{\rho}{2}\| w - y \|^2 \\
& = (x^+ - x^-)^TM(x^+ - x^-) + d^T(x^+ - x^-) - \gamma e^T\xi + p(y) + \frac{\rho}{2}\| w - y \|^2 \\
& = w^T H w + [(d;-d;-\gamma e) - \rho y ]^Tw + \frac{\rho}{2}\| y \|^2 + p(y) \\
& = \left\| w + \frac{1}{2} H^{-1} [(d;-d;-\gamma e) - \rho y ] \right\|_H^2
  + \frac{\rho}{2} \| y \|^2 + p(y) \\
 & - \frac{1}{4} \| (d;-d;-\gamma e) - \rho y \|_{H^{-1}}^2\\
& \geq \frac{\rho}{2} \| y \|^2 + p(y) - \frac{1}{4} \| (d;-d;-\gamma e) - \rho y \|_{H^{-1}}^2, 
\end{align*}
where $H \triangleq \pmat{M+\frac{\rho}{2}I & -M & \\ -M & M+\frac{\rho}{2}I & \\ & & \frac{\rho}{2}I }$ and $H \succ 0$ (Note that $\rho_0 I + 4M \succ 0$ leading to $\rho I + 4M
\succ 0$, further implying $H \succ 0$). 
Since $\{ y_k \}$ is bounded by assumption, and $p(y) = g(y^+ - y^-)$ is smooth,
\begin{align*}
& \inf_{k \geq 0} \left[ h(w_k) - n\gamma + p(y_k) + ( \rho / 2 ) \| w_k - y_k \|^2 \right] \\ 
& \geq \inf_{k \geq 0} [ ( \rho / 2 ) \| y_k \|^2 + p(y_k) - ( 1 / 4 ) \| (d;-d;-\gamma e) - \rho y_k \|_{H^{-1}}^2] > -\infty.
\end{align*}
If $ \bar L \triangleq \inf_{k \geq 0} \{ h(w_k) + p(y_k) + \frac{\rho}{2}\| w_k - y_k \|^2 \}$, then 
\begin{align*}
\scrL(w_k,y_k,\lambda_k,\rho_k) & \geq \bar L + \lambda_k^T(w_k - y_k)  \\
& = \bar L + \lambda_k^T(\lambda_k - \lambda_{k-1})/\rho \\
& = \bar L + \frac{1}{2\rho}( \| \lambda_k \|^2 - \| \lambda_{k-1} \|^2 + \| \lambda_k - \lambda_{k-1} \|^2 ),
\end{align*}
implying that 
\begin{align*} 
 \sum_{k=K_0}^N ( \scrL(w_k,y_k,\lambda_k,\rho_k) - \bar L ) 
\geq \frac{ \| \lambda_N \|^2 - \| \lambda_{K_0-1} \|^2 }{2\rho} \geq \frac{-\| \lambda_{K_0-1} \|^2}{2\rho}  {>} -\infty,
\end{align*} 
\yxII{for all $N \geq K_0$.} Since $\{ \scrL(w_k,y_k,\lambda_k,\rho_k) - \bar L \}_{k \geq K_0}$ is a non-increasing sequence from (ii), it's nonnegative. Otherwise, $
\lim_{N \rightarrow +\infty} \sum_{k=K_0}^N ( \scrL(w_k,y_k,\lambda_k,\rho_k) - \bar L ) = -\infty.
$
This is a contradiction. Therefore, $\{ \scrL(w_k,y_k,\lambda_k,\rho_k) \}_{k \geq K_0}$ is \uvs{bounded from below}. Consequently, $h_k  
\triangleq \scrH_\rho(w_k,y_k,\lambda_k)$ is a convergent sequence because $\scrH_\rho(w_k,y_k,\lambda_k) =  \scrL(w_k,y_k,\lambda_k,\rho) = \scrL(w_k,y_k,\lambda_k,\rho_k), \forall k \geq K_0.$
Without loss of generality, suppose $h_k \rightarrow 0$. Then, by summing up
\eqref{LagConv_ineq2} for $k \geq K_0$, we have 
 $\sum_{k=K_0}^\infty \| y_{k+1} - y_k \|^2 \leq \frac{h_{K_0}}{\delta/2} < \infty.$
It follows that $y_{k+1}-y_k \to 0$ as $k \to \infty$. From (i),   
we also have $\|\lambda_{k+1}-\lambda_k\| \rightarrow 0$ as $k \to \infty$. In other words, $\rho\|w_k - y_k\| \to 0$ as $k \to \infty$. But $\{y_k\}$ is a bounded sequence, implying that $\{w_k\}$ is a bounded sequence. By Assumption~\ref{upperbound for multiplier}, there exists a convergent subsequence of $\{ \lambda_k \}$. Therefore, there exists a subsequence of $\{w_k,y_k,\lambda_k\}$ converging to a point denoted by $\{w^*,y^*,\lambda^*\} \triangleq z^*$. \\
 {\bf (iv).} Next we prove $\|y_{k+1} - y_k\|$ is summable by using
{the K{\L}} inequality. {By assumption}, $\scrH_\rho$
{admits the} K{\L} property at $z^*$ and suppose the concave function $\psi$,
a {neighborhood} $U$, and \us{a scalar} $\eta > 0$ are associated with the
{K{\L}} property. {Further, suppose $B(z^*,r) \subseteq U$ and denote $z_k =
(w_k; y_k; \lambda_k)$. We know that $h_k \to 0$. If for some $k_0 \geq K_0$, $h_{k_0} = 0$, then by \eqref{LagDesc_ineq0}, $y_k = y_{k+1}, \forall k \geq k_0$, the proof is complete. Therefore, let $h_k > 0, \forall k \geq K_0$. Since a subsequence of $\{ z_k \}$ converges to $z^*$, and $h_k \to 0$, $\exists K \geq K_0 + 1$ such that:}
\begin{align}\label{Initial Ineq}
\notag
& \left( \frac{2C}{\rho} + C + 2 \right)\sqrt{\frac{h_{K-1}}{ \rho/2 - C^2/\rho}} + \left( \frac{C}{\rho} + \frac{C}{2} + 1 \right) \times \\ 
&  \left[ \frac{\psi(h_K)}{C_0} + \left[ \frac{\psi(h_K)}{C_0} \left( \frac{\psi(h_K)}{C_0} + 4\sqrt{\frac{h_{K-1}}{\rho/2 - C^2/\rho }} \right) \right]^{1/2} \right] 
 + \| z_K - z^* \| < r,
\end{align}
$ \uvs{\mbox{ where } h_K < \eta \mbox{ and } C_0 = \frac{\frac{\rho}{2} - \frac{C^2}{\rho}}{\frac{C}{\rho} + 2C + \rho}}. $ 
Then we {inductively prove} \eqref{Induction} for $k \geq K+1$: 
\begin{align}\label{Induction}
z_k \in B(z^*, r), \quad \| y_{k-1} - y_{k-2} \| > 0,  \quad \frac{ C_0 \| y_{k} - y_{k-1} \|^2}{ \| y_{k-1} - y_{k-2} \| } \leq   \psi(h_{k-1}) - \psi(h_k).
\end{align}
\vvs{We first prove} two useful inequalities. From \eqref{LagDesc_ineq0}, we have \eqref{Conv-Ineq-1} for $k \geq K-1$:
\begin{align}\label{Conv-Ineq-1}
\notag
& \|y_{k+1} - y_k \|^2 \leq \frac{h_k - h_{k+1} }{ {\rho / 2} - { C^2 / \rho} } \leq \frac{h_{K-1}}{ {\rho / 2} - {C^2 / \rho} }  \\
\implies & \| y_{k+1} - y_k  \| \leq \sqrt{ \frac{h_{K-1}}{ {\rho / 2} - {C^2 / \rho} }  } .
\end{align}
Furthermore, {$\|z_{k+1} - z_k\|$ may be bounded as follows for $\forall k \geq K$.}
\begin{align}
\notag
\| z_{k+1} - z_k \| & = \sqrt{\| w_{k+1} - w_k \|^2 + \| y_{k+1} - y_k \|^2 + \| \lambda_{k+1} - \lambda_k \|^2 } \\
\notag
& \leq \| w_{k+1} - w_k \| +  \| y_{k+1} - y_k \| + \| \lambda_{k+1} - \lambda_k \| \\
\notag
& \leq  \| w_{k+1} - y_{k+1} \| + \| y_k - w_k \| + 2 \| y_{k+1} - y_k \| + \| \lambda_{k+1} - \lambda_k \| \\
\notag
& \leq ( 1/\rho + 1 ) \| \lambda_{k+1} - \lambda_k \| + \| \lambda_k - \lambda_{k-1} \|/\rho + 2 \| y_{k+1} - y_k \| \\
\label{Conv-Ineq0}
& \overset{(i)}{\leq} C \| y_k - y_{k-1} \| / \rho + (C/\rho + C + 2) \| y_{k+1} - y_k \|.
\end{align}
We utilize these inequalities to show \eqref{Induction} by induction.

\noindent $\pmb{ K+1 }$:
Through \eqref{Initial Ineq}, \eqref{Conv-Ineq-1}, \eqref{Conv-Ineq0}, the following holds
\begin{align*}
& \| z_{K+1} - z^* \|  \leq \| z_{K+1} - z_K \| + \| z_K - z^* \| \\
 & \overset{\eqref{Conv-Ineq0}}{\leq}  C \| y_K - y_{K-1} \| / \rho + (C/\rho + C + 2) \| y_{K+1} - y_K \|  + \| z_K - z^* \| \\
& 
 \overset{\eqref{Conv-Ineq-1}}{\leq} \left( {2C \over \rho} + C + 2 \right) \sqrt{ \frac{h_{K-1}}{ {\rho / 2} - {C^2 / \rho} } }  + \| z_K - z^* \| 
\overset{\eqref{Initial Ineq}}{<} r,
\end{align*}
\vvs{implying $z_{K+1} \in B(z^*,r).$} From the optimality conditions of \eqref{update1} and \eqref{update2},
\begin{align}
\notag
 0 & \in \nabla_w h(w_{K}) + \lambda_{K-1} + \rho(w_{K} - y_{K-1}) + \partial \bk1_{Z_1}(w_{K}) \\ 
\label{formula1-Conv}
 \implies
\Delta \lambda_K - \rho \Delta y_K & \in 
 \nabla_w h(w_{K}) + \lambda_{K} + \rho(w_{K} - y_{K}) + \partial \bk1_{Z_1}(w_{K}) \\
 \notag
 0 & \in \nabla_y p(y_{K}) -  \lambda_{K-1} + \rho(y_{K} - w_{K}) + \partial \bk1_{Z_2}(y_{K}) \\
 \label{formula2-Conv}
 \implies  - \Delta \lambda_{K}
 & \in \nabla_y p(y_{K}) - \lambda_{K} + \rho(y_{K} - w_{K}) + \partial \bk1_{Z_2}(y_{K}),
\end{align}
where $\Delta \lambda_K \triangleq \lambda_K - \lambda_{K-1}, \Delta y_K \triangleq y_K - y_{K-1}$. So \eqref{formula1-Conv}, \eqref{formula2-Conv} and the fact that
\begin{align}
   \notag 
\partial \scrH_\rho(z_{K}) & = ( \nabla_w h(w_{K}) + \lambda_{K} + \rho(w_{K} - y_{K}) + \partial \bk1_{Z_1}(w_{K}) )\\
\notag
& \quad \times 
(\nabla_y p(y_{K}) - \lambda_{K} + \rho(y_{K} - w_{K}) + \partial \bk1_{Z_2}(y_{K}) ) \times ( w_{K} - y_{K} )
\end{align}
imply that $ \partial \scrH_\rho(z_{K}) \ni
\pmat{ \Delta \lambda_{K} - \rho \Delta  y_K; - \Delta  \lambda_{K}; w_{K} - y_{K} }$ and 
\begin{align}
\notag
& \mbox{dist}(0, \partial \scrH_\rho(z_{K}) )  
 \leq \sqrt{ \| \Delta \lambda_{K} - \rho \Delta y_{K} \|^2 + \|  \Delta \lambda_{K} \|^2 + \| w_{K} - y_{K} \|^2 } \\
\notag 
& \leq \| \Delta \lambda_{K} - \rho \Delta y_{K} \| + \| \Delta \lambda_{K} \| + \| w_{K} - y_{K} \| \\ 
\label{Conv_ineq1}
& \leq (1/\rho + 2) \| \Delta \lambda_{K} \|  +  \rho \| \Delta y_{K} \| 
 \overset{(i)}{\leq} (C/\rho + 2C + \rho)  \| \Delta y_{K} \| \\
\notag
\implies  & - \Delta \psi(h_{K+1})
\geq \psi'(h_K) ( h_K - h_{K+1} )
\overset{\eqref{LagDesc_ineq0}}{\geq} \psi'(h_{K}) \left({ \rho \over 2} - {C^2 \over \rho} \right) \| \Delta  y_{K+1}  \|^2 \\
\label{Conv_Ineq3}
& \geq \frac{( { \rho \over 2} - {C^2 \over \rho} ) \| \Delta y_{K+1} \|^2}{\mbox{dist}(0,\partial \scrH_\rho(z_{K}))} \overset{\eqref{Conv_ineq1}}{\geq} \frac{( {\rho \over 2} - {C^2 \over \rho} ) \| \Delta y_{K+1} \|^2}{({C \over \rho} + 2C + \rho)  \| \Delta y_{K} \|} 
 = \frac{ C_0 \| \Delta y_{K+1} \|^2}{ \|  \Delta y_{K} \|},
\end{align}
\yxIII{where $\Delta \psi(h_{K+1}) \triangleq \psi(h_{K+1}) - \psi(h_K)$, the first inequality of \eqref{Conv_Ineq3} follows from the concavity of $\psi$, and the third inequality is due to the K{\L} inequality $\psi'(h_K)$ ${\rm dist}(0,\partial \scrH_\rho(z_K) )$ $\geq 1$ because $\| z_K - z^* \| < r$. Moreover, the K{\L} inequality indicates that ${\rm dist}(0,\partial \scrH_\rho(z_K) ) > 0$, thus $\| \Delta y_K \| > 0 $ by \eqref{Conv_ineq1}.}
\yxIII{Therefore, the inductive hypothesis holds for $K+1$. Assume that it holds for $ K+2, \hdots, k$ and consider $k+1.$ }

\noindent $\pmb{k +1}$: We begin by showing that:
\begin{align}\label{Conv_Ineq4}
\sum_{i=K}^{k-1} \| y_{i+1} - y_i \| \leq \frac{1}{2}  \left[ \frac{\psi(h_K)}{C_0} + \left[ \frac{\psi(h_K)}{C_0} \left( \frac{\psi(h_K)}{C_0} + 4\sqrt{\frac{h_{K-1}}{\rho/2 - C^2/\rho }} \right) \right]^{1/2} \right].
\end{align}
Combining inductive hypothesis \eqref{Induction} for $K+1, \hdots, k$, we have that
\begin{align*}
& \psi( h_K ) - \psi(h_k ) \geq C_0 \sum_{i = K}^{k-1} \frac{\| y_{i+1} - y_i \|^2}{ \| y_i - y_{i-1} \|} \\
 \implies & \frac{\psi( h_K ) - \psi( h_k ) }{C_0}  \sum_{i = K}^{k-1} \| y_i - y_{i-1} \| \geq \left( \sum_{i = K}^{k-1} \frac{\| y_{i+1} - y_i \|^2 }{ \| y_i - y_{i-1} \|} \right) \left( \sum_{i = K}^{k-1} \| y_i - y_{i-1} \| \right) \\
& \overset{(*)}{\geq} \left( \sum_{i = K}^{k-1} \| y_{i+1} - y_i \| \right)^2 \\
\implies & \left( \sum_{i = K}^{k-1} \| y_{i+1} - y_i \| \right)^2 \leq \frac{\psi( h_K )}{C_0} \sum_{i=K-1}^{k-2} \| y_{i+1} - y_i \| \\
& \leq \frac{\psi( h_K )}{C_0} \sum_{i=K-1}^{k-1} \| y_{i+1} - y_i \| \\
& = \frac{\psi( h_K )}{C_0} \left( \sum_{i=K}^{k-1} \| \Delta y_{i+1} \| + \| \Delta y_K \| \right) \\
& \overset{\eqref{Conv-Ineq-1}}{\leq} \frac{\psi( h_K )}{C_0} \left( \sum_{i=K}^{k-1} \|\Delta y_{i+1} \| + \sqrt{\frac{h_{K-1}}{\rho/2 - C^2/\rho}} \right), 
\end{align*}
where $(*)$ holds because of \uvs{H\"{o}lder's} inequality. 
If $x \triangleq  \sum_{i=K}^{k-1} \| y_{i+1} - y_i \|$, $C_1 \triangleq {\psi(h_K) \over C_0}$, and $C_2 \triangleq \frac{\psi( h_K )}{C_0} \sqrt{\frac{h_{K-1}}{\rho/2 - C^2/\rho}}$, then the above inequality is equivalent to  $x^2 - C_1x - C_2 \leq 0 \implies x \leq \frac{1}{2}\left( C_1 + \sqrt{C_1^2 + 4C_2} \right)$. This is exactly \eqref{Conv_Ineq4}.  Therefore,
\begin{align*}
&  \| z_{k+1} - z_K \| 
\leq \sum_{i=K}^k \| z_{i+1} - z_i \| \\
& \overset{\eqref{Conv-Ineq0}}{\leq} \sum_{i = K}^k  \left(\frac{ \| y_i - y_{i-1} \|}{\rho / C}+ \left( {C \over \rho} + C + 2 \right) \| y_{i+1} - y_i \|\right) \\
& =\frac{ C\| y_K - y_{K-1} \|}{\rho} + \sum_{i=K}^{k-1}\left( {2C \over \rho} + C + 2 \right)\| y_{i+1} - y_i \| \\
& + \left({C \over \rho} + C + 2\right) \| y_{k+1} - y_k \| \\
& \overset{\eqref{Conv-Ineq-1}\eqref{Conv_Ineq4}}{\leq} \left( \frac{2C}{\rho} + C + 2 \right) \sqrt{\frac{h_{K-1}}{\rho/2 - C^2 / \rho}} + 
\left( {C \over \rho} + \frac{C}{2} + 1 \right) \left( C_1 + \sqrt{C_1^2 + 4C_2} \right) \\
& \implies \| z_{k+1} - z^* \| \leq \| z_{k+1} - z_K \| + \| z_K - z^* \| \overset{\eqref{Initial Ineq}}{<} r. 
\end{align*}
Thus, $z_{k+1} \in B(z^*,r)$.
Since $z_k \in B(z^*,r)$, the K{\L} inequality holds for $z_{k}$, and \uvs{in a fashion similar to} \eqref{Conv_Ineq3}, we obtain that 
$
\| y_{k} - y_{k-1} \| > 0 \mbox{ and } \frac{C_0 \| y_{k+1} - y_k \|^2 }{\| y_k - y_{k-1} \|} \leq \psi(h_k) - \psi(h_{k+1}),
$
completing the proof of the inductive hypothesis. By the hypothesis, \eqref{Conv_Ineq4} holds for $k \geq K+1$. This indicates that
$
\sum_{i = K}^{+\infty} \| y_{i+1} - y_i \| < +\infty$, implying that $\sum_{i = 0}^{+\infty} \| y_{i+1} - y_i \| < +\infty.$
 
\noindent{\bf (v).} \uvs{From (iv) and by recalling that $ \| \lambda_{k+1} - \lambda_k \| \leq C \| y_{k+1} - y_k \|$ for $k$ sufficiently large, we have that $\{y_k\}$ and $\{\lambda_k\}$ are Cauchy sequences,  convergent to $y^*$ and $\lambda^*$, respectively}. Since  $w_k - y_k \rightarrow 0$, $\{ w_k \}$ also converges to $w^* = y^*$. By Proposition~\ref{limit KKT point}, $(w^*,y^*,\lambda^*)$ is a KKT point. \qed
\end{proof}

\begin{remark}
(i). To derive convergence of the sequence, we leverage the K{\L} property of $\scrH_\rho$. When $p(y)$ is semialgebraic \cite[Sec.~4.3]{10.2307/40801236}, $\scrH_\rho$ is a sum of semialgebraic functions \uvs{and is therefore} semialgebraic. Then the result follows from 
\cite[Sec.~4.3]{10.2307/40801236} whereby a semialgebraic function $\Lscr$ satisfies the K{\L} property at
every point in  $\mbox{dom}(\partial \Lscr)$. \\
(ii). If we cannot \uvs{invoke} the K{\L} property to show convergence of $\{ (w_k,y_k,\lambda_k) \}$, we may \uvs{merely} conclude that any cluster point of $\{ (w_k,y_k,\lambda_k) \}$ satisfies first order KKT conditions of \eqref{mpcc}. The proof is similar to Proposition~\ref{limit KKT point} thus omitted. \\
(iii). Boundedness of $\{ y_k \}$ holds by assuming compactness of $Z_2$ \vvs{(obtainable by adding constraints $x^+ \leq ub^+, x^- \leq ub^-$)}.
\end{remark} 

\end{document}